\theoremstyle{definition}
\newtheorem{defi}{Definition}[section]
\newtheorem{lem}[defi]{Lemma}
\newtheorem{teo}[defi]{Theorem}
\newtheorem{cor}[defi]{Corollary}
\newtheorem{prop}[defi]{Proposition}
\newtheorem{ex}[defi]{Example}
\theoremstyle{remark}
\newtheorem{obs}[defi]{Remark}
\DeclareMathOperator{\rad}{\operatorname{rad}}
\DeclareMathOperator{\md}{\operatorname{mod}}
\DeclareMathOperator{\ind}{\operatorname{ind}}
\DeclareMathOperator{\ff}{\mathcal{F}}
\DeclareMathOperator{\Dp}{\operatorname{dp}} 
\DeclareMathOperator{\Id}{\operatorname{Id}} 
\DeclareMathOperator{\Hom}{\operatorname{Hom}} 
\DeclareMathOperator{\Ker}{\operatorname{Ker}}
\DeclareMathOperator{\Ga}{\Gamma}
\DeclareMathOperator{\lra}{\longrightarrow}
\DeclareMathOperator{\Rr}{\mathfrak{R}}
\title{Mesh-comparable components of the Auslander-Reiten quiver}
\author{Viktor Chust \thanks{corresponding author: viktorchust.math@gmail.com}, Flávio U. Coelho \\ Inst. of Mathematics and Statistics, Univ. of São Paulo (IME-USP)}
\date{}
\begin{document}

\maketitle

\begin{abstract}
The idea of using Riedtmann's well-behaved functors to study compositions of irreducible morphisms has been explored in a number of articles. Here we introduce the concept of mesh-comparable components of the Auslander-Reiten quiver, which are components for which a Riedtmann functor exists without the necessity of taking a covering, such as the universal or the generic one. We show properties of this type of component, and study the problem of compositions of irreducible morphisms in this context.  

\vspace{1ex}

{\it \noindent Keywords: Riedtmann functors, well-behaved functors, mesh category, compositions of irreducible morphisms

\noindent Mathematics Subject Classification (MSC 2020): Primary 16G70, Secondary 16G10}
\end{abstract}

\section*{}
Given a finite-dimensional $k$-algebra $A$ (where $k$ is a field), one way of organizing the category mod$A$ of the finitely generated right $A$-modules is by the so-called Auslander-Reiten quiver $\Gamma$(mod$A)$. The vertices of this quiver correspond to the isomorphism classes of the indecomposable objects in mod$A$ and the arrows indicate the existence of irreducible morphisms between them. Recall that, for two $A$-modules $X,Y$, a morphism $f \colon X \lra Y$ is {\it irreducible} if it does not split and if for any decomposition $f = gh$ then either $g$ is a split epimorphism or $h$ is a split monomorphism. 

By using {\it well-behaved functors}, we have a tool for studying Auslander-Reiten components. These functors were introduced in the related works of C. Riedtmann \cite{Rie} and K. Bongartz, P. Gabriel \cite{BG}. Although well-behaved functors have slightly different definitions throughout the literature, they usually have the form $F: k(\Delta) \rightarrow \ind \Gamma$, where $\ind \Gamma$ is the subcategory of indecomposable modules in a component $\Gamma$ of the Auslander-Reiten quiver, and $k(\Delta)$ is the {\it mesh category} over a {\it covering} $\Delta$ of the quiver $\Gamma$, usually the {\it universal covering} or the {\it generic covering}.

The idea behind well-behaved functors is that $k(\Delta)$ is a combinatorial representation of $\ind \Gamma$, and usually has a better behavior than the latter. So a functor $k(\Delta) \rightarrow \ind \Gamma$ with good properties allows a comparison between these two categories, thus enabling us to obtain results about one from the other.

Well-behaved functors $k(\Delta) \rightarrow \ind \Gamma$ can be shown to always exist for a certain covering $\Delta$ of $\Gamma$, at the price for us having to consider the covering quiver $\Delta$ which is in general different and more complicated than $\Gamma$. The question on whether functors of the form $k(\Gamma) \rightarrow \ind \Gamma$ can exist was briefly discussed by Riedtmann herself in the seminal work \cite{Rie}. Although such functors exist sometimes, there are examples (see, e.g., Example~\ref{ex:14bis} below) where they do not. 

Later works, mainly by S. Liu (\cite{Liu3}), which alludes to a certain {\it choice of irreducible morphisms} as a tool to prove the main theorem of that paper, and by C. Chaio, M. I. Platzeck, S. Trepode (\cite{CPT}), which use the terminologies {\it irreducible morphisms which satisfy the mesh-relations} and {\it paths in the mesh}, seem to suggest the use of well-behaved functors $k(\Gamma) \rightarrow \ind \Gamma$. Namely, we prove in Proposition~\ref{prop:mesh-c equiv escolha} below that the existence of such a functor is equivalent to a choice as prescribed by S. Liu in the work cited above.

That called for a more systematic approach to components having this kind of structure, and this is one of the main goals here. Accordingly, given a component $\Ga$ of the Auslander-Reiten quiver, whenever there exists a well-behaved functor $F: k(\Ga) \rightarrow \ind \Ga$, we will say that $\Ga$ is {\it mesh-comparable via $F$}. As we will see, not all components are mesh-comparable, and the concept of mesh-comparable, although related, is different from the well-known concept of {\it standard components}.

As a consequence of the good properties of Riedtmann's well-behaved functors, the chosen irreducible morphisms, as alluded to by \cite{Liu3} and studied in Section~\ref{sec:defi mesh-c} below, satisfy a very nice property with respect to their compositions. In particular, this is strongly related to the problem of compositions of irreducible morphisms, as we now quickly discuss. 

Let us rephrase the definition of irreducible morphisms as given above. Given two indecomposable $A$-modules $X$ and $Y$, we denote by $\rad_A(X,Y)$ the set of non-isomorphisms $X \rightarrow Y$. Clearly, one can extend it to general modules as follows: $\rad_A(\oplus_{i=1}^n X_i,\oplus_{j=1}^m Y_j) = \oplus_{i=1}^n\oplus_{j=1}^m \rad_A(X_i,Y_j)$. The morphisms in $\rad_A$ are called {\it radical morphisms}. Using the fact that $\rad_A$ is an ideal of the category $\md A$, one can consider its powers, defined recursively by: $\rad_A^0 = \Hom_A, \rad_A^1= \rad_A, \rad_A^n = \rad_A^{n-1} \cdot \rad_A$, where the product $\cdot$ stands for composition of morphisms. We also define $\rad_A^{\infty} = \cap_{n \geq 0} \rad_A^n$. So, a morphism $X \rightarrow Y$ with $X,Y$ indecomposable is irreducible if and only if it belongs to $\rad_A(X,Y) \setminus \rad_A^2(X,Y)$. As shown by Auslander-Reiten theory, irreducible morphisms generate any other morphism modulo $\rad^{\infty}$, what emphasizes the importance of irreducible morphisms.

Clearly, the composition of $n$ irreducible morphisms between indecomposable modules belongs to $\rad^n$, and one could wonder if it is also true that, provided it is non-zero, it does not belong to $\rad^{n+1}$. This is not true for $n \geq 2$, as shown in \cite{CCT1}. Indeed, this same paper shows an example of two irreducible morphisms whose composite is non-zero and belongs to $\rad^{\infty}$. Deciding in which cases there might be a non-zero composition of $n$ irreducible morphisms belonging to $\rad^{n+1}$ has become an interesting line of investigation. For example, \cite{CCT1} fully characterizes the case of two irreducible morphisms whose composite belongs to $\rad^3 \setminus \{0\}$.  
In mesh-comparable components, as we shall introduce below, if we take $n$ chosen irreducible morphisms (i.e., morphisms which were chosen for each arrow of $\Gamma$), then their composite either belongs to $\rad^n \setminus \rad^{n+1}$ or is zero. (This is our Corollary~\ref{cor:composicao_bem_comportada_chosen}).

From the compositions of chosen morphisms, we will show how to study compositions of general irreducible morphisms, by using a method based on decomposing a morphism in parts given by the filtration of the radical ideal. Thus mesh-comparable components are particularly suited for the study of composites of irreducible morphisms.

This paper is organized as follows. Section~\ref{sec:prelim} is devoted to Preliminaries, when we recall the main concepts used along the paper and state the proper notations. In Section~\ref{sec:defi mesh-c}, we define mesh-comparable components and establish basic properties of these components, with examples. The decomposition of a morphism in its parts is done in Section~\ref{sec:decomposition}. We apply this decomposition in Section~\ref{sec:new proof Liu}, to give a new proof of the well-known result by S. Liu that all standard components are generalized standard, and in Section~\ref{sec:mesh-c x compositions}, relating to the problem of compositions of irreducible morphisms. Section~\ref{sec:standard x mesh-c} brings one of our main results here, Theorem~\ref{th:gen st + mesh-comp}, which states that generalized standard mesh-comparable components are standard. We bring consequences and examples around this result. Finally, Section~\ref{sec:mesh-c x degrees} is devoted to a relation between mesh-comparability and the {\it degrees} of irreducible morphisms, a concept defined by S. Liu in \cite{Liu1}.

\section{Preliminaries}
\label{sec:prelim}

Along the paper, $k$ will denote an algebraically closed field and $A$ will denote an algebra, that is, a finite-dimensional associative and unitary $k$-algebra $A$. Also, we denote by $\md A$ the category of finitely generated right $A$-modules. For details in module theory, we refer the reader to \cite{AC2}.  

\subsection{Quivers}
\label{subsec:quiv}

A \textbf{quiver} $Q$ is given by a 4-uple $Q=(Q_0,Q_1,s,e)$, where $Q_0, Q_1$ are sets and $s,e:Q_1 \rightarrow Q_0$ are functions. The elements of $Q_0$ are called the  {\bf vertices} of $Q$, while those of $Q_1$ are called the {\bf arrows} of $Q$. Also, given an arrow $\alpha \in Q_1$, the vertices $s(\alpha)$ and $e(\alpha)$ are called, respectively, the {\bf start vertex} and the {\bf end vertex} of $\alpha$. A {\bf path of length $n$ in $Q$} is given by a sequence of $n$ arrows $\beta_n \cdots \beta_1$, where, for every $1 \leq i \leq n$, $e(\beta_i) = s(\beta_{i+1})$ for every $1 \leq i < n$. Additionally, one associates to each vertex $x$ of $Q$ a trivial \textbf{path of length 0}, denoted by $\epsilon_x$. Of course, $s(\epsilon_x)=e(\epsilon_x) =x$.

\subsection{Auslander-Reiten theory}
\label{subsec:ar}

We have recalled the definition of {\it radical} and {\it irreducible} morphisms at the introduction, and we use this section to give further reminders on Auslander-Reiten theory. However, for concepts of this theory which are not covered in this section, we indicate \cite{AC}.

A morphism $f: X \rightarrow Y$ between two $A$-modules is called {\bf left minimal} if every endomorphism $h: Y \rightarrow Y$ such that $hf = f$ is invertible. The morphism $f$ is called {\bf left almost split} if it is radical, the module $X$ is indecomposable and if for every radical morphism $u: X \rightarrow U$ there is a $u':Y \rightarrow U$ such that $u = u'f$. A morphism is called a {\bf source morphism} if it is both left minimal and left almost split. Dually, one can define {\bf right minimal} and {\bf right almost split} morphisms, and a morphism which is both of these things is called a {\bf sink morphism}.

A short exact sequence $0 \rightarrow L \xrightarrow{f} M \xrightarrow{g} N \rightarrow 0 $ in mod$A$ is called an \textbf{almost split sequence} (or an \textbf{Auslander-Reiten sequence}) provided both $f$ and $g$ are irreducible morphisms. 

If $0 \rightarrow L \xrightarrow{f} M \xrightarrow{g} N \rightarrow 0 $ is an Auslander-Reiten sequence, then it does not split, $L$ and $N$ are indecomposable modules, $f$ is a source morphism and $g$ is a sink morphism.

A key result of Auslander-Reiten theory is the theorem on the existence and uniqueness of Auslander-Reiten sequences, as follows: given a non-projective indecomposable module $N$, there must be an Auslander-Reiten sequence $0 \rightarrow \tau N \rightarrow M \rightarrow N \rightarrow 0$, which is uniquely determined by $N$ up to isomorphism of short exact sequences. Dually, if $L$ is indecomposable not injective, there is an Auslander-Reiten sequence $0 \rightarrow L \rightarrow M \rightarrow \tau^{-1} L \rightarrow 0$, which is uniquely determined by $L$. Actually, $\tau$ and its inverse $\tau^{-1}$ above give rise to functors, defined between stable quotients of the module category. The functor $\tau$ is called the {\bf Auslander-Reiten translation}.

The \textbf{Auslander-Reiten quiver} $\Gamma(\md A)$ of the algebra $A$ is a quiver defined as follows: (i) the vertices of  $\Gamma(\md A)$ are in bijection with the isomorphism classes of indecomposable $A$-modules; (ii) given vertices $M, N$, the number of arrows $M\rightarrow N$ in $\Gamma(\md A)$ equals the dimension of the $k$-vector space $\operatorname{irr}_k(M,N) \doteq \rad_A(M,N)/\rad^2_A(M,N)$ of the irreducible morphisms from $M$ to $N$. Clearly, the Auslander-Reiten translation $\tau$ acts on the vertices of $\Gamma(\md A)$. 

Connected components of the Auslander-Reiten quiver will be called here as {\it Auslander-Reiten components}, for short. Also, if $\Gamma$ is an Auslander-Reiten component of $A$, then $\ind \Gamma$ will stand for the subcategory of $\md A$ determined by all indecomposable modules which belong to $\Gamma$.

\subsection{Translation quivers}
\label{subsec:tr quivs}

Translation quivers are a generalization of Auslander-Reiten quivers, which takes into account the combinatorial structure based on the Auslander-Reiten translation.
Namely, a \textbf{translation quiver} is a quiver $\Gamma$ without loops (that is, no arrows starting and ending at the same vertex) such that: (i) $\Gamma$ has a set of vertices called {\it projective vertices} and another set of vertices called {\it injective vertices}; (ii)  there is a bijection (called \textit{translation}) $\tau \colon  x \mapsto \tau(x)$ between non-projective and non-injective vertices; and (iii) for each pair of vertices $x$ and $y$ of $\Gamma$, with $x$ non-projective, there is a bijection $\sigma: \alpha \mapsto \sigma(\alpha)$ between arrows of the form $y \rightarrow x$ and arrows of the form $\tau(x) \rightarrow y$. In addition, we shall assume that translations quivers are locally finite, that is, there are only finitely many arrows starting or ending at each vertex. In general, we will always use the letters $\tau$ e $\sigma$ to denote the functions associated to a given translation quiver $\Gamma$. 

Every component of an Auslander-Reiten quiver is an example of a translation quiver, with the Auslander-Reiten translation being the function $\tau$ above.

A path  $x_0 \xrightarrow{\alpha_1} x_1 \xrightarrow{\alpha_2} \cdots \xrightarrow{\alpha_n} x_n$ in a translation quiver $\Gamma$ is called {\bf sectional} provided  $x_i \neq \tau x_{i+2}$ for all $0 \leq i \leq  n-2$.

Finally, given a non-projective vertex $x$ of $\Gamma$, the full subquiver of $\Gamma$ determined by all the arrows that end at $x$ and the arrows that start at $\tau x$ is called the {\bf mesh} ending at $x$ (or starting at $\tau x$). If $\Gamma$ is an Auslander-Reiten quiver, then a mesh corresponds to an Auslander-Reiten sequence.

    \begin{displaymath}
        \xymatrix{ && x_1 \ar[ddrr]^{\alpha_1}&& \\
        && x_2 \ar[drr]_{\alpha_2}&&\\
        \tau x \ar[uurr]^{\sigma(\alpha_1)} \ar[urr]_{\sigma(\alpha_2)} \ar[drr]_{\sigma(\alpha_r)}&& \vdots && x \\
        &&x_r \ar[urr]_{\alpha_r} &&}
    \end{displaymath}

\subsection{Riedtmann functors}

The {\it well-behaved functors}, as the literature usually has them, were originally introduced by Riedtmann in \cite{Rie} and right after in parallel by Bongartz and Gabriel (\cite{BG}). In our work we call these functors as {\it Riedtmann functors} since we consider a minor extension of the original concept and to give more credit for the creators. We also indicate \cite{CMT1,CMT2} for further generalizations of Riedtmann functors. In our survey \cite{CCsurvey}, we have included a comprehensive explanation of this concept.

We dedicate this section to recall these functors, but differently from the original sources, we do not define them over coverings of quivers, since that is not necessary here. We will follow an {\it ad hoc} approach in this paper.

We have fixed $k$ an algebraically closed field. Let $\Gamma$ be a translation quiver. With these data we define two categories:

The \textbf{path category} over $\Gamma$ is defined as the category $k\Gamma$ whose objects are the vertices of $\Gamma$ and the morphisms between two vertices $x$ and $y$ are the elements of the vector space given by formal linear combinations of paths over $\Gamma$ which go from $x$ to $y$. 

The \textbf{mesh category} over $\Gamma$, denoted by $k(\Gamma)$, is the quotient category of the path category $k\Gamma$ by the ideal generated by all {\bf mesh relations} in $\Gamma$, that is, elements of the form $\sum \alpha (\sigma \alpha): \tau x \rightarrow x$, where $x$ is a non-projective vertex of $\Gamma$ and the summation runs through all arrows $\alpha$ that end in $x$.

Note that the categories $k\Ga$ and $k(\Ga)$ are \textbf{$\mathbb{N}$-graded}, with the paths (or classes of paths) being homogeneous elements with degree given by their length, and the mesh relations being homogeneous elements of degree 2.

We can define the \textbf{radical of the mesh category} $k(\Ga)$ as being the ideal $\Rr k(\Ga)$ generated by the morphisms of degree 1, which are the equivalence classes of arrows in $\Gamma$. We also consider the powers of the ideal $\Rr k(\Ga)$, which are defined recursively: $\Rr^0 k(\Ga) = k(\Ga)$, $\Rr^1 k(\Ga) = \Rr k(\Ga)$, and for $n \geq 1$, $\Rr^n k(\Ga) = \Rr^{n-1} k(\Ga)\cdot \Rr k(\Ga)$. Also note that $\Rr^n k(\Ga)$ coincides with the ideal generated by the classes of the paths having length greater than or equal to $n$.

Although the definition of path and mesh categories make sense for general translation quivers, the definition of Riedtmann functors require $\Gamma$ to be a component of the Auslander-Reiten quiver.

\begin{defi}[\cite{Rie, BG, CMT1}]
Suppose $k$ is algebraically closed. Let $\Gamma$ be a component of the Auslander-Reiten quiver of $A$. A $k$-linear functor $F:k(\Gamma) \rightarrow \ind \Gamma$ is called a {\bf Riedtmann functor} if the following conditions are verified for every vertex $X$ of $\Gamma$:

\begin{enumerate}
    \item $FX = X$
    \item if $\alpha_1: X \rightarrow X_1, \ldots, \alpha_r: X \rightarrow X_r$ are all the arrows in $\Gamma$ that start at $X$, then $[F(\overline{\alpha_1}) \ldots F(\overline{\alpha_r})]^t: X \rightarrow X_1 \oplus \ldots \oplus X_r$ is a source morphism (i.e., a left minimal almost split morphism).
    
    \item if $\alpha_1: X_1 \rightarrow X, \ldots, \alpha_r: X_r \rightarrow X$ are all the arrows in $\Gamma$ that end at $X$, then $[F(\overline{\alpha_1}) \ldots F(\overline{\alpha_r})]: X_1 \oplus \ldots \oplus X_r \rightarrow X$ is a sink morphism (i.e, a right minimal almost split morphism).
\end{enumerate}
\end{defi}

\begin{obs}
For a Riedtmann functor $F:k(\Gamma) \rightarrow \ind \Gamma$, if $\alpha$ is an arrow of $\Gamma$, then $F(\overline{\alpha})$ is an irreducible morphism. (Actually this type of condition was used to define these functors in \cite{BG,Rie}, which only deal with algebras of finite type).

Reciprocally, only in the case where $\Gamma$ has trivial valuation (i.e., at most one arrow between two vertices), if $F:k(\Gamma) \rightarrow \ind \Gamma$ is a $k$-linear functor such that $FX = X$ for every vertex $X$ of $\Gamma$ and $F(\overline{\alpha})$ is an irreducible morphism for every arrow $\alpha$ of $\Gamma$, then $F$ is a Riedtmann functor.
\end{obs}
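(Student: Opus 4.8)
The plan is to treat the two assertions separately, since the first is a direct consequence of the definition while the second requires genuine work. For the first claim, fix an arrow $\alpha \colon X \to X_i$ and let $\alpha_1, \dots, \alpha_r$ be all the arrows of $\Gamma$ starting at $X$, so that by condition (2) the morphism $g = [F(\overline{\alpha_1}) \dots F(\overline{\alpha_r})]^t \colon X \to X_1 \oplus \dots \oplus X_r$ is a source morphism. I would then invoke the standard fact from Auslander-Reiten theory that every component of a source morphism whose codomain is a direct sum of indecomposables is itself an irreducible morphism; applying this to the component indexed by $\alpha$ gives that $F(\overline{\alpha})$ is irreducible. (Dually, one could argue from condition (3) using sink morphisms.)

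For the converse, assume $\Gamma$ has trivial valuation and that $F$ sends each vertex to itself and each arrow-class to an irreducible morphism; I must check conditions (2) and (3), and by duality it suffices to verify (2). Fix $X$ with outgoing arrows $\alpha_1, \dots, \alpha_r$ to $X_1, \dots, X_r$. Because the valuation is trivial, the actual source morphism of $X$ has codomain precisely $E = X_1 \oplus \dots \oplus X_r$, with each $X_i$ occurring with multiplicity one and the $X_i$ pairwise non-isomorphic; write it as $g = [g_1 \dots g_r]^t$ with each $g_i \colon X \to X_i$ irreducible. Since $\operatorname{irr}_k(X, X_i)$ is then one-dimensional, each irreducible morphism $F(\overline{\alpha_i})$ satisfies $F(\overline{\alpha_i}) = \lambda_i g_i + \rho_i$ with $\lambda_i \in k^\ast$ and $\rho_i \in \rad_A^2(X, X_i)$. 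Setting $\phi = \operatorname{diag}(\lambda_1, \dots, \lambda_r) \in \operatorname{Aut}(E)$, the candidate morphism $f = [F(\overline{\alpha_1}) \dots F(\overline{\alpha_r})]^t$ takes the form $f = \phi g + \rho$ with $\rho \in \rad_A^2(X, E)$, while $\phi g$ is again a source morphism.

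The heart of the argument, and the step I expect to be the main obstacle, is absorbing the correction term $\rho$. Since $\phi g$ is left almost split and $\rho$ is radical, I can write $\rho = \psi \circ \phi g$ for some $\psi \in \operatorname{End}(E)$, so that $f = (\operatorname{id}_E + \psi)\, \phi g$; it then suffices to show $\operatorname{id}_E + \psi$ is an automorphism, i.e. that $\psi \in \rad \operatorname{End}(E)$. Here the triviality of the valuation is essential: comparing the $i$-th components of $\rho = \psi \phi g$, the off-diagonal contributions $\psi_{ij} g_j$ with $i \neq j$ lie in $\rad_A^2$ because every morphism between the non-isomorphic indecomposables $X_j$ and $X_i$ is radical, whence $\psi_{ii} g_i \in \rad_A^2(X, X_i)$; as $g_i$ is irreducible, this forces each diagonal block $\psi_{ii}$ to be a non-isomorphism. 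Since $E$ is a sum of pairwise non-isomorphic indecomposables, $\operatorname{End}(E)/\rad \operatorname{End}(E) \cong \prod_i k$, and non-invertibility of every $\psi_{ii}$ is exactly the condition $\psi \in \rad \operatorname{End}(E)$; hence $\operatorname{id}_E + \psi$ is invertible and $f = (\operatorname{id}_E + \psi)\, \phi g$ is a source morphism, proving (2). The same reasoning applied to sink morphisms yields (3). Finally, I would note that this converse genuinely fails without trivial valuation: if some $\operatorname{irr}_k(X, X_i)$ had dimension at least two, one could choose independent irreducible morphisms whose classes modulo $\rad_A^2$ are linearly dependent, and the assembled morphism would fail to be left minimal almost split, which is why the statement is restricted to the trivial-valuation case.
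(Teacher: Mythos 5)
The paper states this remark without proof, so there is nothing to compare against line by line; your proposal supplies the missing argument, and it is correct. The first half is exactly the standard Auslander--Reiten fact that the components of a minimal left (or right) almost split morphism into a direct sum of indecomposables are irreducible, applied to condition (2) (dually (3)) of the definition. For the converse, each key step holds: trivial valuation ensures the minimal left almost split morphism out of $X$ is $g=[g_1 \cdots g_r]^t\colon X \to X_1\oplus\cdots\oplus X_r$ with the $X_i$ pairwise non-isomorphic and each $\operatorname{irr}_k(X,X_i)$ one-dimensional, so $F(\overline{\alpha_i})=\lambda_i g_i+\rho_i$ with $\lambda_i\neq 0$ and $\rho_i\in\rad^2$; factoring the radical correction through the left almost split morphism $\phi g$ gives $f=(\operatorname{id}_E+\psi)\,\phi g$; and trivial valuation is precisely what makes the off-diagonal entries of $\psi$ automatically radical, so that the non-invertibility of the diagonal entries (forced by $\psi_{ii}g_i\in\rad^2$ and irreducibility of $g_i$) yields $\psi\in\rad\operatorname{End}(E)$, whence $\operatorname{id}_E+\psi$ is an automorphism and $f$ is a source morphism. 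Note that compatibility with the mesh relations never needs checking, since $F$ is assumed to be defined on $k(\Gamma)$ already; your argument correctly never invokes it. Your closing observation (for two parallel arrows one could assign the same irreducible morphism, destroying left minimality of the assembled map) is also the right explanation of why the trivial-valuation hypothesis cannot be dropped.
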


We also need a key property of Riedtmann functors, that relates the filtrations of the radical of the mesh category with the ones of the module category.

\begin{teo}[consequence of \cite{CMT2}, Thm. B]
\label{th:b}
Let $F:k(\Gamma) \rightarrow \ind \Gamma$ be a Riedtmann functor. Then for every $n \geq 0$ and every pair of vertices $X,Y \in \Gamma_0$, the functor $F$ induces a bijection

$$ \frac{\mathcal{R}^nk(\Gamma)(X,Y)}{\mathcal{R}^{n+1}k(\Gamma)(X,Y)}\xrightarrow{\sim} \frac{\rad^n(X,Y)}{\rad^{n+1}(X,Y)}$$

\end{teo}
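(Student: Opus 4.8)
The plan is to prove the statement by induction on $n$, after first checking that $F$ respects the two radical filtrations so that the map is even well-defined. Since $F$ sends every arrow class $\bar\alpha$ to an irreducible morphism (lying in $\rad\setminus\rad^2$), and $\mathcal{R}^n k(\Gamma)$ is generated by classes of paths of length $\geq n$, functoriality gives $F\bigl(\mathcal{R}^n k(\Gamma)(X,Y)\bigr)\subseteq\rad^n(X,Y)$ for every $n$. Hence $F$ induces a $k$-linear map $\bar F_n$ between the two quotients, and it remains to see that each $\bar F_n$ is bijective. I would record at the outset that, because $k(\Gamma)$ is $\mathbb{N}$-graded with the mesh relations homogeneous of degree $2$, the left-hand quotient $\mathcal{R}^n k(\Gamma)(X,Y)/\mathcal{R}^{n+1}k(\Gamma)(X,Y)$ is canonically the degree-$n$ homogeneous component of $k(\Gamma)(X,Y)$, spanned by classes of length-$n$ paths modulo mesh relations.

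For the base cases: when $n=0$ both quotients are $k$ if $X=Y$ and $0$ otherwise (on the module side using that $k$ is algebraically closed, so $\End(X)/\rad\End(X)=k$), and $\bar F_0$ sends $\epsilon_X$ to $\Id_X$; when $n=1$ both quotients have dimension equal to the number of arrows $X\to Y$ (by the definition of the Auslander-Reiten quiver on the right), and conditions (2)--(3) of a Riedtmann functor — the fact that the $F(\bar\alpha_i)$ assemble into source and sink morphisms, which are in particular left/right minimal — guarantee that the images of the arrow classes form a basis of $\operatorname{irr}_k(X,Y)$. So $\bar F_1$ is bijective.

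For the inductive step I would fix $(X,Y)$ and use the sink morphism ending at $Y$, which exists for every $Y$ by condition (3): write it as $g=[g_1,\dots,g_r]$ with $g_i=F(\bar\alpha_i)$ and $\alpha_i\colon Y_i\to Y$ the arrows ending at $Y$. On the mesh side, every length-$(n+1)$ path into $Y$ factors as $\bar\alpha_i$ times a length-$n$ path into $Y_i$, and the only relations are the mesh relation $\sum_i\bar\alpha_i\,\overline{\sigma\alpha_i}$ precomposed with length-$(n-1)$ paths into $\tau Y$ (when $Y$ is non-projective; when $Y$ is projective there is no mesh relation). This yields an exact sequence presenting the degree-$(n+1)$ component at $Y$ in terms of the degree-$n$ components at the $Y_i$ and the degree-$(n-1)$ component at $\tau Y$. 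The strategy is to establish the \emph{same} exact presentation on the module side, with $\bar\alpha_i,\overline{\sigma\alpha_i}$ replaced by $g_i=F(\bar\alpha_i)$ and $h_i=F(\overline{\sigma\alpha_i})$, the module-side relation $\sum_i g_i h_i=0$ being exactly the composition of the two maps in the Auslander-Reiten sequence $0\to\tau Y\to\bigoplus_i Y_i\to Y\to 0$. Since $F$ is a functor it intertwines the two presentations, so once both rows are exact a routine diagram chase, together with the inductive hypothesis that $\bar F_{n-1}$ and $\bar F_n$ are bijective, forces $\bar F_{n+1}$ to be bijective as well.

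The main obstacle is precisely the exactness of the module-side row. Surjectivity — that every class in $\rad^{n+1}(X,Y)/\rad^{n+2}(X,Y)$ has the form $\sum_i g_i u_i$ with $u_i\in\rad^n(X,Y_i)$ — follows from factoring a radical morphism through the right almost split map $g$. The delicate point is that the only linear relations among such combinations, modulo $\rad^{n+2}(X,Y)$, are those forced by the Auslander-Reiten sequence, namely the image of $v\mapsto(h_i v)_i$ from $\rad^{n-1}(X,\tau Y)/\rad^n(X,\tau Y)$; ruling out any further relations is where the minimality built into the source and sink conditions of $F$ is essential. This is exactly the module-theoretic content isolated in Theorem B of \cite{CMT2}, so the cleanest route — and the one I expect the authors take — is to invoke that theorem directly, the present statement being the special case of the trivial covering $\Gamma\to\Gamma$; one then only needs to verify that a Riedtmann functor as defined above qualifies as a well-behaved functor in the framework of \cite{CMT2}.
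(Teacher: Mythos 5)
Your proposal is correct and, in the end, coincides with what the paper does: the paper offers no independent argument for Theorem~\ref{th:b} at all, but states it precisely as a consequence of Theorem~B of \cite{CMT2}, specialized (as you say) to the trivial covering $\Gamma \rightarrow \Gamma$, with the only verification needed being that a Riedtmann functor in the sense of this paper is a well-behaved functor in the sense of \cite{CMT2}. Your inductive sketch via comparison of presentations at each mesh is a reasonable outline of how the cited result is actually proved, and you correctly isolate the exactness of the module-side row as the point that cannot be obtained by formal arguments alone, which is exactly why the paper delegates it to \cite{CMT2}.
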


\section{Mesh-comparable components}
\label{sec:defi mesh-c}

In this section, we shall define mesh-comparability of components of the Auslander-Reiten quiver and state our first results on it.

\subsection{Definition of mesh-comparability}

\begin{defi}
Let $\Gamma$ be a component of the Auslander-Reiten quiver of an algebra $A$. We say that $\Gamma$ is \textbf{mesh-comparable} via $F: k(\Gamma) \rightarrow \ind \Gamma$ provided $F$ is a Riedtmann functor. In this case, we will also say that $F$ is a  \textbf{mesh-comparison}.
\end{defi}

Although the name `mesh-comparable' is being introduced here, there are existing results in literature, which construct Riedtmann functors using the well-known technique of {\it knitting}, and whose proofs can be easily adapted to give our first examples of mesh-comparable components. We highlight them in the following proposition:

\begin{prop}
\label{prop:easy exs mesh-c}

Suppose $\Gamma$ is an Auslander-Reiten component satisfying one of the following properties:

\begin{enumerate}
\item[(a)] $\Gamma$ {\it has length} (\cite{BG}), i.e., for every pair of vertices $x,y \in \Gamma_0$, all paths from $x$ to $y$ have the same length.

\item[(b)] $\Gamma$ is of type $\mathbb{Z} \Delta$, where $\Delta$ is a tree quiver.
\end{enumerate}

Then $\Gamma$ is mesh-comparable.

\end{prop}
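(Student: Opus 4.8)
The plan is to construct the functor $F$ by the knitting technique: set $FX=X$ on objects and choose, inductively, an irreducible morphism $F(\overline{\alpha})$ for each arrow $\alpha$ of $\Gamma$ in such a way that every mesh is carried to an almost split sequence. Concretely, for a non-projective vertex $x$ with mesh $\tau x \xrightarrow{\sigma\alpha_i} y_i \xrightarrow{\alpha_i} x$, I want the column $[F(\overline{\sigma\alpha_1})\ \cdots\ F(\overline{\sigma\alpha_r})]^t$ to be the source morphism $\tau x \to \bigoplus_i y_i$ and the row $[F(\overline{\alpha_1})\ \cdots\ F(\overline{\alpha_r})]$ to be the sink morphism $\bigoplus_i y_i \to x$ of the almost split sequence $0 \to \tau x \to \bigoplus_i y_i \to x \to 0$. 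Since the composite of these two maps is zero, the mesh relation at $x$ is then automatically sent to $0$, so $F$ descends from the path category to $k(\Gamma)$, and conditions (2)--(3) in the definition of a Riedtmann functor hold by construction.

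To run the induction I need a length function. In case (a) the hypothesis directly provides a function $\ell\colon \Gamma_0 \to \mathbb{Z}$ with $\ell(y)=\ell(x)+1$ for every arrow $x\to y$ (this is the content of \emph{having length} in \cite{BG}). In case (b), where $\Gamma \cong \mathbb{Z}\Delta$ with $\Delta$ a tree, I would produce $\ell$ by hand: since $\Delta$ is a tree its underlying graph has no cycles, so there is a height function $h\colon \Delta_0 \to \mathbb{Z}$ with $h(e(a))=h(s(a))+1$ for every arrow $a$ of $\Delta$, and then $\ell(n,i)=2n+h(i)$ increases by $1$ along every arrow of $\mathbb{Z}\Delta$. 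In particular a component of type $\mathbb{Z}\Delta$ (tree) also has length, so (b) is a special case of (a); what makes it easier is that $\mathbb{Z}\Delta$ has trivial valuation. With $\ell$ fixed, I would assign the $F(\overline{\alpha})$ by induction on the $\ell$-value of the vertex at which $\alpha$ ends. If $x$ is projective, the sink morphism at $x$ is the inclusion $\rad x \hookrightarrow x$; I choose it and read off $F(\overline{\alpha})$ for the arrows ending at $x$, there being no mesh relation to satisfy. If $x$ is non-projective, the arrows $\sigma\alpha_i\colon \tau x \to y_i$ have already been assigned (their targets $y_i$ have smaller length), so the column $f=[F(\overline{\sigma\alpha_i})]^t$ is already determined; by the uniqueness of almost split sequences I take $f$ to be the left-hand map of the sequence ending at $x$ and define the arrows $\alpha_i$ to be the components of the corresponding right-hand map $g$, so that $gf=0$.

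The delicate point, which I expect to be the main obstacle, is \emph{consistency}: an arrow $y_i \to x$ is simultaneously an arrow into $x$ (a component of the sink morphism at $x$, fixed above) and an arrow out of $y_i$ (a component of the source morphism at $y_i$). One must check that the values forced by the meshes ending at the various targets reassemble, at each source vertex, into a genuine source morphism, so that condition (2) holds together with condition (3). This is where the length function is essential: it linearly orders the meshes so that no arrow is constrained twice in conflicting ways, ruling out the \emph{feedback} that could otherwise obstruct the construction; the verification is the standard knitting argument of \cite{BG}, of which the present statement is an easy adaptation. Non-trivial valuations turn this bookkeeping into the only real work, since one is then matching bases of the spaces $\operatorname{irr}_k(y_i,x)$ rather than single morphisms.

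In the trivial-valuation case (b) the obstacle disappears entirely. By the Remark after the definition it suffices to exhibit \emph{any} $k$-linear functor with $FX=X$ and $F(\overline{\alpha})$ irreducible, so I need only arrange the mesh relations. For the mesh at a non-projective $x$, writing $f_i,g_i$ for the components of the almost split sequence and $F(\overline{\sigma\alpha_i})=\lambda_i f_i$ with the already-chosen nonzero scalars $\lambda_i$ (legitimate since irreducible morphisms $\tau x\to y_i$ are unique up to a nonzero scalar), I simply set $F(\overline{\alpha_i})=\lambda_i^{-1}g_i$. Then $\sum_i F(\overline{\alpha_i})F(\overline{\sigma\alpha_i})=\sum_i g_i f_i=0$, each $F(\overline{\alpha_i})$ is irreducible, and every arrow is assigned exactly once (when its endpoint is treated), so no compatibility across distinct meshes is needed. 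This settles (b), and the more laborious valued version of the same knitting settles (a).
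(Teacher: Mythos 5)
Your overall route is the same as the paper's: the paper obtains $F$ by knitting, citing \cite{BG}, 3.1b for (a) and \cite{Rie}, 2.2 for (b). The problem is in your execution of case (b) --- the one case you work out in full --- and it occurs at exactly the point you yourself single out as delicate. You claim that the previously assigned morphisms satisfy $F(\overline{\sigma\alpha_i})=\lambda_i f_i$ because ``irreducible morphisms $\tau x\to y_i$ are unique up to a nonzero scalar''. This is false: trivial valuation only gives $\dim_k\operatorname{irr}_k(\tau x,y_i)=1$, so two irreducible morphisms $\tau x\to y_i$ agree up to a nonzero scalar \emph{modulo} $\rad^2(\tau x,y_i)$, not on the nose. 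Indeed, $F(\overline{\sigma\alpha_i})$ was fixed at an earlier stage as a component of the sink morphism of the mesh ending at $y_i$ --- a different almost split sequence from the one ending at $x$ --- so in general $F(\overline{\sigma\alpha_i})=\lambda_i f_i+\rho_i$ with $0\neq\rho_i\in\rad^2(\tau x,y_i)$. Your assignment $F(\overline{\alpha_i}):=\lambda_i^{-1}g_i$ then gives $\sum_i F(\overline{\alpha_i})F(\overline{\sigma\alpha_i})=\sum_i\lambda_i^{-1}g_i\rho_i$, which has no reason to vanish; hence $F$ need not descend to $k(\Gamma)$, and the appeal to the Remark (which presupposes a functor already defined on the mesh category) is unavailable. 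For the same reason, the sentence ``no compatibility across distinct meshes is needed'' is wrong: an arrow $y\to x$ lies in the mesh ending at $x$ \emph{and}, when $y$ is non-injective, in the mesh ending at $\tau^{-1}y$, so the value it receives when you treat $x$ is constrained again later.

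The repair is precisely the consistency argument you tried to bypass: maintain as an inductive invariant that the already-chosen morphisms assemble into a source morphism $[F(\overline{\sigma\alpha_1})\cdots F(\overline{\sigma\alpha_r})]^t\colon\tau x\to\bigoplus_i y_i$. This is provable when the valuation is trivial: every morphism in $\rad^2(\tau x,M)$ factors through the source morphism $[f_i]^t$ with radical factor, hence $[F(\overline{\sigma\alpha_i})]^t=\varphi\circ[f_i]^t$ with $\varphi$ an automorphism of $\bigoplus_i y_i$, so it is itself a source morphism. Uniqueness of almost split sequences then yields a sequence whose left-hand morphism is exactly $[F(\overline{\sigma\alpha_i})]^t$, and $F(\overline{\alpha_i})$ must be defined as the components of the right-hand morphism of \emph{that} sequence, not as $\lambda_i^{-1}g_i$ for a sequence fixed beforehand. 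Beyond this, two smaller gaps: your induction has no base when $\Gamma$ has no projective vertices, which is the case for $\mathbb{Z}\Delta$ itself, so one must start from an arbitrarily chosen slice and also knit backwards (using the dual assembly statement for sink morphisms); and in case (a) the existence of the grading $\ell$ is not ``the content of having length'': for a general connected quiver, equality of lengths of parallel paths does not imply gradability (orient a cycle on seven vertices as $a\to b'\to b\leftarrow c\to d\leftarrow e\to f\leftarrow a$: no two distinct paths are parallel, yet the signed length around the cycle is $1$), so this implication needs the translation-quiver/Auslander--Reiten structure and is part of what must be taken from \cite{BG}.
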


\begin{proof}
    For the proof of (a), we construct the mesh-comparison $k(\Gamma) \rightarrow \ind \Gamma$ following the argument in \cite{BG}, 3.1b. The proof of (b) is done similarly, but this time following \cite{Rie}, 2.2.
\end{proof}

\subsection{Chosen irreducible morphisms}

As we are about show, existence of mesh-comparability is equivalent to being able to choose an irreducible morphism for each arrow of the Auslander-Reiten component, in such a way that irreducible morphisms corresponding to a mesh form almost split sequences.

\begin{defi}
Let $\Gamma$ be an Auslander-Reiten-component which is mesh-comparable via $F$. Given an arrow $\alpha: X \rightarrow Y$ of  $\Gamma$, we say that $F(\overline{\alpha}):X \rightarrow Y$ is a \textbf{chosen (irreducible) morphism}. In case we want to stress the functor $F$, we would say that it is an \textbf{$F$-chosen} morphism.
\end{defi}

Now we formally state the equivalence between mesh-comparability and the choice of irreducible morphisms.

\begin{prop}
\label{prop:mesh-c equiv escolha}
   The following statements are equivalent for an Auslander-Reiten component $\Gamma$:
   \begin{enumerate}
    \item[(a)] $\Gamma$ is mesh-comparable. 
    \item[(b)] For each arrow $\alpha: X \rightarrow Y$ of $\Gamma$, there exists an irreducible morphism $f_{\alpha}: X \rightarrow Y$ such that:

    \begin{enumerate}
        \item[(i)] If $X$ is a projective module of  $\Gamma$ and $\alpha_1:X_1 \rightarrow X, \ldots, \alpha_r:X_r \rightarrow X$ are all the arrows of $\Gamma$ ending in $X$, then  $[f_{\alpha_1} \cdots f_{\alpha_r}]: X_1 \oplus \cdots \oplus X_r \rightarrow X$ is a right almost split morphism. 
        
        \item[(ii)] If $X$ is an injective module of  $\Gamma$ and $\alpha_1:X \rightarrow X_1, \ldots, \alpha_r:X \rightarrow X_r$ are all the arrows of $\Gamma$ starting at $X$, then  $[f_{\alpha_1} \cdots f_{\alpha_r}]^T: X \rightarrow X_1 \oplus \cdots \oplus X_r$ is a left almost split morphism.

        \item[(iii)] If $X$ is a non-projective module in $\Gamma$ and

        \begin{displaymath}
            \xymatrix{ & X_1 \ar[dr]^{\alpha_1} & \\
            \tau X \ar[ur]^{\sigma \alpha_1} \ar[dr]_{\sigma \alpha_r} & \vdots & X\\
            &X_r \ar[ur]_{\alpha_r} &}
        \end{displaymath}

        is the mesh ending at $X$, then

        $$0 \rightarrow \tau X \xrightarrow{[f_{\sigma \alpha_1} \cdots f_{\sigma \alpha_r}]^T} X_1 \oplus \ldots \oplus X_r \xrightarrow{[f_{\alpha_1} \cdots f_{\alpha_r}]} X \rightarrow 0$$

       is an almost split sequence. 
    \end{enumerate}
   \end{enumerate}
\end{prop}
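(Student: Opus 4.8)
The plan is to prove the two implications of the equivalence separately. The direction (a) $\Rightarrow$ (b) is essentially a translation of the defining conditions of a Riedtmann functor into the language of chosen morphisms. Given a mesh-comparison $F : k(\Gamma) \to \ind \Gamma$, for each arrow $\alpha : X \to Y$ I set $f_\alpha := F(\overline{\alpha})$. By the Remark following the definition, each $f_\alpha$ is an irreducible morphism, so it remains to verify conditions (i), (ii), (iii). Conditions (2) and (3) of a Riedtmann functor directly give that the vectors of chosen morphisms at a vertex form source and sink morphisms; the only subtlety is reconciling the statement of the definition (which asserts \emph{source} and \emph{sink} morphisms, i.e.\ minimal almost split) with the slightly weaker conclusions in (i) and (ii) (which only ask for \emph{almost split} morphisms at projective and injective vertices). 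This is harmless: a source or sink morphism is in particular left or right almost split. For (iii), I would observe that at a non-projective vertex $X$ the sink morphism ending at $X$ is $[f_{\alpha_1}\cdots f_{\alpha_r}]$ and the source morphism starting at $\tau X$ is $[f_{\sigma\alpha_1}\cdots f_{\sigma\alpha_r}]^T$; since these are minimal almost split, standard Auslander--Reiten theory assembles them into the almost split sequence ending at $X$, and the mesh relation satisfied by $F$ guarantees the composite of the two maps is zero, so the middle term and maps form exactly the almost split sequence.

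For the converse (b) $\Rightarrow$ (a), the task is to \emph{construct} a Riedtmann functor $F$ from a given family $\{f_\alpha\}$. I would define $F$ on objects by $FX = X$ and on the generating arrows by $F(\overline{\alpha}) = f_\alpha$, then extend $k$-linearly and multiplicatively to all of $k(\Gamma)$ by sending a class of a path $\overline{\beta_n \cdots \beta_1}$ to the composite $f_{\beta_n}\cdots f_{\beta_1}$. The central point requiring verification is that this assignment is \emph{well-defined as a functor on the mesh category}, i.e.\ that it respects the mesh relations: for each non-projective $X$, the element $\sum_i \overline{\alpha_i}\,\overline{\sigma\alpha_i}$ is zero in $k(\Gamma)$, so I must check that $\sum_i f_{\alpha_i} f_{\sigma\alpha_i} = 0$ in $\md A$. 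But this is exactly the content of condition (iii): the composite of the two maps in the almost split sequence is zero by exactness, and $\sum_i f_{\alpha_i} f_{\sigma\alpha_i}$ is precisely that composite $[f_{\alpha_1}\cdots f_{\alpha_r}]\circ[f_{\sigma\alpha_1}\cdots f_{\sigma\alpha_r}]^T$. Hence $F$ descends to the mesh category.

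Having established that $F$ is a well-defined $k$-linear functor, it remains to verify the three conditions in the definition of a Riedtmann functor. Condition (1) holds by construction. For conditions (2) and (3) I must produce \emph{source} and \emph{sink} morphisms at \emph{every} vertex, whereas hypothesis (b) directly supplies this only at non-projective (resp.\ non-injective) vertices via (iii), and at projective (resp.\ injective) vertices via (i) and (ii) it supplies only almost split morphisms. The gap to close is therefore twofold: first, upgrading \emph{almost split} to \emph{minimal almost split} at projective and injective vertices, and second, confirming the sink/source conditions at the remaining vertices not directly named in (i)--(iii). The first is resolved by noting that the radical of the projective $X$ (the source of a sink morphism into a projective) together with irreducibility forces the displayed map to be right minimal; more carefully, any almost split morphism can be reduced to a minimal one by removing a split summand, and irreducibility of each $f_\alpha$ together with the correct count of arrows (the valuation) forbids such a summand, so the map is already minimal. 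The second is handled because at a non-projective vertex $X$, condition (iii) gives the almost split sequence ending at $X$, whose right-hand map is the required sink morphism at $X$ and whose left-hand map is the required source morphism at $\tau X$; as $X$ ranges over all non-projective vertices, $\tau X$ ranges over all non-injective vertices, so every vertex receives both its sink and source morphism from (i)--(iii).

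I expect the main obstacle to be the minimality bookkeeping at projective and injective vertices in the $(b)\Rightarrow(a)$ direction: one must argue that the family $\{f_\alpha\}$, which a priori only yields almost split morphisms there, in fact yields \emph{minimal} ones, and this hinges on interpreting the number of arrows in $\Gamma$ correctly as the valuation (so that no superfluous irreducible summands appear). The rest is a routine but careful matching of the combinatorial data of meshes with the module-theoretic data of almost split sequences, together with the observation that the mesh relations correspond exactly to the vanishing composites in those sequences.
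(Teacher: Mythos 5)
Your proposal is correct and takes essentially the same route as the paper: (a) $\Rightarrow$ (b) by reading off the definitions of a Riedtmann functor, and (b) $\Rightarrow$ (a) by inducing a functor on the path category $k\Gamma$ from $\alpha \mapsto f_{\alpha}$, descending to the mesh category $k(\Gamma)$ because (iii) makes the images of the mesh relations vanish, and then verifying the Riedtmann conditions from (i), (ii), (iii). The one point you elaborate beyond the paper --- upgrading the merely almost split morphisms of (i) and (ii) to \emph{minimal} ones via Krull--Schmidt and the fact that the number of arrows matches the multiplicities in the domain of the sink (resp.\ source) morphism --- is a detail the paper's proof leaves implicit, and your treatment of it is sound.
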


\begin{proof}
    The implication  (a)$ \Rightarrow$ (b) is easy to see from the definitions. For the reverse implication, observe that the morphisms of the path category  $k\Ga$ are generated by arrows, and so  $\alpha \mapsto f_{\alpha}$ naturally induces a functor $F:k\Ga \rightarrow \ind \Ga$. The fact that the morphisms  $f_{\alpha}$ satisfy the mesh relations, as described in (iii), yields that $F$ induces a new functor $F: k(\Ga) \rightarrow \ind \Ga$. Then, (i), (ii) and (iii) guarantee that such a functor is indeed Riedtmann, showing (a).
\end{proof}

The following corollary is an easy consequence of the definition and Theorem~\ref{th:b}. It provides a first explanation of why mesh-comparability is interesting while studying the problem of compositions of irreducible morphisms.

\begin{cor}
\label{cor:composicao_bem_comportada_chosen}
  Let $\Gamma$ be mesh-comparable via $F$ and let $f_1: X_0 \rightarrow X_1, \cdots, f_n: X_{n-1} \rightarrow X_n$  be chosen irreducible morphisms between modules in $\Gamma$. Then either $f_n \cdots f_1 \in \rad^n (X_0,X_n) \setminus \rad^{n+1} (X_0,X_n)$ or $f_n \ldots f_1 = 0$.
\end{cor}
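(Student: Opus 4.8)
The plan is to leverage Theorem~\ref{th:b}, which tells us that the Riedtmann functor $F$ induces, for every pair of vertices and every $n$, a bijection between the graded pieces $\mathcal{R}^n k(\Gamma)(X,Y)/\mathcal{R}^{n+1}k(\Gamma)(X,Y)$ and $\rad^n(X,Y)/\rad^{n+1}(X,Y)$. The key observation is that each chosen morphism $f_i = F(\overline{\alpha_i})$ is the image under $F$ of the class $\overline{\alpha_i}$ of an arrow, which is a homogeneous element of degree $1$ in the graded mesh category $k(\Gamma)$. Since $F$ is a functor, the composite $f_n \cdots f_1$ equals $F(\overline{\alpha_n} \cdots \overline{\alpha_1})$, and the product $\overline{\alpha_n} \cdots \overline{\alpha_1}$ is (the class of) a path of length $n$, hence a homogeneous element of degree exactly $n$ in $k(\Gamma)$.

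The argument then proceeds by a dichotomy on this element of the mesh category. First I would set $p = \overline{\alpha_n} \cdots \overline{\alpha_1} \in \mathcal{R}^n k(\Gamma)(X_0, X_n)$, which lives in degree $n$. There are two cases. If $p \in \mathcal{R}^{n+1}k(\Gamma)(X_0,X_n)$, then since $p$ is homogeneous of degree $n$ and $\mathcal{R}^{n+1}k(\Gamma)$ is generated by classes of paths of length $\geq n+1$ (as noted in the preliminaries), the degree-$n$ component of $p$ must vanish, forcing $p = 0$; hence $f_n \cdots f_1 = F(p) = F(0) = 0$. If instead $p \notin \mathcal{R}^{n+1}k(\Gamma)(X_0,X_n)$, then $p$ represents a nonzero class in the quotient $\mathcal{R}^n k(\Gamma)(X_0,X_n)/\mathcal{R}^{n+1}k(\Gamma)(X_0,X_n)$. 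Applying the bijection of Theorem~\ref{th:b}, the image of this class under $F$ is the class of $f_n \cdots f_1$ in $\rad^n(X_0,X_n)/\rad^{n+1}(X_0,X_n)$, and since the bijection sends the nonzero class to a nonzero class, we conclude $f_n \cdots f_1 \in \rad^n(X_0,X_n) \setminus \rad^{n+1}(X_0,X_n)$.

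The main subtlety to be careful about is matching the graded structure on $k(\Gamma)$ with the radical filtration correctly, namely ensuring that the homogeneous element $p$ cannot acquire degree-$n$ content from the ideal of mesh relations (which are homogeneous of degree $2$, so the ideal $\mathcal{R}^{n+1}$ is generated in degrees $\geq n+1$ and cannot contribute a degree-$n$ term to a homogeneous element). This is exactly what makes the first case collapse cleanly to $p=0$ rather than merely to $p \in \mathcal{R}^{n+1}$. Once the homogeneity argument pins down that the degree-$n$ homogeneous element $p$ is either zero or has nonzero image in the graded piece, the two cases of the statement follow directly, and Theorem~\ref{th:b} does the rest by transporting the dichotomy from $k(\Gamma)$ to $\operatorname{mod} A$.
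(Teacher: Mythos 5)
Your proof is correct and takes essentially the same approach as the paper's: write $f_n\cdots f_1 = F(\overline{\alpha_n}\cdots\overline{\alpha_1})$, use the $\mathbb{N}$-grading of $k(\Gamma)$ to see that the degree-$n$ homogeneous element $\overline{\alpha_n}\cdots\overline{\alpha_1}$ must vanish if it lies in $\Rr^{n+1}k(\Gamma)(X_0,X_n)$, and transport the dichotomy to $\md A$ via the bijection of Theorem~\ref{th:b}. The only difference is presentational: the paper argues contrapositively (assuming $f_n\cdots f_1\in\rad^{n+1}(X_0,X_n)$ and deducing it is zero), whereas you split into cases on the mesh-category side.
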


In another words, the composition of chosen irreducible morphisms is always well-behaved. 

\begin{proof} By definition of chosen irreducible morphisms, for every $i$ between 1 and $n$, there is an arrow $\alpha:X_{i-1} \rightarrow X_i$ such that $f_i = F(\overline{\alpha_i})$. Then $f_n \ldots f_1 = F(\overline{\alpha_n}) \ldots F(\overline{\alpha_1}) = F(\overline{\alpha_n} \ldots\overline{\alpha_1})$. If $f_n \ldots f_1 \in \rad^{n+1}(X_0,X_n)$, then, by Theorem~\ref{th:b}, we would have $\overline{\alpha_n} \ldots\overline{\alpha_1} \in \Rr^{n+1}(X_0,X_n)$, and since $k(\Gamma)$ is $\mathbb{N}$-graded, we will have actually that $\overline{\alpha_n} \ldots\overline{\alpha_1} = 0$, which implies $f_n \ldots f_1 = 0$.
\end{proof}

More generally, we will make use of the following result.

\begin{prop}
\label{prop:depth of F(phi)}
    Let $F:k(\Ga) \rightarrow \ind \Gamma$ be a mesh-comparison for an Auslander-Reiten component $\Gamma$. If $X,Y$ are modules of  $\Gamma$ and $\phi = \sum_{i=1}^t \lambda_1 \overline{\alpha_{i1}} \cdots \overline{\alpha_{in_i}} \in k(\Ga)(X,Y)$ is a non-zero morphism in the mesh category, where  $\lambda_i \in k$ and $\alpha_{ij}$ is an arrow of $\Ga$ for all $i,j$, then, denoting $n = \min\{n_1,\ldots,n_t\}$ and  $m = \max\{n_1,\ldots,n_t\}+1$, we have that $F(\phi) \in \rad^n(X,Y)$ and $F(\phi) \notin \rad^m(X,Y)$.
\end{prop}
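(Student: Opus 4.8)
The plan is to exploit the $\mathbb{N}$-grading on $k(\Gamma)$ together with Theorem~\ref{th:b}, which controls the radical layer by radical layer. First I would decompose $\phi$ into its homogeneous components. Write $\phi = \sum_{j=n}^{m-1} \phi_j$, where $\phi_j$ collects all those summands $\lambda_i \overline{\alpha_{i1}} \cdots \overline{\alpha_{in_i}}$ with $n_i = j$; by the definition of $n$ and $m$, the component $\phi_n$ is nonzero (it contains at least one summand of minimal length $n$) and the component $\phi_{m-1}$ is nonzero (it contains at least one summand of maximal length $m-1$). Each $\phi_j$ is a homogeneous element of degree $j$, hence $\phi_j \in \Rr^j k(\Gamma)(X,Y)$. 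Since $k(\Gamma)$ is $\mathbb{N}$-graded and $\Rr^{j+1}k(\Gamma)$ is the ideal generated by paths of length $\geq j+1$, a nonzero homogeneous element of degree exactly $j$ cannot lie in $\Rr^{j+1}k(\Gamma)$; thus $\phi_j \in \Rr^j k(\Gamma)(X,Y) \setminus \Rr^{j+1}k(\Gamma)(X,Y)$ whenever $\phi_j \neq 0$.

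For the containment $F(\phi) \in \rad^n(X,Y)$, I would argue that each $\phi_j \in \Rr^j k(\Gamma) \subseteq \Rr^n k(\Gamma)$ for $j \geq n$, so $\phi \in \Rr^n k(\Gamma)(X,Y)$, and since $F$ is a functor between the appropriately filtered categories (the radical of the mesh category maps into the radical of the module category, as encoded by Theorem~\ref{th:b}), we obtain $F(\phi) \in \rad^n(X,Y)$. Concretely, each chosen arrow image $F(\overline{\alpha})$ is irreducible, hence lies in $\rad$, so a product of $j$ of them lies in $\rad^j \subseteq \rad^n$; summing over the monomials keeps us in $\rad^n$.

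The substantive part is showing $F(\phi) \notin \rad^m(X,Y)$, equivalently $F(\phi) \notin \rad^{m}(X,Y)$ where $m-1$ is the top degree. The idea is to pass to the quotient $\rad^{m-1}(X,Y)/\rad^m(X,Y)$ and use the bijection of Theorem~\ref{th:b} at level $n' = m-1$. Consider the images of $\phi$ and of its top component $\phi_{m-1}$ in this quotient. All lower components $\phi_j$ with $j < m-1$ already lie in $\rad^{m-1}$... but more carefully: I would instead note that Theorem~\ref{th:b} gives, for each $j$, a bijection $\Rr^j k(\Gamma)(X,Y)/\Rr^{j+1}k(\Gamma)(X,Y) \xrightarrow{\sim} \rad^j(X,Y)/\rad^{j+1}(X,Y)$ induced by $F$. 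Apply this at $j = m-1$. The class of $\phi$ in $\Rr^{m-1}k(\Gamma)/\Rr^m k(\Gamma)$ equals the class of its top-degree component $\phi_{m-1}$, which is nonzero there by the grading argument above. Hence its image under the bijection, namely the class of $F(\phi)$ in $\rad^{m-1}(X,Y)/\rad^m(X,Y)$, is nonzero, which is exactly the statement $F(\phi) \notin \rad^m(X,Y)$.

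The main obstacle I anticipate is the bookkeeping in the last paragraph: one must be careful that $\phi$ itself need not be homogeneous, so to invoke the level-$(m-1)$ bijection one first has to view $\phi$ modulo $\Rr^m k(\Gamma)$ and check that this class coincides with the class of $\phi_{m-1}$ (the lower-degree terms $\phi_j$, $j<m-1$, do \emph{not} vanish modulo $\Rr^m$, so the naive claim that $\phi \equiv \phi_{m-1}$ in $\Rr^{m-1}/\Rr^m$ is false in general). The correct route is to observe that $\Rr^{m-1}k(\Gamma)/\Rr^m k(\Gamma)$ only sees degree-$(m-1)$ information because $\Rr^{m-1}$ is generated by length-$\geq (m-1)$ paths and $\Rr^m$ kills everything of length $\geq m$; thus within $\Rr^{m-1}k(\Gamma)$ the element $\phi$ is congruent modulo $\Rr^m$ precisely to its degree-$(m-1)$ part. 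I would make this congruence explicit before applying the bijection of Theorem~\ref{th:b}, completing the argument.
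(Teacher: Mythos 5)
Your first containment ($F(\phi) \in \rad^n(X,Y)$) is fine and agrees with the paper, which dismisses it as routine. The second half, however, has a genuine gap, in two places. First, your claim that the top homogeneous component $\phi_{m-1}$ is nonzero ``because it contains at least one summand of maximal length $m-1$'' is unjustified: $k(\Gamma)$ is a quotient of the path category by the mesh ideal, so a nontrivial linear combination of classes of paths of equal length can perfectly well vanish --- the mesh relations themselves are such combinations. The hypothesis only says that $\phi \neq 0$ as an element of $k(\Gamma)$; it does not guarantee that the degree-$(m-1)$ part survives. (The proposition remains true when $\phi_{m-1}=0$, but your proof collapses, since you need a nonzero class in $\Rr^{m-1}k(\Gamma)(X,Y)/\Rr^{m}k(\Gamma)(X,Y)$ to feed into Theorem~\ref{th:b}.) Second, even granting $\phi_{m-1}\neq 0$, the element $\phi$ does not belong to $\Rr^{m-1}k(\Gamma)(X,Y)$ whenever it has nonzero components of lower degree, so it has no class at all in $\Rr^{m-1}k(\Gamma)/\Rr^{m}k(\Gamma)$; correspondingly $F(\phi)$ need not lie in $\rad^{m-1}(X,Y)$, so ``the class of $F(\phi)$ in $\rad^{m-1}/\rad^{m}$'' is undefined as well. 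You notice this obstacle yourself, but the repair you propose --- that ``within $\Rr^{m-1}k(\Gamma)$ the element $\phi$ is congruent modulo $\Rr^m$ to its degree-$(m-1)$ part'' --- is not a meaningful statement: the lower-degree terms are not elements of $\Rr^{m-1}k(\Gamma)$ and are not killed modulo $\Rr^{m}k(\Gamma)$, so there is no quotient in which this congruence holds.

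There are two correct routes. The paper argues by contradiction in the opposite direction: if $F(\phi) \in \rad^m(X,Y)$, then the injectivity of the maps induced by $F$ (Theorem~\ref{th:b}, applied successively at levels $0,1,\ldots,m-1$) forces $\phi \in \Rr^{m}k(\Gamma)(X,Y)$; since $\phi$ is a linear combination of homogeneous elements of degree strictly smaller than $m$, while $\Rr^{m}k(\Gamma)$ is the part of the graded category concentrated in degrees at least $m$, this gives $\phi = 0$, contradicting the hypothesis. Alternatively, if you want to keep your homogeneous-component strategy, use the \emph{bottom} component rather than the top one: let $d$ be the smallest degree with $\phi_d \neq 0$ (it exists because $\phi \neq 0$, and $d \leq m-1$). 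Then $\phi \in \Rr^{d}k(\Gamma)(X,Y) \setminus \Rr^{d+1}k(\Gamma)(X,Y)$, so $\phi$ does have a nonzero class at level $d$, and the injectivity of the level-$d$ map of Theorem~\ref{th:b} yields $F(\phi) \in \rad^{d}(X,Y) \setminus \rad^{d+1}(X,Y)$; since $d+1 \leq m$, we conclude $F(\phi) \notin \rad^{m}(X,Y)$, which is even slightly sharper than the stated conclusion.
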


\begin{proof}
    To verify that $F(\phi) \in \rad^n(X,Y)$ is a simple task. Let us show that $F(\phi) \notin \rad^m(X,Y)$. By Theorem~\ref{th:b}, $F$ induces a $k$-linear bijection

$$\frac{\Rr^{m-1} k(\Ga)(X,Y)}{\Rr^m k(\Ga)(X,Y)}\rightarrow \frac{\rad^{m-1}(X,Y)}{\rad^m (X,Y)}$$ 

    Hence, if  $F(\phi) \in \rad^m(X,Y)$, then $\phi \in \Rr^m k(\Ga)(X,Y)$. Since $k(\Ga)$ is an  $\mathbb{N}$-graded category and $\phi = \sum_{i=1}^t \lambda_1 \overline{\alpha_{i1}} \cdots \overline{\alpha_{in_i}} \in k(\Ga)(X,Y)$ is a linear combination of morphisms of degrees strictly smaller than  $m$, the fact that $\phi \in \Rr^m k(\Ga)(X,Y)$ should imply that $\phi = 0$, contrary to the hypothesis of $\phi$ being nonzero. This completes the proof.
\end{proof}

\subsection{First connection with standardness} Recall that an Auslander-Reiten component $\Gamma$ is called {\bf standard} provided there exists an isomorphism of categories $i:k(\Ga) \xrightarrow{\sim} \ind \Gamma$.

\begin{prop}
\label{prop:standard eh mesh-c}
If $\Gamma$ is standard, then there is an isomorphism of categories $F:k(\Gamma) \xrightarrow{\sim} \ind \Gamma$ which is also a Riedtmann functor (i.e, a mesh-comparison). In particular, any standard Auslander-Reiten component is mesh-comparable. 
\end{prop}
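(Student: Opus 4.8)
The plan is to show that the very isomorphism $i \colon k(\Ga) \xrightarrow{\sim} \ind \Ga$ witnessing standardness is already a Riedtmann functor, so that (after a harmless normalisation on objects) we may take $F = i$. First I would deal with condition (1) of the definition of a Riedtmann functor, namely $FX = X$. The object assignment of $i$ is a bijection $\pi$ of the vertex set of $\Ga$; since $i$ is an isomorphism of $k$-categories it carries irreducible morphisms to irreducible morphisms (see below), so $\dim \operatorname{irr}(X,Y)$ is preserved and $\pi$ is an automorphism of the underlying quiver of $\Ga$, indeed of the translation quiver $\Ga$, the translation $\tau$ being determined by the almost split structure that $i$ preserves. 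Such a $\pi$ induces an automorphism $k(\pi)$ of the mesh category $k(\Ga)$, and replacing $i$ by $F := i \circ k(\pi)^{-1}$ yields an isomorphism of categories with $FX = X$ for every vertex $X$; in the usual convention in which $i$ is already the identity on objects this step is vacuous.

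The crucial preliminary is that $F$ respects the radical filtrations. An isomorphism of $k$-linear Krull--Schmidt categories maps the radical of the source onto the radical of the target and commutes with composition, hence sends powers to powers. Since the radical of the mesh category is exactly $\Rr k(\Ga)$ (the ideal generated by the arrows) and the radical of $\ind \Ga$ is $\rad_A$, I would conclude that $F(\Rr^n k(\Ga)(X,Y)) = \rad^n(X,Y)$ for all $n$ and all $X,Y$, and therefore that $F$ induces isomorphisms $\Rr^n k(\Ga)/\Rr^{n+1}k(\Ga) \xrightarrow{\sim} \rad^n/\rad^{n+1}$. In degree one this says that $F$ carries the classes $\overline{\alpha}$ of the arrows, which form a basis of $\Rr k(\Ga)(X,Y)/\Rr^2 k(\Ga)(X,Y)$, to a basis of $\operatorname{irr}(X,Y)$; in particular each $F(\overline{\alpha})$ is an irreducible morphism.

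With this in hand I would verify conditions (2) and (3) by checking hypotheses (i)--(iii) of Proposition~\ref{prop:mesh-c equiv escolha} for the data $f_\alpha := F(\overline{\alpha})$. Fix a vertex $X$ with outgoing arrows $\alpha_1, \dots, \alpha_r$ to $X_1, \dots, X_r$. By the previous paragraph the components $F(\overline{\alpha_j})$ run through bases of the spaces $\operatorname{irr}(X,X_j)$, and since $X$ is indecomposable and the map is radical, the standard characterisation of almost split morphisms by such bases forces $[F(\overline{\alpha_1}) \cdots F(\overline{\alpha_r})]^t$ to be a source morphism; the dual argument gives the sink morphism condition. Finally, for non-projective $X$ the mesh relation maps to $F(0) = 0$, so the two maps of the mesh compose to zero and, the outer maps being source and sink morphisms, assemble into the almost split sequence ending at $X$. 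This establishes (i)--(iii), so $F$ is a Riedtmann functor; being moreover an isomorphism of categories, it is the desired mesh-comparison. I expect the only delicate points to be the normalisation $FX = X$ (equivalently, seeing $\pi$ as a translation-quiver automorphism) and the passage from ``source morphism in $\ind \Ga$'' to ``source morphism in $\md A$'', both circumvented above by working through the radical filtration and the induced bases of the spaces $\operatorname{irr}(X,Y)$.
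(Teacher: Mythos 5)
Your strategy is the same as the paper's (very terse) proof: normalize the object map of $i$ by composing with an automorphism of $k(\Gamma)$, then check the Riedtmann conditions. The paper leaves the check implicit, and your attempt to supply it has a genuine gap at its pivotal step. The claim ``$F(\Rr^n k(\Gamma)(X,Y)) = \rad^n(X,Y)$ for all $n$'' does not follow from the fact that isomorphisms of categories preserve radicals and powers of ideals. What that fact gives is $F(\Rr^n k(\Gamma)(X,Y)) = \rad^n_{\ind \Gamma}(X,Y)$, the $n$-th power of the radical ideal \emph{of the category} $\ind\Gamma$, in which intermediate objects range only over modules of $\Gamma$; but the paper's $\rad^n$ is the restriction of $\rad_A^n$, which allows factorizations through arbitrary $A$-modules, in particular through indecomposables lying in other components. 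In degree $1$ the two ideals coincide, but in degree $2$ you only get the inclusion $\rad^2_{\ind\Gamma}(X,Y) \subseteq \rad_A^2(X,Y)$ for free, and it is the reverse inclusion you need: without it you cannot conclude that $F(\overline{\alpha}) \notin \rad_A^2$, nor that the classes of the $F(\overline{\alpha_{jl}})$ form bases of $\operatorname{irr}(X,X_j) = \rad_A(X,X_j)/\rad_A^2(X,X_j)$, which is exactly what your source/sink criterion consumes. The same issue silently affects your normalization step: to see that the object bijection $\pi$ preserves arrow multiplicities (hence is a translation-quiver automorphism) you already need this degree-$2$ comparison. Finally, the unrestricted claim ``for all $n$'' certainly cannot be formal: if it were, then $\rad^{\infty}(X,Y) = \bigcap_n F(\Rr^n k(\Gamma)(X,Y)) = F\bigl(\bigcap_n \Rr^n k(\Gamma)(X,Y)\bigr) = 0$, and Liu's theorem (Section~\ref{sec:new proof Liu}) as well as Theorem~\ref{th:b} would become one-line corollaries of standardness.

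The gap is repairable, but the repair is an actual idea, not bookkeeping. For the only degrees you need ($n \le 2$) a dimension count works: by the grading, $\dim_k \Rr k(\Gamma)(X,Y)/\Rr^2 k(\Gamma)(X,Y)$ equals the number of arrows $X \to Y$, which by the very definition of the Auslander-Reiten quiver equals $\dim_k \rad_A(X,Y)/\rad_A^2(X,Y)$; since $F$ identifies $\Rr k(\Gamma)(X,Y)/\Rr^2 k(\Gamma)(X,Y)$ with $\rad_A(X,Y)/\rad^2_{\ind\Gamma}(X,Y)$ and $\rad^2_{\ind\Gamma}(X,Y) \subseteq \rad_A^2(X,Y) \subseteq \rad_A(X,Y)$, finite-dimensionality forces $\rad^2_{\ind\Gamma}(X,Y) = \rad_A^2(X,Y)$. (Alternatively, one can prove $\rad^n_{\ind\Gamma} = \rad_A^n$ on $\Gamma$ for every finite $n$ directly, by factoring any radical morphism $Z \to Y$ with $Z$ indecomposable outside $\Gamma$ through the sink morphism of $Y$, whose indecomposable summands all lie in $\Gamma$.) You should also justify, rather than assert, that the categorical radical of $k(\Gamma)$ equals $\Rr k(\Gamma)$: this uses the grading together with the finite-dimensionality of the $\Hom$-spaces of $k(\Gamma)$, which here is transported along $i$. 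With these insertions, your verification of (i)--(iii) of Proposition~\ref{prop:mesh-c equiv escolha} goes through and fills in what the paper's proof leaves unsaid.
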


\begin{proof}
Let $i$ be an isomorphism of categories. Then $i$ is not necessarily the identity on objects and so, it is not necessarily a Riedtmann functor. By considering a suitable automorphism $G: k(\Gamma) \rightarrow k(\Gamma)$, we construct the composition $F = i \circ G: k(\Gamma) \rightarrow \ind \Gamma$, which is an isomorphism of categories and also a Riedtmann functor. 
\end{proof}

We have given our first examples of mesh-comparable components with Proposition~\ref{prop:easy exs mesh-c}, and the result above adds more examples to that list: since finite directed components, preprojective and postprojective components, and connecting components of tilted algebras are all standard (see, e.g., \cite{Rin}), we immediately obtain that all these are mesh-comparable too.

The next two results will give a better understanding of how close a mesh-comparison is from being an isomorphism (what would imply standardness).

\begin{prop}
\label{prop:fiel e denso}
   Any Riedtmann functor $F: k(\Gamma) \rightarrow \ind \Gamma$ is faithful and dense.             
\end{prop}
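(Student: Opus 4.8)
The plan is to handle the two properties separately: density will be essentially immediate from the definition of a Riedtmann functor, while faithfulness will rest on the filtration-to-associated-graded comparison supplied by Theorem~\ref{th:b}. For density, I would recall that $FX = X$ for every vertex $X$ of $\Gamma$. The objects of $\ind\Gamma$ are precisely the indecomposable modules lying in the component $\Gamma$, and these are in bijection with the vertices of $\Gamma$, which are exactly the objects of $k(\Gamma)$. Hence $F$ is surjective on objects (under this identification), and in particular dense; no further argument is needed here.

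For faithfulness I must show that, for each pair of vertices $X,Y$, the $k$-linear map $F\colon k(\Gamma)(X,Y) \to \Hom_A(X,Y)$ is injective. The first step is to record that the radical filtration of the mesh category is separated, that is, $\bigcap_{n\geq 0}\Rr^n k(\Gamma)(X,Y)=0$. This is the same grading argument already used in Proposition~\ref{prop:depth of F(phi)}: since $k(\Gamma)$ is $\mathbb{N}$-graded and every morphism is a finite linear combination of classes of paths, it has a well-defined top degree $d$; as $\Rr^{d+1}k(\Gamma)(X,Y)$ consists of elements all of whose homogeneous components have degree strictly greater than $d$, membership in it forces the morphism to vanish.

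With separatedness in hand, I would run the standard filtered-to-graded argument. Given a nonzero $\phi \in k(\Gamma)(X,Y)$, the previous step guarantees a largest $n$ with $\phi \in \Rr^n k(\Gamma)(X,Y)$, so $\phi \notin \Rr^{n+1}k(\Gamma)(X,Y)$ and the class of $\phi$ in $\Rr^n k(\Gamma)(X,Y)/\Rr^{n+1}k(\Gamma)(X,Y)$ is nonzero. By Theorem~\ref{th:b} the induced map onto $\rad^n(X,Y)/\rad^{n+1}(X,Y)$ is a bijection, hence injective, so the class of $F(\phi)$ in that quotient is again nonzero; in particular $F(\phi)\notin \rad^{n+1}(X,Y)$, and therefore $F(\phi)\neq 0$. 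Thus the kernel of $F$ on each hom-set is trivial, i.e.\ $F$ is faithful. Equivalently, one may simply invoke Proposition~\ref{prop:depth of F(phi)} and note that $F(\phi)\notin\rad^m(X,Y)$ already forces $F(\phi)\neq 0$. The substance of the argument is carried entirely by Theorem~\ref{th:b}; the only point demanding any care is the separatedness $\bigcap_n \Rr^n k(\Gamma)(X,Y)=0$, and since this follows routinely from the $\mathbb{N}$-grading, I do not anticipate a genuine obstacle.
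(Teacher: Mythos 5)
Your proof is correct and takes essentially the same route as the paper: density is immediate from $FX = X$ on objects, and faithfulness rests on Theorem~\ref{th:b} together with the $\mathbb{N}$-grading of $k(\Gamma)$. The paper merely packages the separatedness-plus-graded-comparison argument you spell out by directly citing Proposition~\ref{prop:depth of F(phi)}, the shortcut you yourself note at the end.
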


\begin{proof}
    Since $F$ works as the identity on the objects, it is clearly dense. It remains to show that it is faithful. For that, let $\phi \in k(\Ga)(X,Y)$ be a morphism in the mesh category between $X$ and $Y$ such that $F(\phi) = 0$. Because of the Proposition~\ref{prop:depth of F(phi)}, either there is an $m$ such that $F(\phi) \notin \rad^m(X,Y)$, or $\phi = 0$. Since $F(\phi) = 0$, the first case cannot happen, showing that $\phi = 0$ and so that $F$ is faithful.
\end{proof}

\begin{cor}
\label{cor:F iso F pleno}
    Let  $F: k(\Gamma) \rightarrow \ind \Gamma$ be a Riedtmann functor. The following statements are equivalent:

    \begin{enumerate}
        \item[(a)] $F$ is an isomorphism (and hence  $\Gamma$ is standard)
        \item[(b)] each radical morphism between modules of $\Gamma$ is a linear combination of compositions of $F$-chosen morphisms. 
        \item[(c)] $F$ is full.
    \end{enumerate}
\end{cor}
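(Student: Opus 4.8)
The plan is to prove the equivalence of (a), (b), (c) for a Riedtmann functor $F : k(\Ga) \to \ind \Ga$ by establishing the cycle of implications, leaning heavily on Proposition~\ref{prop:fiel e denso}, which already tells us that $F$ is faithful and dense. Since a functor which is faithful, full and dense is automatically an isomorphism of categories onto $\ind \Ga$ (fullness and faithfulness give bijectivity on all Hom-sets, and denseness together with $FX = X$ handles the objects), the core of the argument reduces to relating fullness to the two other conditions.

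\begin{proof}
We prove the chain of implications (a)$\Rightarrow$(b)$\Rightarrow$(c)$\Rightarrow$(a).

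(a)$\Rightarrow$(b): Suppose $F$ is an isomorphism. Given a radical morphism $g \colon X \rightarrow Y$ between modules of $\Gamma$, since $F$ is an isomorphism, there is a unique $\phi \in k(\Ga)(X,Y)$ with $F(\phi) = g$. As $g$ is radical, $\phi$ has no component of degree $0$, so $\phi = \sum_i \lambda_i \overline{\alpha_{i1}} \cdots \overline{\alpha_{in_i}}$ is a linear combination of classes of paths of positive length. Applying $F$ and using that $F(\overline{\alpha}) = f_{\alpha}$ is the $F$-chosen morphism for each arrow $\alpha$, together with functoriality, we get that $g = F(\phi)$ is a linear combination of compositions of $F$-chosen morphisms.

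(b)$\Rightarrow$(c): We must show $F$ is full, i.e., every morphism $g \colon X \rightarrow Y$ in $\ind \Ga$ lies in the image of $F$. Write $g = g_0 + g_r$, where $g_0$ is the semisimple (degree-$0$) part and $g_r \in \rad(X,Y)$; since $X,Y$ are indecomposable, $g_0$ is a scalar multiple of the identity (nonzero only if $X = Y$), which is clearly in the image of $F$. By hypothesis (b), the radical part $g_r$ is a linear combination of compositions of $F$-chosen morphisms $f_{\alpha} = F(\overline{\alpha})$; by functoriality of $F$, each such composition equals $F$ applied to the corresponding product of classes of arrows, so $g_r$, and hence $g$, lies in the image of $F$.

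(c)$\Rightarrow$(a): By Proposition~\ref{prop:fiel e denso}, $F$ is faithful and dense. Assuming (c), $F$ is also full. A $k$-linear functor that is full, faithful and dense, and which is the identity on objects, is an isomorphism of categories: it is bijective on objects since $FX = X$, and bijective on each Hom-set since it is full and faithful. Hence $F$ is an isomorphism of categories, and therefore $\Gamma$ is standard.
\end{proof}

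I expect the main subtlety to be the bookkeeping in (b)$\Rightarrow$(c): one must be careful to separate the degree-$0$ component of a morphism between indecomposables (a scalar multiple of the identity, which is trivially in the image because $F$ fixes identities) from the radical part, so that hypothesis (b) applies exactly to $g_r$. Everything else is formal, since faithfulness and denseness are handed to us by the preceding proposition, leaving only the elementary observation that a full, faithful, dense, identity-on-objects functor is an isomorphism.
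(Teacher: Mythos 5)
Your proof is correct and follows essentially the same route as the paper: the same cycle (a)$\Rightarrow$(b)$\Rightarrow$(c)$\Rightarrow$(a), the same splitting of a non-radical morphism between indecomposables into a scalar multiple of the identity plus a radical part (using that $k$ is algebraically closed), and the same appeal to Proposition~\ref{prop:fiel e denso} together with $F$ being the identity on objects for (c)$\Rightarrow$(a). Your (a)$\Rightarrow$(b) is in fact slightly more detailed than the paper's, which dismisses it as immediate from the definitions.
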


\begin{proof}
  (a) $\Rightarrow $ (b) follows easily from the definitions. 
    
  (b) $ \Rightarrow $ (c):  Let  $f$ be a morphism in $ \Hom(X,Y)$, with $X$ and $Y$ belonging to $\Gamma$. If $f$ is radical, then $f = \sum_{i=1}^t \lambda_i F(\overline{\alpha_{i1}})\cdots F(\overline{\alpha_{in_i}})$, where each $\alpha_{ij}$ is an arrow of $\Gamma$. Then it follows that $f =  F \left( \sum_{i=1}^t \lambda_i \overline{\alpha_{i1}}\cdots \overline{\alpha_{in_i}}\right)$, that is, $f$ is the image of a morphism via $F$.

    If $f$ is not radical, then it is an isomorphism $X \cong Y$ (because both $X$ and $Y$ are indecomposable), and since $k$ is algebraically closed, there exists  $\lambda \in k$ such that $f - \lambda. 1_X \in \rad(X,X)$. Therefore $f - F(\lambda.1_X) \in \rad(X,X)$ and using the first part we conclude that $f$  is the image of a morphism via $F$. 
    
    In order to show (c) $ \Rightarrow $ (a), it is enough to apply Proposition~\ref{prop:fiel e denso} and the fact that $F$ acts as the identity on the objects. 
\end{proof}

\section{Decomposing a morphism in its parts}
\label{sec:decomposition}

By the existence of a mesh-comparison, we can establish a decomposition of any morphism between indecomposable modules, in a manner which we now explain. From that decomposition, we can study interesting properties of the morphisms, such as depth or being (finitely) generated by chosen morphisms.

\subsection{The decomposition}

We need a preparatory lemma. 

\begin{lem}
\label{lem:almost infty}
Let  $\Gamma$ be an Auslander-Reiten component and assume that it is mesh-comparable via $F$. If  $f:X \rightarrow Y$ is an irreducible morphism between modules of $\Gamma$, then there exists a morphism $\phi \in k(\Gamma)(X,Y)$ such that $f - F(\phi) \in \rad^{\infty}(X,Y)$.
\end{lem}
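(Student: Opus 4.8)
The plan is to exploit the fact, established in Proposition~\ref{prop:depth of F(phi)}, that the image $F(\phi)$ of a homogeneous degree-$d$ element of $k(\Gamma)$ lands precisely in $\rad^d \setminus \rad^{d+1}$, together with the bijection of Theorem~\ref{th:b}. An irreducible morphism $f:X\rightarrow Y$ lies in $\rad(X,Y)\setminus\rad^2(X,Y)$, so the strategy is to peel off the radical filtration one degree at a time, building $\phi$ as a sum of homogeneous pieces of increasing degree so that $f-F(\phi)$ is pushed deeper and deeper into the radical, with the limit residing in $\rad^\infty$.

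Concretely, I would construct $\phi$ inductively. First, since $f\in\rad(X,Y)$, Theorem~\ref{th:b} (applied with $n=1$) gives a bijection between $\mathcal{R}k(\Gamma)(X,Y)/\mathcal{R}^2 k(\Gamma)(X,Y)$ and $\rad(X,Y)/\rad^2(X,Y)$, so there is a homogeneous element $\phi_1$ of degree $1$ (a linear combination of classes of arrows $X\rightarrow Y$) with $f-F(\phi_1)\in\rad^2(X,Y)$. Inductively, suppose I have found $\phi_1,\ldots,\phi_{n-1}$ with $\phi_i$ homogeneous of degree $i$ and $f-F(\phi_1+\cdots+\phi_{n-1})\in\rad^n(X,Y)$. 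Applying Theorem~\ref{th:b} with this $n$ to the residual morphism, I obtain a homogeneous degree-$n$ element $\phi_n\in\mathcal{R}^n k(\Gamma)(X,Y)$ such that $f-F(\phi_1+\cdots+\phi_n)\in\rad^{n+1}(X,Y)$. This produces an infinite sequence of homogeneous corrections.

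The main obstacle is that this naively yields an \emph{infinite} formal sum $\sum_{i\ge 1}\phi_i$, whereas morphisms in the mesh category $k(\Gamma)(X,Y)$ are finite linear combinations of paths. The resolution is local finiteness of the translation quiver: for fixed vertices $X$ and $Y$, the number of paths from $X$ to $Y$ of any given length is finite, but more importantly I would argue that paths of sufficiently large length contribute nothing. Since $\Gamma$ is locally finite and a component of an Auslander--Reiten quiver, the homogeneous component of degree $d$ of $k(\Gamma)(X,Y)$ — equivalently $\mathcal{R}^d k(\Gamma)(X,Y)/\mathcal{R}^{d+1}k(\Gamma)(X,Y)$ — is a finite-dimensional $k$-space, and by Theorem~\ref{th:b} it is isomorphic to $\rad^d(X,Y)/\rad^{d+1}(X,Y)$. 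For $d$ large this quotient stabilizes to $0$ in the relevant sense only if the radical filtration is eventually constant, which is exactly the situation captured by $\rad^\infty$. Thus once the degree exceeds the (finite) maximal length of a path from $X$ to $Y$ that survives in $k(\Gamma)$, the correction term $\phi_d$ can be taken to be $0$, and the sum terminates.

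Therefore the plan concludes as follows: let $N$ be the maximal length of a nonzero path-class from $X$ to $Y$ in $k(\Gamma)$ (finite by local finiteness of $\Gamma$, since beyond this length all path-classes vanish by the grading), and set $\phi=\phi_1+\cdots+\phi_N\in k(\Gamma)(X,Y)$. By the inductive construction $f-F(\phi)\in\rad^{n}(X,Y)$ for every $n\ge N+1$ — since any further correction $\phi_n$ with $n>N$ is forced to be zero, the residual morphism already lies in $\rad^{n}$ for all such $n$ — and hence $f-F(\phi)\in\bigcap_{n\ge 0}\rad^n(X,Y)=\rad^\infty(X,Y)$, as required. I expect the delicate point to be justifying rigorously that the process stabilizes, i.e.\ that for $d>N$ the class of $f-F(\phi_1+\cdots+\phi_{d-1})$ in $\rad^d/\rad^{d+1}$ is hit by a \emph{zero} element of the mesh category; this is where the $\mathbb{N}$-grading of $k(\Gamma)$ and the finiteness of the number of paths of bounded length between two fixed vertices do the essential work.
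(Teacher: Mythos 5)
Your iterative construction via the surjectivity in Theorem~\ref{th:b} is exactly the paper's mechanism, but your termination argument has a genuine gap. You define $N$ as ``the maximal length of a nonzero path-class from $X$ to $Y$ in $k(\Gamma)$'' and claim it is finite ``by local finiteness of $\Gamma$, since beyond this length all path-classes vanish by the grading.'' Neither reason works: local finiteness only bounds the \emph{number} of paths of each fixed length, not the lengths themselves, and the $\mathbb{N}$-grading does not force anything to vanish. In a translation quiver containing oriented cycles (for instance a tube) there are paths of arbitrarily large length between two fixed vertices, and there is no purely combinatorial reason why their classes in the mesh category must be zero; so the finiteness of $N$ cannot be extracted from the quiver alone. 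It \emph{is} true for a mesh-comparable Auslander-Reiten component, but only because Theorem~\ref{th:b} identifies the degree-$d$ homogeneous component $\Rr^d k(\Gamma)(X,Y)/\Rr^{d+1}k(\Gamma)(X,Y)$ with $\rad^d(X,Y)/\rad^{d+1}(X,Y)$ and $\Hom_A(X,Y)$ is finite-dimensional --- that is, by exactly the module-side finiteness your argument avoids invoking, which makes the step circular as written.

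The repair --- and the paper's actual argument --- is to obtain the stopping point from the module side at the outset: since $\Hom_A(X,Y)$ is a finite-dimensional $k$-vector space, the descending chain $\rad^n(X,Y)$ stabilizes, so there exists $m \in \mathbb{N}$ with $\rad^m(X,Y)=\rad^{\infty}(X,Y)$. One then runs your iteration only $m$ times, producing $\phi_0,\ldots,\phi_{m-1}$ with $\phi_l \in \Rr^l k(\Gamma)(X,Y)$ and $f-F(\phi_0)-\cdots-F(\phi_{m-1})\in\rad^m(X,Y)=\rad^{\infty}(X,Y)$, and takes $\phi=\phi_0+\cdots+\phi_{m-1}$, a finite sum and hence a legitimate morphism of $k(\Gamma)(X,Y)$. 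With this replacement your proof becomes the paper's; as it stands, the step that makes your sum finite is unsupported.
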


\begin{proof}
   Let  $m \in \mathbb{N}$ be such that $\rad^m(X,Y) = \rad^{\infty}(X,Y)$. Because of Theorem~\ref{th:b}, we know that the following map induced by $F$
$$\frac{\Rr^l k(\Ga)(X,Y)}{\Rr^{l+1}k(\Ga)(X,Y)}\rightarrow \frac{\rad^l(X,Y)}{\rad^{l+1}(X,Y)}$$
is surjective for all $l \geq 0$. Hence, there exists  $\phi_0 \in k(\Ga)(X,Y)$ such that $f-F(\phi_0) \in \rad(X,Y)$. In the same way, there exists $\phi_1 \in \Rr k(\Ga)(X,Y)$ such that $f-F(\phi_0)-F(\phi_1) \in \rad^2(X,Y)$, and so recursively, we build for each $0 \leq l < m$ morphisms $\phi_l \in \Rr^l k(\Ga)(X,Y)$ such that  $f-F(\phi_0)-F(\phi_1)-\cdots - F(\phi_{m-1}) \in \rad^m(X,Y) = \rad^{\infty}(X,Y)$. Therefore, it suffices to take  $\phi = \phi_0 + \cdots + \phi_{m-1}$ to conclude the proof. 
\end{proof}

Now we establish the promised decomposition of a morphism in its parts.

\begin{prop}
\label{prop:decomposicao}
Let $\Gamma$ be an Auslander-Reiten component, mesh-comparable via $F$, and consider a morphism $f:X \rightarrow Y$ between modules of $\Gamma$. Then there exists a decomposition (with finite support) 
$$f = f_0 + f_1 + \cdots + f_n + \cdots + f_{\infty}$$
satisfying the following:
\begin{enumerate}
    \item[(i)] $f_0$ is either zero or an isomorphism;
    \item[(ii)] for each finite $n>0$, the morphism $f_n$ is either zero or a linear combination of morphisms of the form  $F(\overline{\alpha_n})\cdots F(\overline{\alpha_1})$, where $X = X_0 \xrightarrow{\alpha_1} X_1 \xrightarrow{\alpha_2} \cdots \xrightarrow{\alpha_n} X_n = Y$ is a path in $\Gamma$; and
    \item[(iii)] $f_{\infty} \in \rad^{\infty}(X,Y)$.
\end{enumerate}
Moreover, such decomposition is unique, that is, if $f = g_0 + g_1 + \cdots + g_{\infty}$ is a decomposition with the same properties of the first, then $f_n = g_n$ for all $0 \leq n \leq \infty$.
\end{prop}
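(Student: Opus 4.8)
The plan is to build the finite parts $f_0, f_1, \ldots$ inductively using the surjectivity half of Theorem~\ref{th:b}, in exactly the spirit of Lemma~\ref{lem:almost infty}, and then collect the tail into $f_\infty$. First I would split off $f_0$: since $X,Y$ are indecomposable, either $f \in \rad(X,Y)$, in which case I set $f_0 = 0$, or $f$ is an isomorphism (forcing $X \cong Y$), in which case—as in the proof of Corollary~\ref{cor:F iso F pleno}—I choose $\lambda \in k$ with $f - \lambda \cdot 1_X \in \rad(X,X)$ and set $f_0 = \lambda \cdot 1_X = F(\lambda \cdot 1_X)$. In either case $f - f_0 \in \rad(X,Y)$. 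Then, writing $m$ for an integer with $\rad^m(X,Y) = \rad^\infty(X,Y)$, I recursively produce for each $1 \le n < m$ a morphism $\phi_n \in \Rr^n k(\Ga)(X,Y)$ such that $(f - f_0) - F(\phi_1) - \cdots - F(\phi_n) \in \rad^{n+1}(X,Y)$, using that $F$ induces a surjection $\Rr^n k(\Ga)/\Rr^{n+1} k(\Ga) \to \rad^n/\rad^{n+1}$. Setting $f_n = F(\phi_n)$—which by expanding $\phi_n$ as a linear combination of paths of length $\ge n$, modulo $\Rr^{n+1}$, can be taken to be a combination of length-$n$ paths—and $f_\infty = f - f_0 - f_1 - \cdots - f_{m-1} \in \rad^m(X,Y) = \rad^\infty(X,Y)$ gives the required decomposition with $f_n = 0$ for $n \ge m$, so the support is finite.

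**Uniqueness.**
For uniqueness, suppose $f = g_0 + g_1 + \cdots + g_\infty$ is a second such decomposition. I would argue by comparing the two decompositions degree by degree, reading off each part from the filtration of $\rad$. The key observation is that each finite part $f_n$ (for $n \ge 1$) is a linear combination of compositions of exactly $n$ chosen morphisms, hence lies in $\rad^n(X,Y)$, and moreover its class in $\rad^n/\rad^{n+1}$ is determined: by Proposition~\ref{prop:depth of F(phi)}, a nonzero $f_n = F(\phi_n)$ with $\phi_n$ homogeneous of degree $n$ satisfies $f_n \in \rad^n \setminus \rad^{n+1}$, so $f_n$ is not absorbed into higher parts. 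Subtracting the two decompositions, $(f_0 - g_0) + (f_1 - g_1) + \cdots + (f_\infty - g_\infty) = 0$. Since $f_0, g_0$ are the only parts not in $\rad(X,Y)$, reducing modulo $\rad$ forces $f_0 = g_0$. Then, assuming inductively $f_j = g_j$ for $j < n$, the relation reduced modulo $\rad^{n+1}$ leaves only $(f_n - g_n)$ as a class in $\rad^n/\rad^{n+1}$ equal to zero; invoking Theorem~\ref{th:b} (the injectivity half) to pull this back to the mesh category, the homogeneous degree-$n$ components of $\phi_n$ and $\psi_n$ agree, whence $f_n = g_n$. The remaining tails then satisfy $f_\infty = g_\infty$.

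**Main obstacle.**
The step I expect to require the most care is the uniqueness argument, specifically pinning down that the finite parts are genuinely separated by the radical filtration. The subtlety is that a finite part $f_n$ is a morphism in $\md A$, and a priori two different linear combinations of length-$n$ compositions of chosen morphisms could differ by something in $\rad^{n+1}$; what rescues this is that $f_n$ is stipulated to be (the image under $F$ of) a homogeneous degree-$n$ element of $k(\Ga)$, so Theorem~\ref{th:b} and the $\mathbb{N}$-grading of $k(\Ga)$ together guarantee that the class of $f_n$ in $\rad^n/\rad^{n+1}$ determines $f_n$ uniquely. I would state this carefully, making explicit that the decomposition is unique as a sum of images of homogeneous elements, so that the grading of $k(\Ga)$ can be transported faithfully (via Theorem~\ref{th:b} and Proposition~\ref{prop:fiel e denso}) to the radical layers of $\Hom_A(X,Y)$.
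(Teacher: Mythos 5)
Your proposal is correct and follows essentially the same route as the paper: existence by the recursive lifting along the surjections of Theorem~\ref{th:b} (the paper packages this as Lemma~\ref{lem:almost infty} and then splits $F(\phi)$ by degree), and uniqueness by comparing the decompositions layer by layer, using the injectivity half of Theorem~\ref{th:b} together with the $\mathbb{N}$-grading of $k(\Gamma)$ to force each homogeneous difference to vanish. Your closing remark about the grading being what prevents two degree-$n$ combinations from differing by an element of $\rad^{n+1}$ is precisely the point the paper's uniqueness argument turns on.
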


\begin{proof}
    Because of  Lemma~\ref{lem:almost infty}, there exists  $\phi \in k(\Ga)(X,Y)$ such that $f-F(\phi) \in \rad^{\infty}(X,Y)$. So we take $f_{\infty} = f-F(\phi)$. Furthermore, by the  $k$-linearity of $F$, $F(\phi)$ can be decomposed as a sum with finite support $f_0 +f_1+ \cdots$, where each $f_n$ is as in the statement. This completes the proof of the existence. 
   
Let us prove the uniqueness. If  $f = f_0 + f_1 + \cdots + f_{\infty} = g_0 + g_1 + \cdots +g_{\infty}$ are as in the statement, then  $f_0 -g_0 \in \rad(X,Y)$. Since $X$ and $Y$ are indecomposable, $\rad(X,Y)$ has codimension either 0 or 1 in $\Hom(X,Y)$, and from that we conclude that $f_0 = g_0$.

Furthermore, we have  $f_1 + f_2 + \cdots + f_{\infty} = g_1 + g_2 + \cdots +g_{\infty}$, and so $f_1 -g_1 \in \rad^2(X,Y)$. Write $f_1 = \lambda_1 F(\overline{\alpha_1}) + \cdots + \lambda_m F(\overline{\alpha_m})$ and $g_1 = \mu_1 F(\overline{\beta_1}) + \cdots + \mu_l F(\overline{\beta_l})$, where the $\lambda_i$'s, $\mu_i$'s are scalars and the $\alpha_i$'s, $\beta_i$'s are arrows. Then 
$$\lambda_1 F(\overline{\alpha_1}) + \cdots + \lambda_m F(\overline{\alpha_m}) - \mu_1 F(\overline{\beta_1}) - \cdots - \mu_l F(\overline{\beta_l}) = F(\lambda_1 \overline{\alpha_1} + \cdots + \lambda_m \overline{\alpha_m} - \mu_1 \overline{\beta_1} - \cdots - \mu_l \overline{\beta_l})$$ 
lies in $\rad^2(X,Y). $
Because of the injectivity of the maps induced by $F$ (Theorem~\ref{th:b}), it follows that  
$$\lambda_1 \overline{\alpha_1} + \cdots + \lambda_m \overline{\alpha_m} - \mu_1 \overline{\beta_1} - \cdots - \mu_l \overline{\beta_l}\ \  \in\ \ \mathcal{R}^2 k(\Ga)(X,Y),$$ 
and, since this is a linear combination of the homogeneous elements of degree one in the  $\mathbb{N}$-graded category  $k(\Ga)$, we must have
$$\lambda_1 \overline{\alpha_1} + \cdots + \lambda_m \overline{\alpha_m} - \mu_1 \overline{\beta_1} - \cdots - \mu_l \overline{\beta_l} =0. $$ 
Which yields $f_1 = g_1$.
    
Using similar arguments, one can show that, for each $n$, $f_n = g_n$. Since the sums $f_0 + f_1 + \cdots + f_{\infty} $ and  $ g_0 + g_1 + \cdots +g_{\infty}$ have finite support, we also obtain that  $f_{\infty} = g_{\infty}$, concluding the proof. 
\end{proof}

\begin{obs}
It is important to emphasize that the uniqueness of the decomposition stated in Proposition~\ref{prop:decomposicao} is on the morphisms $f_n$ and not on the representation of each of them as linear combinations of images under $F$ of paths in $\Gamma$. By the way, such uniqueness will not hold in general due to the mesh relations, as we can see from the next example.
\end{obs}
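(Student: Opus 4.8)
The observation makes two claims. The first --- that the uniqueness in Proposition~\ref{prop:decomposicao} concerns the summands $f_n$ and not their expressions as linear combinations of images of paths --- is already built into that proposition, so the substance to be justified is the second: that the expression of a single $f_n$ as a linear combination of morphisms $F(\overline{\alpha_n})\cdots F(\overline{\alpha_1})$ is in general not unique, and that this failure is caused by the mesh relations. The plan is to reduce the question to linear (in)dependence of path-classes inside $k(\Gamma)$ and then to exhibit an explicit dependency coming from a single mesh, which is what the following example realizes concretely.

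First I would record the reduction. Since $F$ is faithful (Proposition~\ref{prop:fiel e denso}), two expressions $\sum_i \lambda_i F(\overline{p_i})$ and $\sum_j \mu_j F(\overline{q_j})$ of one and the same morphism, where the $p_i,q_j$ are paths of length $n$, coincide precisely when $\sum_i \lambda_i \overline{p_i} = \sum_j \mu_j \overline{q_j}$ already holds in $k(\Gamma)$. Hence the expression of $f_n$ fails to be unique exactly when the classes of length-$n$ paths spanning the relevant homogeneous component of $k(\Gamma)$ are linearly dependent, and the only source of such dependencies is the ideal of mesh relations, which is homogeneous of degree $2$ and therefore produces genuine dependencies in every degree $n \geq 2$.

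To make this concrete I would extract the dependency directly from a mesh. Let $X$ be non-projective with mesh given by arrows $\sigma\alpha_i \colon \tau X \to X_i$ and $\alpha_i \colon X_i \to X$, $1 \le i \le r$, with $r \ge 2$. The mesh relation reads $\sum_{i=1}^r \overline{\alpha_i}\,\overline{\sigma\alpha_i} = 0$ in $k(\Gamma)$; applying $F$ turns this into $\sum_{i=1}^r f_{\alpha_i} f_{\sigma\alpha_i} = 0$, which is simply the vanishing of the composite of the two maps of the almost split sequence of Proposition~\ref{prop:mesh-c equiv escolha}(iii). Consequently the single morphism $g = f_{\alpha_1}f_{\sigma\alpha_1}\colon \tau X \to X$ admits the two distinct length-$2$ expressions
$$g = F(\overline{\alpha_1})F(\overline{\sigma\alpha_1}) = -\sum_{i=2}^r F(\overline{\alpha_i})F(\overline{\sigma\alpha_i}),$$
which involve different paths and different coefficients. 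I expect the only genuine point to verify in a specific component --- and the reason the assertion is illustrated by an explicit example rather than stated abstractly --- to be non-degeneracy: one must produce a component in which $g \neq 0$, so that the two expressions represent a genuine, nonzero morphism and not the trivial identity $0 = 0$. This non-degeneracy is exactly what is secured by the component displayed in the next example, where the relevant composite of chosen morphisms is nonzero.
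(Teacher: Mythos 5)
Your proposal is correct, and at its core it is the same mechanism the paper uses: the remark is justified in the paper solely by the subsequent example (the algebra $1 \leftarrow 2 \leftarrow 3$, where the chosen morphisms $i,j,p,-q$ satisfy the mesh relation $jp + (-q)i = 0$, so that $f = jp = -(-q)i$ has two distinct length-$2$ expressions), and that example is exactly the $r=2$ instance of your general mesh-dependency template $F(\overline{\alpha_1})F(\overline{\sigma\alpha_1}) = -\sum_{i=2}^r F(\overline{\alpha_i})F(\overline{\sigma\alpha_i})$. What you add beyond the paper is genuine content in two respects. First, the reduction via faithfulness (Proposition~\ref{prop:fiel e denso}) proves the stronger assertion that mesh relations are the \emph{only} source of non-uniqueness, since distinct paths form a basis of each homogeneous component of $k\Gamma$ and the relation ideal is homogeneous; the paper merely asserts the ``due to the mesh relations'' clause and illustrates it. (Your aside that dependencies arise ``in every degree $n \geq 2$'' is a slight overstatement --- it requires suitable paths extending a mesh in that degree --- but a single degree suffices for the remark.) Second, and in the other direction, you are more cautious than necessary on non-degeneracy: the condition $g = f_{\alpha_1}f_{\sigma\alpha_1} \neq 0$ does not actually need to be checked in a concrete component. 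For $r \geq 2$ the class $\overline{\alpha_1}\,\overline{\sigma\alpha_1}$ is nonzero in $k(\Gamma)$ (the degree-$2$ component of the mesh ideal from $\tau X$ to $X$ is spanned by the single relation $\sum_{i=1}^r \overline{\alpha_i}\,\overline{\sigma\alpha_i}$, of which an individual path is not a scalar multiple), so Proposition~\ref{prop:depth of F(phi)} gives $g \notin \rad^3$, in particular $g \neq 0$, automatically in any mesh-comparable component containing a mesh with at least two middle arrows. With that observation your argument becomes fully self-contained and strictly more general than the paper's single example, whose role is then purely illustrative.
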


\begin{ex}
   Let $A$ be the path algebra given by the quiver $1 \leftarrow 2 \leftarrow 3$. Its Auslander-Reiten quiver is 

    \begin{displaymath}
        \xymatrix{P_1 \ar[dr] \ar@{.}[rr]&& S_2 \ar[dr]^{j} \ar@{.}[rr]&& I_3 \\
        & P_2 \ar[dr]_{i} \ar[ur]^{p}\ar@{.}[rr]&& I_2 \ar[ur]& \\
        && P_3 \ar[ur]_{q} &&}
    \end{displaymath}
where $P_l, I_l$ and $S_l$, respectively, stand for the indecomposable projective, indecomposable injective and simple module associated with a vertex $l = 1,2 \text{ or } 3$, the maps $i,j$ are canonical inclusions and $p,q$ are canonical projections. Observe that, by choosing the correct irreducible morphisms, it is possible to construct a mesh-comparison over $\Gamma$(mod$A$) in such a way that $i,j,p$ and $-q$ are chosen irreducible morphisms. 

Now define  $f \doteq jp = -(-q)i$. Then  $f$ can be written in two different ways as a linear combination of compositions of chosen morphisms. This, however, does not contradict the uniqueness stated in Proposition~\ref{prop:decomposicao}: as we have observed above, $jp$ and $-(-q)i$ are the same morphism, which is the morphism denoted by $f_2$ in the decomposition above and which is uniquely determined from $f$.
\end{ex}

\subsection{Parts of a morphism}

\begin{defi}
\label{def:partes}
Given a morphism $f: X \rightarrow Y$, using the notation given in Proposition~\ref{prop:decomposicao}, we define the following morphisms:

\begin{itemize}
    \item For each $i \in \mathbb{N}$, define $\ff_i(f) = f_i$, which is the {\bf i-th part} of $f$.
    
    \item Define $\ff_{\infty}(f) \doteq f_{\infty}$, which is the {\bf infinite part} of $f$. 
    
    \item The {\bf finite part} of $f$ is  $\ff_{< \infty}(f) = f - \ff_{\infty}(f)$. 
    
    \item In case $f \neq 0$, let $i = 0,1,\ldots,\infty$ be the smallest index such that $\ff_i (f) \neq 0$. We then define $\ff_P(f) \doteq \ff_i (f)$ as the {\bf principal part of} $f$  and $\ff_S(f) \doteq f - \ff_i (f)$ as the {\bf secondary part of} $f$.
\end{itemize}
\end{defi}

This definition indicates ways to decompose a morphism $f$ using Proposition~\ref{prop:decomposicao}: 

\begin{align*}
    f &= \ff_0(f)+\ff_1(f)+ \ldots + \ff_{\infty}(f) \\
    &= \ff_{<\infty}(f) + \ff_{\infty}(f) = \ff_P(f)+\ff_S(f).
\end{align*}

\begin{ex}
\label{ex:exemplo 0 parte 2}
Let $A$ be the $k$-algebra given by the quiver 

    \begin{displaymath}
    \xymatrix{& 2 \ar[dl]_{\alpha}& \\
    1 & & 3 \ar[ul]_{\beta} \ar[ll]}
    \end{displaymath}
 bounded by $\alpha \beta = 0$. 
Its Auslander-Reiten quiver is 
    \begin{displaymath}
        \xymatrix{ 
        &{\begin{smallmatrix} 2 \\ 1 \end{smallmatrix}} \ar[dr]^f \ar@{.}[rr] && {\begin{smallmatrix} 3 \\ 2 \end{smallmatrix}} \ar[dr]& \\
       1 \ar[dr] \ar[ur] \ar@{.}[rr] && {\begin{smallmatrix} 2 \hspace{1ex} 3 \hspace{1ex} \\ \hspace{1ex} 1 \hspace{1ex} 2 \end{smallmatrix}} \ar@{.}[rr] \ar[dr]^{\phi_1} \ar[ur]^g && 3\\
        & {\begin{smallmatrix} 3 \\ 1 \hspace{1ex} 2 \end{smallmatrix}} \ar@{.}[rr] \ar[dr] \ar[ur]^{\phi_4} && {\begin{smallmatrix} 2 \hspace{1ex} 3 \\ 1 \end{smallmatrix}} \ar[dr]^{\phi_2} \ar[ur] &\\
        2 \ar[ur]^{\phi_3} \ar@{.}[rr] && {\begin{smallmatrix} 3 \\ 1 \end{smallmatrix}} \ar@{.}[rr] \ar[ur] && 2}
    \end{displaymath}
where we denote each module by its composition series, and $f,g,\phi_1,\phi_2,\phi_3,\phi_4$ denote the natural inclusions and projections between the corresponding modules. Observe that such morphisms are irreducible.

Note that this single component of the Auslander-Reiten quiver is mesh-comparable: in this case one can define a choice of irreducible morphisms which satisfy the mesh relations using the knitting technique (see \cite{AC}). Moreover, one can make this choice in such a way that the morphisms $f,g,\phi_1,\phi_2,\phi_3,\phi_4$ are chosen irreducible. Since $g$ in particular is chosen irreducible, we already get that $\ff_0(g) = 0$, $\ff_1(g) = g$ and $\ff_i(g) = 0$ for $1 < i \leq \infty$.

Now consider $f' = f+\phi_4\phi_3\phi_2\phi_1 f$. Then, $f'$ is also an irreducible morphism, but it is not a chosen one. In this case, we have $\ff_0(f') = 0$, $\ff_1(f') = f$, $\ff_2(f') = \ff_3(f') = \ff_4(f') = 0$, $\ff_5(f') = \phi_4\phi_3\phi_2\phi_1 f$ and $\ff_i(f') = 0$ for $5 < i \leq \infty$. The principal and the secondary parts of the morphisms $g$ and $f'$ are the following: 
    \begin{align*}
        \ff_P(g) = g & \hspace{1cm} \ff_P(f') = f \\
        \ff_S(g) = 0 & \hspace{1cm} \ff_S(f') = \phi_4\phi_3\phi_2\phi_1 f
    \end{align*}
\end{ex}

Our next result characterizes when a morphism lies in rad$^{\infty}$ in terms of its parts. 

\begin{prop} 
\label{prop:calculo_infty}
For a morphism $f: X \rightarrow Y$ between modules in a mesh-comparable component, the following are equivalent:
 \begin{enumerate}
     \item $f$ belongs to $ \rad^{\infty}(X,Y)$; \item $f = \ff_{\infty}(f)$; 
     \item $\ff_{<\infty}(f) = 0$.
    \end{enumerate}
\end{prop}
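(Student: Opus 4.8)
The plan is to read the three conditions through the decomposition of Proposition~\ref{prop:decomposicao} and the depth computation of Proposition~\ref{prop:depth of F(phi)}, noting that two of the three implications are essentially formal and the whole content sits in a single direction. First I would dispose of the equivalence (2) $\Leftrightarrow$ (3): by the very definition of the finite part, $\ff_{<\infty}(f) = f - \ff_{\infty}(f)$, so the equation $f = \ff_{\infty}(f)$ holds precisely when $\ff_{<\infty}(f) = 0$. Next, (2) $\Rightarrow$ (1) is immediate from item (iii) of Proposition~\ref{prop:decomposicao}: if $f = \ff_{\infty}(f)$, then $f$ equals its infinite part, which lies in $\rad^{\infty}(X,Y)$ by construction.

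The substance of the statement is therefore the remaining implication (1) $\Rightarrow$ (3). Here I would recall from the proof of Proposition~\ref{prop:decomposicao} that the finite part is realized as the image of a mesh morphism: there is $\phi \in k(\Ga)(X,Y)$ with $\ff_{\infty}(f) = f - F(\phi)$, whence $\ff_{<\infty}(f) = F(\phi)$. Assuming (1), namely $f \in \rad^{\infty}(X,Y)$, and using $\ff_{\infty}(f) \in \rad^{\infty}(X,Y)$, the difference $F(\phi) = f - \ff_{\infty}(f)$ again lies in $\rad^{\infty}(X,Y)$, since $\rad^{\infty}(X,Y)$ is a $k$-subspace of $\Hom(X,Y)$; in particular $F(\phi) \in \rad^m(X,Y)$ for every $m$.

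The key step, and the only one needing more than bookkeeping, is to conclude $\phi = 0$. For this I would invoke Proposition~\ref{prop:depth of F(phi)}: if $\phi$ were nonzero, writing it as a finite linear combination of classes of paths, that proposition yields an integer $m$ with $F(\phi) \notin \rad^m(X,Y)$, contradicting the membership $F(\phi) \in \rad^m(X,Y)$ for all $m$ obtained above. Hence $\phi = 0$, so $\ff_{<\infty}(f) = F(\phi) = 0$, which is (3), and the cycle of implications closes.

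I do not anticipate a genuine obstacle; the proof is short once the right tools are cited. The only care required is to make the identification $\ff_{<\infty}(f) = F(\phi)$ precise (tracing it back to the construction in Proposition~\ref{prop:decomposicao}) and to apply Proposition~\ref{prop:depth of F(phi)} to the single morphism $\phi$ as a whole, rather than term by term to its constituent paths, so that the minimum--maximum degree bound it supplies is used correctly. Faithfulness of $F$ (Proposition~\ref{prop:fiel e denso}) could substitute for the final contradiction, but routing everything through Proposition~\ref{prop:depth of F(phi)} keeps the argument most direct.
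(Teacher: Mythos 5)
Your proof is correct, but for the only nontrivial implication it takes a longer route than the paper. Where you prove (1) $\Rightarrow$ (3) by unwinding the construction behind Proposition~\ref{prop:decomposicao} --- identifying $\ff_{<\infty}(f)$ with $F(\phi)$ for some $\phi \in k(\Gamma)(X,Y)$, observing that $F(\phi) = f - \ff_{\infty}(f) \in \rad^{\infty}(X,Y)$, and then invoking Proposition~\ref{prop:depth of F(phi)} to force $\phi = 0$ --- the paper proves (1) $\Rightarrow$ (2) in one line: if $f \in \rad^{\infty}(X,Y)$, then $f = 0 + 0 + \cdots + 0 + f$ is itself a decomposition satisfying conditions (i)--(iii) of Proposition~\ref{prop:decomposicao}, so the uniqueness clause of that proposition immediately yields $\ff_{\infty}(f) = f$ and $\ff_n(f) = 0$ for all finite $n$. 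Both arguments rest on the same underlying machinery (Theorem~\ref{th:b} plus the $\mathbb{N}$-grading of $k(\Gamma)$); what you carry out explicitly is in effect a special case of the uniqueness proof, which the paper simply cites. The paper's route buys brevity and uses only the statement, not the proof, of Proposition~\ref{prop:decomposicao}; your route is sound provided Proposition~\ref{prop:depth of F(phi)} is read, as the paper itself does in proving Proposition~\ref{prop:fiel e denso}, as applying to an arbitrary nonzero $\phi \in k(\Gamma)(X,Y)$ (including a possible degree-zero summand when $X = Y$).

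One side remark in your last paragraph is wrong: faithfulness of $F$ (Proposition~\ref{prop:fiel e denso}) could \emph{not} substitute for the final contradiction. Faithfulness lets you conclude $\phi = 0$ from $F(\phi) = 0$, but at that point you only know $F(\phi) \in \rad^{\infty}(X,Y)$, and in a component that is not generalized standard this is strictly weaker than vanishing --- indeed, whether such morphisms vanish is precisely what is at stake. (In the paper, faithfulness is itself deduced from Proposition~\ref{prop:depth of F(phi)}, so it is the weaker tool.) Since you explicitly routed the argument through Proposition~\ref{prop:depth of F(phi)} rather than through faithfulness, this slip does not affect the validity of your proof.
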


\begin{proof} 
The implication $(1) \Rightarrow (2)$ follows applying the uniqueness stated in  Proposition~\ref{prop:decomposicao} on the decomposition $f = 0 + f \in \rad^{\infty}(X,Y)$. The implications $(2) \Rightarrow (3)$ and $(3)\Rightarrow (1)$ are immediate. 
 
\end{proof}

Also, the operation of taking finite parts behaves well under compositions: 

\begin{prop} 
Let $f: X \rightarrow Y$ and  $g: Y \rightarrow Z$ be morphisms between modules in a mesh-comparable component. Then $\ff_{<\infty}(gf) = \ff_{<\infty}(g)\ff_{<\infty}(f)$.
\end{prop}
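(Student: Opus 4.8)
The plan is to reduce the statement to the functoriality of $F$ together with the uniqueness of the decomposition in Proposition~\ref{prop:decomposicao}. First I would pin down exactly what the finite part is. By Lemma~\ref{lem:almost infty} and the construction in the proof of Proposition~\ref{prop:decomposicao}, for $f: X \rightarrow Y$ there is a $\phi_f \in k(\Ga)(X,Y)$ with $f - F(\phi_f) \in \rad^{\infty}(X,Y)$; decomposing $F(\phi_f)$ by the $k$-linearity of $F$ into its homogeneous pieces and taking $f - F(\phi_f)$ as the infinite part yields a decomposition of $f$ of the type guaranteed by Proposition~\ref{prop:decomposicao}, so by uniqueness $\ff_{<\infty}(f) = F(\phi_f)$ and $\ff_{\infty}(f) = f - F(\phi_f)$. (Notice this shows more generally that $\ff_{<\infty}(f)$ is the unique $F$-image $F(\phi)$ with $f - F(\phi) \in \rad^{\infty}$.) Likewise I would choose $\phi_g \in k(\Ga)(Y,Z)$ with $\ff_{<\infty}(g) = F(\phi_g)$ and $\ff_{\infty}(g) = g - F(\phi_g) \in \rad^{\infty}(Y,Z)$.

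Next I would expand the composite, writing $f = F(\phi_f) + \ff_{\infty}(f)$ and $g = F(\phi_g) + \ff_{\infty}(g)$:
\[ gf = F(\phi_g)F(\phi_f) + F(\phi_g)\ff_{\infty}(f) + \ff_{\infty}(g)F(\phi_f) + \ff_{\infty}(g)\ff_{\infty}(f). \]
The crucial observation is that $\rad^{\infty} = \cap_{n} \rad^{n}$ is a two-sided ideal of $\md A$, so composing (on either side) any morphism with a morphism in $\rad^{\infty}$ stays in $\rad^{\infty}$; consequently each of the last three summands lies in $\rad^{\infty}(X,Z)$. Using the functoriality and $k$-linearity of $F$, the first summand equals $F(\phi_g\phi_f)$, the image of $\phi_g\phi_f \in k(\Ga)(X,Z)$. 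Thus I will have exhibited
\[ gf = F(\phi_g\phi_f) + r, \qquad r \in \rad^{\infty}(X,Z). \]

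Finally I would invoke uniqueness once more. By the characterization recalled in the first paragraph, the displayed identity shows $\ff_{<\infty}(gf) = F(\phi_g\phi_f)$. Since $F(\phi_g\phi_f) = F(\phi_g)F(\phi_f) = \ff_{<\infty}(g)\ff_{<\infty}(f)$ by functoriality, the desired equality $\ff_{<\infty}(gf) = \ff_{<\infty}(g)\ff_{<\infty}(f)$ follows.

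I expect the only genuinely delicate point to be the verification that the three cross terms land in $\rad^{\infty}$, which rests on $\rad^{\infty}$ being a two-sided ideal; this is immediate from $\rad^{\infty} = \cap_{n} \rad^{n}$ and each $\rad^{n}$ being an ideal of $\md A$. Everything else is bookkeeping with the already-established existence and uniqueness of the decomposition and with the identification of $\ff_{<\infty}$ as the $F$-image part of a morphism.
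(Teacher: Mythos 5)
Your proof is correct and takes essentially the same route as the paper's: expand $gf$ into the four products of finite and infinite parts, note that the three terms involving $\ff_{\infty}$ lie in $\rad^{\infty}$ because $\rad^{\infty}$ is a two-sided ideal, and identify the remaining term as $\ff_{<\infty}(gf)$ by the uniqueness in Proposition~\ref{prop:decomposicao}. If anything, you spell out a point the paper leaves implicit, namely that $\ff_{<\infty}(g)\ff_{<\infty}(f) = F(\phi_g)F(\phi_f) = F(\phi_g\phi_f)$ is itself an $F$-image and hence a legitimate candidate for the finite part, which is precisely what the appeal to uniqueness requires.
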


\begin{proof}  Observe that         
        \begin{align*}
            gf &= (\ff_{<\infty}(g)+\ff_{\infty}(g))(\ff_{<\infty}(f)+\ff_{\infty}(f)) \\
            &= \ff_{<\infty}(g)\ff_{<\infty}(f) + \ff_{\infty}(g)\ff_{<\infty}(f) + \ff_{<\infty}(g)\ff_{\infty}(f) + \ff_{\infty}(g)\ff_{\infty}(f)
        \end{align*}
Since the last three terms in the sum above belong to $\rad^{\infty}$, we get that the first term is the finite part of $gf$, as required. 
\end{proof}

\subsection{Generalized standard components}

Recall that an Auslander-Reiten component $\Ga$ is called {\bf generalized standard} \cite{Sko} provided $\rad^{\infty}(X,Y) = 0$ for all $X, Y \in \Ga$. The next result is an immediate consequence of Proposition~\ref{prop:calculo_infty}. Interestingly, it states that for a component, being generalized standard is equivalent to the vanishing of the infinite parts of morphisms between its modules.

\begin{prop}
\label{prop:parte inf e gen st}
Assume that a component $\Gamma$ is mesh-comparable via $F$. Then, $\Gamma$ is generalized standard if and only if $\ff_{\infty}(f) = 0$ for all morphisms $f$ between modules of  $\Gamma$, which is also equivalent to say that $f = \ff_{<\infty}(f)$ for each morphism $f$.
\end{prop}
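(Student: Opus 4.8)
The plan is to derive this statement directly from Proposition~\ref{prop:calculo_infty}, since the definition of generalized standard and the characterization of $\rad^\infty$ in terms of parts almost immediately yield the equivalence. The key observation is that ``generalized standard'' means $\rad^\infty(X,Y) = 0$ for all $X,Y \in \Gamma$, while Proposition~\ref{prop:calculo_infty} already tells us, for a \emph{single} morphism $f$, that $f \in \rad^\infty(X,Y)$ is equivalent to $\ff_\infty(f) = f$, which is equivalent to $\ff_{<\infty}(f) = 0$. So the whole task is to correctly quantify over morphisms and to connect the vanishing of the ideal $\rad^\infty$ with the vanishing of the infinite parts.

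First I would prove the forward direction. Suppose $\Gamma$ is generalized standard, and let $f \colon X \to Y$ be any morphism between modules of $\Gamma$. By Proposition~\ref{prop:decomposicao}, the infinite part $\ff_\infty(f)$ lies in $\rad^\infty(X,Y)$ (this is exactly condition (iii) there). But generalized standardness forces $\rad^\infty(X,Y) = 0$, hence $\ff_\infty(f) = 0$. This handles (a)$\Rightarrow$(b) for every such $f$.

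For the converse, I would assume $\ff_\infty(f) = 0$ for every morphism $f$ between modules of $\Gamma$, and show $\rad^\infty(X,Y) = 0$ for all $X,Y \in \Gamma$. Take any $g \in \rad^\infty(X,Y)$. By Proposition~\ref{prop:calculo_infty}, the condition $g \in \rad^\infty(X,Y)$ is equivalent to $g = \ff_\infty(g)$. But by hypothesis $\ff_\infty(g) = 0$, so $g = 0$. Since $g$ was an arbitrary element of $\rad^\infty(X,Y)$, we conclude $\rad^\infty(X,Y) = 0$, i.e.\ $\Gamma$ is generalized standard. The final equivalence with ``$f = \ff_{<\infty}(f)$ for each morphism $f$'' is then just the identity $f = \ff_{<\infty}(f) + \ff_\infty(f)$, which shows $\ff_\infty(f) = 0 \iff f = \ff_{<\infty}(f)$, again appealing to the characterization in Proposition~\ref{prop:calculo_infty}.

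I do not anticipate a genuine obstacle here, as the statement is explicitly flagged in the text as ``an immediate consequence of Proposition~\ref{prop:calculo_infty}''; the only point requiring a little care is the logical structure of the quantifiers, namely ensuring that the equivalence ``$\rad^\infty = 0$ for all pairs'' $\iff$ ``$\ff_\infty(f) = 0$ for all $f$'' is argued at the level of the whole component rather than a single morphism. The mildly delicate step is recognizing that one direction uses Proposition~\ref{prop:decomposicao}(iii) (infinite parts land in $\rad^\infty$) while the other uses the full equivalence of Proposition~\ref{prop:calculo_infty} (membership in $\rad^\infty$ is detected by the infinite part), but both are already established, so the proof reduces to assembling them correctly.
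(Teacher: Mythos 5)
Your proof is correct and follows exactly the route the paper intends: the paper gives no written proof, merely flagging the proposition as an immediate consequence of Proposition~\ref{prop:calculo_infty}, and your argument is precisely the assembly of that proposition with the fact that $\ff_{\infty}(f) \in \rad^{\infty}(X,Y)$ and the identity $\ff_{<\infty}(f) = f - \ff_{\infty}(f)$. Nothing is missing, and the quantifier bookkeeping you highlight is handled correctly.
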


\subsection{The depth of a morphism}

Let $f: X \rightarrow Y$ be a morphism in $\md A$. The \textbf{depth} of $f$, denoted by $\Dp(f)$, is the smaller natural number $n \geq 0$ such that  $f \in \rad^n(X,Y)$ and $f \notin \rad^{n+1}(X,Y)$, if such number exists. Otherwise we define $\Dp(f) = \infty$.

\begin{obs}
Let $f$ be a morphism and $0 \leq i \leq \infty$. By construction, $\ff_i(f) \in \rad^i$. If $i$ is finite  and $\ff_i(f) \neq 0$, then, applying Proposition~\ref{prop:depth of F(phi)}, we have that $\ff_i(f) \notin \rad^{i+1}$. This means that for every $0 \leq i \leq \infty$, $\Dp(\ff_i(f)) = i$, i.e, the $i$-part of $f$ has depth $i$, provided it is non-zero.
\end{obs}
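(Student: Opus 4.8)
The plan is to extract the depth of each part directly from the way the decomposition of Proposition~\ref{prop:decomposicao} is built, splitting into the three regimes $i=0$, $0<i<\infty$ and $i=\infty$. In all three the inclusion $\ff_i(f)\in\rad^i(X,Y)$ is already guaranteed by the construction, so the whole task reduces to establishing the strict non-membership $\ff_i(f)\notin\rad^{i+1}(X,Y)$ when $\ff_i(f)\neq 0$, which combined with the inclusion yields $\Dp(\ff_i(f))=i$.

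For the main range $0<i<\infty$, I would first rewrite $\ff_i(f)$, using clause (ii) of Proposition~\ref{prop:decomposicao} and the $k$-linearity of $F$, as $\ff_i(f)=F(\phi_i)$ where $\phi_i=\sum_j\lambda_j\,\overline{\alpha_{j,i}}\cdots\overline{\alpha_{j,1}}$ is a linear combination of classes of paths of one and the same length $i$; in other words $\phi_i$ is homogeneous of degree $i$ in the $\mathbb{N}$-graded category $k(\Gamma)$. If $\ff_i(f)\neq 0$, then $\phi_i\neq 0$ by the faithfulness of $F$ established in Proposition~\ref{prop:fiel e denso}. Proposition~\ref{prop:depth of F(phi)} applied to $\phi_i$ then does all the work: since every summand of $\phi_i$ has length exactly $i$, its two indices coincide as $n=i$ and $m=i+1$, and the conclusion is precisely $F(\phi_i)\in\rad^i(X,Y)\setminus\rad^{i+1}(X,Y)$, that is, $\Dp(\ff_i(f))=i$.

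The two extreme cases I would treat by hand, since Proposition~\ref{prop:depth of F(phi)} is phrased for combinations of (nonempty) products of arrows. For $i=0$, clause (i) of Proposition~\ref{prop:decomposicao} says $\ff_0(f)$ is zero or an isomorphism; a nonzero $\ff_0(f)$ is then an isomorphism between the indecomposables $X$ and $Y$, hence not radical, so $\ff_0(f)\in\rad^0(X,Y)\setminus\rad^1(X,Y)$ and $\Dp(\ff_0(f))=0$. For $i=\infty$, clause (iii) gives $\ff_\infty(f)\in\rad^\infty(X,Y)=\bigcap_{n\geq 0}\rad^n(X,Y)$, so $\ff_\infty(f)$ lies in $\rad^n(X,Y)$ for every $n$ and, directly from the definition of depth, $\Dp(\ff_\infty(f))=\infty$.

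The one point that needs care — and the only place where the argument could go astray — is the homogeneity claim that allows the minimum and maximum of Proposition~\ref{prop:depth of F(phi)} to collapse to the single value $i$. I would stress that it is precisely clause (ii) of Proposition~\ref{prop:decomposicao} that forces the representative $\phi_i$ to involve paths of a single fixed length, so that $n=m-1=i$; had the decomposition mixed lengths, the proposition would only trap the depth strictly between two distinct indices. Since the grading of $k(\Gamma)$ is well defined on morphisms, such a single-length representative exists irrespective of the (non-unique, owing to the mesh relations) way one writes $\ff_i(f)$ as a combination of images of paths, so the computed depth is unambiguous.
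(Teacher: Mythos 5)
Your proof is correct and takes essentially the same approach as the paper: in both, the heart of the matter is that $\ff_i(f)=F(\phi_i)$ for a homogeneous degree-$i$ morphism $\phi_i$ in the $\mathbb{N}$-graded category $k(\Gamma)$, so that Proposition~\ref{prop:depth of F(phi)} applies with $n=i$ and $m=i+1$ to give $\ff_i(f)\in\rad^i\setminus\rad^{i+1}$, the cases $i=0$ and $i=\infty$ being immediate (the paper leaves them implicit). One tiny remark: to pass from $\ff_i(f)\neq 0$ to $\phi_i\neq 0$ you only need the $k$-linearity of $F$ (any linear functor sends $0$ to $0$); the faithfulness of Proposition~\ref{prop:fiel e denso} is the converse implication and is not needed here.
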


As we are about to see, the depth of a non-zero morphism coincides with the index of its principal part.

\begin{prop}
If $f$ is a nonzero morphism between modules in a mesh-comparable component and $n \in \{0,1,\ldots,\infty\}$, then $\Dp(f) = n$ if and only if $\ff_i(f) = 0$ for all $i<n$  and $\ff_n(f) \neq 0$.
\end{prop}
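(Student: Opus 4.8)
The statement asserts that the depth of a non-zero morphism $f$ coincides with the index of its principal part $\ff_P(f)$. The plan is to prove both directions using the characterization of $\rad^\infty$ in terms of parts (Proposition~\ref{prop:calculo_infty}) together with the Remark immediately preceding the statement, which tells us that each non-zero part $\ff_i(f)$ has depth exactly $i$. I would organize the argument around the decomposition $f = \ff_0(f) + \ff_1(f) + \cdots + \ff_\infty(f)$ and carefully track which filtration layer $\rad^m(X,Y)$ each summand lands in.

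First I would treat the finite case, assuming $\Dp(f) = n$ with $n$ finite. I would show the contrapositive-style implications in each direction. For the ``only if'' direction, suppose $\Dp(f) = n$. Since $\ff_i(f) \in \rad^i(X,Y)$ by construction and $\ff_\infty(f) \in \rad^\infty(X,Y) \subseteq \rad^{n+1}(X,Y)$ (as $n$ is finite), all summands with index $> n$ lie in $\rad^{n+1}(X,Y)$; because $f \in \rad^n(X,Y)$, it follows that $\ff_0(f) + \cdots + \ff_{n-1}(f) \in \rad^n(X,Y)$ as well. But each $\ff_i(f)$ with $i < n$ has depth exactly $i < n$ if it is non-zero (by the preceding Remark), so if any were non-zero the sum could not lie in $\rad^n$ without a cancellation—and here the key point is that distinct parts live in distinct graded pieces, so no such cancellation is possible. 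I would make this rigorous by invoking the uniqueness of the decomposition in Proposition~\ref{prop:decomposicao} together with the fact that the $\ff_i$ are determined by the $\mathbb{N}$-grading of $k(\Ga)$, concluding $\ff_i(f) = 0$ for $i < n$. Then since $f \in \rad^n \setminus \rad^{n+1}$ and all parts with index $> n$ sit in $\rad^{n+1}$, the part $\ff_n(f)$ must be non-zero (otherwise $f \in \rad^{n+1}$).

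For the ``if'' direction, assume $\ff_i(f) = 0$ for all $i < n$ and $\ff_n(f) \neq 0$. Then $f = \ff_n(f) + (\text{higher parts})$, where the higher parts all lie in $\rad^{n+1}(X,Y)$, so $f \in \rad^n(X,Y)$. Since $\ff_n(f)$ has depth exactly $n$ (again by the Remark), we have $\ff_n(f) \notin \rad^{n+1}(X,Y)$; as the remaining terms do lie in $\rad^{n+1}(X,Y)$, we conclude $f \notin \rad^{n+1}(X,Y)$, hence $\Dp(f) = n$.

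Finally I would handle the case $n = \infty$ separately, since $\ff_\infty$ is not governed by the grading argument. Here $\Dp(f) = \infty$ means $f \in \rad^\infty(X,Y)$, which by Proposition~\ref{prop:calculo_infty} is equivalent to $\ff_{<\infty}(f) = 0$, i.e.\ $\ff_i(f) = 0$ for all finite $i$ and $f = \ff_\infty(f) \neq 0$; this is exactly the condition ``$\ff_i(f) = 0$ for all $i < \infty$ and $\ff_\infty(f) \neq 0$,'' so the equivalence is immediate. The main obstacle I anticipate is the no-cancellation point in the finite ``only if'' direction: one must argue cleanly that two parts of different finite indices cannot combine to raise the effective filtration degree, and I expect the cleanest route is to appeal directly to the uniqueness in Proposition~\ref{prop:decomposicao} rather than reprove a grading argument by hand.
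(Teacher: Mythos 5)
Your proposal follows the same overall route as the paper: decompose $f$ via Proposition~\ref{prop:decomposicao}, use the Remark that a nonzero part $\ff_i(f)$ has depth exactly $i$, handle $n=\infty$ through Proposition~\ref{prop:calculo_infty}; your sufficiency direction is essentially verbatim the paper's. The one place where you diverge---and where your proposed fix would not work as stated---is the no-cancellation step in the necessity direction. You reduce to: $g := \ff_0(f) + \cdots + \ff_{n-1}(f) \in \rad^n(X,Y)$, and you want to conclude that each summand vanishes ``by invoking the uniqueness of the decomposition in Proposition~\ref{prop:decomposicao}.'' Uniqueness cannot do this: it only tells you that the parts of $g$ are exactly the morphisms $\ff_0(f), \ldots, \ff_{n-1}(f)$ (and that the parts of $g$ of index $\geq n$ vanish), which you already know. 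Passing from ``$g \in \rad^n$'' to ``the parts of $g$ of index $< n$ are zero'' is precisely an instance of the implication you are in the middle of proving, so this appeal is circular.

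The gap is easy to close, and the paper's proof shows the cleanest way: do not group the low-index parts, but peel them off one at a time. Since $\Dp(f) = n > 0$, $f$ is radical; as every $\ff_i(f)$ with $i \geq 1$ is radical, so is $\ff_0(f) = f - \ff_1(f) - \cdots - \ff_{\infty}(f)$, whence $\ff_0(f) = 0$ by the Remark (a nonzero $0$-th part is an isomorphism, of depth $0$). Inductively, once $\ff_0(f) = \cdots = \ff_{i-1}(f) = 0$ with $i < n$, one gets $\ff_i(f) = f - \sum_{j > i} \ff_j(f) \in \rad^{i+1}(X,Y)$, since $f \in \rad^n \subseteq \rad^{i+1}$ and each term of the sum lies in $\rad^{i+1}$; so $\ff_i(f) = 0$, again by the Remark. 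This sidesteps cancellation entirely, because at each stage only one potentially nonzero low part remains. (Equivalently, in your grouped formulation, take the minimal index $i$ with $\ff_i(f) \neq 0$ and run the same two lines; or apply Proposition~\ref{prop:depth of F(phi)} to a preimage $\phi_0 + \cdots + \phi_{n-1}$ of $g$ under $F$---this is the ``grading argument'' you wanted to avoid, but it is already proved in the paper, so nothing needs reproving.) With this replacement your argument is complete and agrees with the paper's.
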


\begin{proof}
 {\it Necessity.}
Suppose that  $\Dp(f) = n$. First we verify that $\ff_i(f) = 0$ for all $i<n$. If $n=0$, then this is true by emptiness. If now $n > 0$, then $f$ is radical, and, given that $f = \ff_0(f) + \ff_1(f) + \cdots + \ff_{\infty}(f)$, this means $\ff_0(f) = f-\ff_1 (f) - \cdots - \ff_{\infty}(f)$ is also radical, and so $\ff_0(f) = 0$ by the above remark. By a similar argument, one can show that $\ff_1(f) = \cdots = \ff_{n-1}(f) = 0$. Now we verify that $\ff_n(f) \neq 0$. From the first part we have $f = \ff_n(f) + \ff_{n+1}(f) + \cdots + \ff_{\infty}(f)$, with $f \notin \rad^{n+1}$ by hypothesis and the fact that $\ff_{n+1}(f) + \cdots + \ff_{\infty}(f) \in \rad^{n+1}$. This shows that $\ff_n(f) \neq 0$.
  
{\it Sufficiency.} 
Suppose that $\ff_i(f) = 0$ for all  $i<n$ and that  $\ff_n(f) \neq 0$. If $n= \infty$, then $f = \ff_{\infty}(f) \in \rad^{\infty}$ as required. So suppose $n < \infty$. Then $f = \ff_n(f)+ \ff_{n+1}(f) + \cdots + \ff_{\infty}(f)$, with $\ff_n(f) \in \rad^n \setminus \rad^{n+1}$ (by the remark above) and with $\ff_{n+1}(f) + \cdots + \ff_{\infty}(f) \in \rad^{n+1}$. This shows that  $f \in \rad^n \setminus \rad^{n+1}$ and the proof is finished.
\end{proof}

\subsection{Paths of chosen irreducible morphisms}

By the next two results, we establish a curious connection between the radical filtration of the space of morphisms between two modules and an upper bound on the length of nonzero paths of chosen morphisms between such modules.

\begin{prop}
\label{prop: upper bound}
    Let $\Gamma$ be an Auslander-Reiten component and assume it is mesh-comparable via $F$. For each $m \geq 1$ and all modules $X, Y$ of $\Gamma$, the following statements are equivalent:
    \begin{enumerate}
        \item[(a)] $\rad^m(X,Y) = \rad^{\infty}(X,Y)$.
        \item[(b)] each path of chosen irreducible morphisms between  $X$ and $Y$ with nonzero composition has length strictly smaller than $m$.
    \end{enumerate}
\end{prop}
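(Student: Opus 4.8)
The plan is to establish both implications by translating statement (b) into the vanishing of homogeneous components of the mesh category and then controlling these through Theorem~\ref{th:b}. The underlying dictionary is the following. A path of chosen morphisms of length $\ell$ from $X$ to $Y$ is by definition a composite $F(\overline{\alpha_\ell})\cdots F(\overline{\alpha_1}) = F(\overline{\alpha_\ell\cdots\alpha_1})$, i.e. the image under $F$ of the class $\overline{p}$ of a length-$\ell$ path $p$ from $X$ to $Y$; by the faithfulness of $F$ (Proposition~\ref{prop:fiel e denso}) this composite is nonzero precisely when $\overline{p}\neq 0$ in $k(\Gamma)$. Since the classes of the length-$\ell$ paths from $X$ to $Y$ span the degree-$\ell$ homogeneous component of $k(\Gamma)(X,Y)$, and a spanning family is identically zero iff the space it spans is zero, condition (b) is equivalent to the assertion that this degree-$\ell$ component vanishes for every $\ell\geq m$.

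For (a)$\Rightarrow$(b) I would argue directly from Corollary~\ref{cor:composicao_bem_comportada_chosen}. Assuming $\rad^m(X,Y)=\rad^{\infty}(X,Y)$, let $g$ be the composite of a path of chosen morphisms of length $\ell\geq m$. The corollary gives either $g=0$ or $g\in\rad^{\ell}(X,Y)\setminus\rad^{\ell+1}(X,Y)$; but in the second case $\ell\geq m$ forces $g\in\rad^{\ell}(X,Y)\subseteq\rad^m(X,Y)=\rad^{\infty}(X,Y)\subseteq\rad^{\ell+1}(X,Y)$, contradicting $g\notin\rad^{\ell+1}(X,Y)$. Hence $g=0$, so no nonzero composite of chosen morphisms has length $\geq m$, which is exactly (b).

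For (b)$\Rightarrow$(a) I would combine the grading with Theorem~\ref{th:b}. First I record a pointwise reformulation of (a): as $\rad^m(X,Y)\supseteq\rad^{m+1}(X,Y)\supseteq\cdots$ is a descending chain with intersection $\rad^{\infty}(X,Y)$, the equality $\rad^m(X,Y)=\rad^{\infty}(X,Y)$ holds iff $\rad^n(X,Y)=\rad^{n+1}(X,Y)$ for all $n\geq m$. Next, since $\mathcal{R}^n k(\Gamma)(X,Y)$ is spanned by the classes of the paths of length at least $n$ and $k(\Gamma)$ is $\mathbb{N}$-graded, the subquotient $\mathcal{R}^n k(\Gamma)(X,Y)/\mathcal{R}^{n+1}k(\Gamma)(X,Y)$ is canonically the degree-$n$ component of $k(\Gamma)(X,Y)$. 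Now fix $n\geq m$: assuming (b), the dictionary of the first paragraph shows this degree-$n$ component is zero, and Theorem~\ref{th:b} transports the vanishing to $\rad^n(X,Y)/\rad^{n+1}(X,Y)=0$, i.e. $\rad^n(X,Y)=\rad^{n+1}(X,Y)$. Since this holds for every $n\geq m$, the reformulation delivers (a).

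\textbf{The main obstacle.} The only delicate point is the bookkeeping of the three filtrations at play --- the radical powers $\rad^\bullet$ in $\md A$, the powers $\mathcal{R}^\bullet k(\Gamma)$ in the mesh category, and the $\mathbb{N}$-grading --- and in particular verifying that Theorem~\ref{th:b} identifies the degree-$n$ graded piece of $k(\Gamma)(X,Y)$ with $\rad^n(X,Y)/\rad^{n+1}(X,Y)$. Once the isomorphism $\mathcal{R}^n k(\Gamma)(X,Y)/\mathcal{R}^{n+1}k(\Gamma)(X,Y)\cong(k(\Gamma))_n(X,Y)$ and the descending-chain reformulation of (a) are in hand, both implications follow quickly, with faithfulness of $F$ doing the essential work of converting ``zero composite of chosen morphisms'' into ``zero class in $k(\Gamma)$.''
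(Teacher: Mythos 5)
Your proof is correct. The implication (a)$\Rightarrow$(b) is essentially the paper's: both deduce from Corollary~\ref{cor:composicao_bem_comportada_chosen} that a nonzero composite of $\ell$ chosen morphisms lies in $\rad^{\ell}\setminus\rad^{\ell+1}$, which is impossible for $\ell\geq m$ under (a). For (b)$\Rightarrow$(a), however, you take a genuinely different route. The paper argues by contradiction through the machinery of Section~\ref{sec:decomposition}: a morphism $f\in\rad^m(X,Y)\setminus\rad^{\infty}(X,Y)$ has finite depth $n\geq m$, hence nonzero $n$-th part $\ff_n(f)$, and since $\ff_n(f)$ is a linear combination of composites of $n$ chosen morphisms, some composite of length at least $m$ is nonzero, contradicting (b). You instead stay inside the mesh category: faithfulness of $F$ (Proposition~\ref{prop:fiel e denso}) converts (b) into the vanishing of the degree-$\ell$ homogeneous components of $k(\Gamma)(X,Y)$ for all $\ell\geq m$; the $\mathbb{N}$-grading identifies these components with the quotients $\Rr^{\ell}k(\Gamma)(X,Y)/\Rr^{\ell+1}k(\Gamma)(X,Y)$; and Theorem~\ref{th:b} transports their vanishing to $\rad^{\ell}(X,Y)=\rad^{\ell+1}(X,Y)$ for all $\ell\geq m$, which yields (a) by stabilization of the descending chain of radical powers. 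Both arguments are ultimately powered by Theorem~\ref{th:b}, but yours never invokes the decomposition into parts, so the proposition could be proved immediately after Proposition~\ref{prop:fiel e denso}; the paper's proof is shorter only because the depth/parts apparatus has already been built by that point. Your dictionary also isolates a slightly finer fact of independent interest: condition (b) is equivalent to the vanishing of the homogeneous components of $k(\Gamma)(X,Y)$ in every degree at least $m$.
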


\begin{proof}
    (a) $\Rightarrow$ (b). Let $X \xrightarrow{f_1} \cdots \xrightarrow{f_n} Y$ be a path of chosen irreducible morphisms with nonzero composition. We shall prove that $n < m$. Since $f_n \cdots f_1 \neq 0$, it follows from Corollary~\ref{cor:composicao_bem_comportada_chosen} that $f_n \ldots f_1 \in \rad^n(X,Y) \setminus \rad^{n+1}(X,Y)$. In particular, $f_n \ldots f_1 \notin \rad^{\infty}(X,Y) = \rad^m(X,Y)$ and so $n < m$ as required. 

    (b)$ \Rightarrow$ (a). Suppose there exists a morphism $f \in \rad^m(X,Y) \setminus \rad^{\infty}(X,Y)$. So, there exists an  $n \geq m$ such that $\Dp(f) = n < \infty$, which implies that $\ff_n(f) \neq 0$. By definition,  $\ff_n(f)$ is a linear combination of compositions of at least $n$ chosen morphisms, and then we can conclude that there exists a path of chosen morphisms of length greater than or equal to $n$ with nonzero composition. By hypothesis, $n < m$, a contradiction to how $n$ was defined. Hence  $\rad^m(X,Y) = \rad^{\infty}(X,Y)$.
\end{proof}

\begin{cor}
\label{cor:upper bound}
    Let $\Gamma$ be mesh-comparable via $F$ and let $X,Y$ be modules in $\Gamma$. Let  $m$ be the smaller natural number such that $\rad^m(X,Y) = \rad^{\infty}(X,Y)$. Then:

    \begin{enumerate}
        \item[(a)] If $m \geq 1$, then there exists a nonzero path of chosen irreducible morphisms between $X$ and $Y$ of length $m-1$.

        \item[(b)] The number $m$ is strictly larger than the length of any nonzero path of chosen irreducible morphisms between $X$ and $Y$. In particular, if  $m=0$, then there is no such path.
    \end{enumerate}
\end{cor}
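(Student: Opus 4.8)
The plan is to read off both parts from Proposition~\ref{prop: upper bound}, which already matches the equality $\rad^m(X,Y)=\rad^{\infty}(X,Y)$ with a bound on the lengths of nonzero paths of chosen morphisms, combined with the minimality built into the definition of $m$ and with Corollary~\ref{cor:composicao_bem_comportada_chosen} on the good behaviour of compositions of chosen morphisms. In essence, part (b) will be a direct restatement of Proposition~\ref{prop: upper bound}, while part (a) will extract from the minimality of $m$ a path realizing the extremal length $m-1$.

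For part (b) I would distinguish $m\geq 1$ from $m=0$. If $m\geq 1$, then by the definition of $m$ the hypothesis (a) of Proposition~\ref{prop: upper bound} holds, so its conclusion (b) holds verbatim: every nonzero path of chosen irreducible morphisms between $X$ and $Y$ has length strictly smaller than $m$, i.e.\ $m$ exceeds all such lengths. The value $m=0$ lies outside the range of Proposition~\ref{prop: upper bound} and needs a direct argument: from $\rad^0(X,Y)=\Hom(X,Y)=\rad^{\infty}(X,Y)$ together with the inclusions $\rad^{\infty}(X,Y)\subseteq\rad^n(X,Y)\subseteq\rad^0(X,Y)$ one gets $\rad^n(X,Y)=\Hom(X,Y)$ for all $n$, so every difference $\rad^n(X,Y)\setminus\rad^{n+1}(X,Y)$ is empty. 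By Corollary~\ref{cor:composicao_bem_comportada_chosen} no nonzero composition of chosen morphisms can then survive, and since the identity is never radical, $m=0$ also forces $X\not\cong Y$, ruling out the trivial path as well; this gives the ``in particular'' clause.

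For part (a), take $m\geq 1$; the minimality of $m$ gives $\rad^{m-1}(X,Y)\neq\rad^{\infty}(X,Y)$. If $m\geq 2$, I would apply Proposition~\ref{prop: upper bound} with $m-1\geq 1$ in place of $m$: its hypothesis (a) fails, hence by the equivalence its conclusion (b) fails, producing a nonzero path of chosen irreducible morphisms of length at least $m-1$; invoking part (b) just established (equivalently, Proposition~\ref{prop: upper bound} for the actual $m$) bounds that same length below $m$, so it is squeezed to be exactly $m-1$. The remaining case $m=1$ asks for a length-$0$ path: here $\rad^0(X,Y)\neq\rad^1(X,Y)$ means $\Hom(X,Y)\neq\rad(X,Y)$, which for indecomposables forces $X\cong Y$, so the trivial path at $X$, whose composition is $1_X\neq 0$, serves as the required length-$0$ path. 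The main obstacles are precisely the two boundary values of $m$: the case $m=0$, which escapes Proposition~\ref{prop: upper bound} and requires the collapse of the radical filtration, and the case $m=1$, where ``path of length $0$'' must be read as the trivial path; the only genuinely quantitative step is the squeeze in (a), where the failure of the bound at $m-1$ and its validity at $m$ together pin the realized length to exactly $m-1$.
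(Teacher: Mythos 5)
Your proof is correct, but it reaches part (a) by a genuinely different route than the paper. For (a), the paper argues directly with the decomposition machinery: by minimality of $m$ there is some $f \in \rad^{m-1}(X,Y)\setminus\rad^{m}(X,Y)$, so $\Dp(f)=m-1$, hence $\ff_{m-1}(f)\neq 0$ by the proposition relating depth to parts; since $\ff_{m-1}(f)$ is by construction a linear combination of composites of chosen morphisms along paths of length $m-1$, some such composite is nonzero, and a path of length exactly $m-1$ appears in one step. You instead use Proposition~\ref{prop: upper bound} as a black box: applying its equivalence at level $m-1$ (the contrapositive of (b)$\Rightarrow$(a)) yields a nonzero path of length at least $m-1$, and you then squeeze with the upper bound from part (b) to pin the length down to exactly $m-1$. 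The two arguments share the same mathematical core --- the (b)$\Rightarrow$(a) direction of Proposition~\ref{prop: upper bound} is itself proved with precisely the depth argument the paper repeats here --- but yours avoids re-invoking the parts/depth machinery, at the cost of the extra squeeze and a separate treatment of $m=1$, where Proposition~\ref{prop: upper bound} does not apply and the trivial path must be produced by hand (via $\Hom(X,Y)\neq\rad(X,Y)$ forcing $X\cong Y$). A genuine plus of your write-up is the boundary value $m=0$ in part (b): Proposition~\ref{prop: upper bound} is stated only for $m\geq 1$, and the paper's one-line proof of (b) glosses over this case, whereas you note that $\Hom(X,Y)=\rad^{\infty}(X,Y)$ collapses the whole radical filtration, so Corollary~\ref{cor:composicao_bem_comportada_chosen} kills every composite of chosen morphisms of positive length, and the non-radicality of the identity rules out the trivial path. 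For part (b) with $m\geq 1$ your argument and the paper's coincide.
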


\begin{proof} (a) By definition, there exists a morphism  $f \in \rad^{m-1}(X,Y) \setminus \rad^m (X,Y)$. Therefore $\Dp(f) = m-1$ and so $\ff_{m-1} (f) \neq 0$. Hence there exists a nonzero path of chosen morphisms between $X$ and $Y$ of length $m-1$.\\
(b) It follows directly from Proposition~\ref{prop: upper bound}.
\end{proof}

\section{A new proof that standard implies generalized standard}
\label{sec:new proof Liu}

The results we have given so far allow us to give two new proofs of the essential result by S. Liu (\cite{Liu3}) which states that each standard component is generalized standard. We consider these proofs significantly simpler than the original one. 

\begin{teo}[\cite{Liu3}]
    Let $\Gamma$ be a standard Auslander-Reiten component of an algebra $A$. Then, for any pair of modules $X,Y$ of $\Gamma$, it holds that $\rad^{\infty}_A(X,Y) = 0$, that is, $\Gamma$ is generalized standard.
\end{teo}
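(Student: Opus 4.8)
The plan is to reduce everything to two facts already established: that a standard component admits a mesh-comparison which is \emph{simultaneously} an isomorphism of categories (Proposition~\ref{prop:standard eh mesh-c}), and that a nonzero morphism of the mesh category is sent by any mesh-comparison to a morphism of finite depth (Proposition~\ref{prop:depth of F(phi)}, itself resting on Theorem~\ref{th:b} and the $\mathbb{N}$-grading of $k(\Ga)$). The key observation is that these two properties, applied to one and the same functor $F$, are incompatible with the existence of a nonzero morphism in $\rad^{\infty}$.

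First I would fix, using Proposition~\ref{prop:standard eh mesh-c}, a functor $F:k(\Ga) \to \ind \Ga$ that is both a mesh-comparison and an isomorphism of categories; in particular $F$ is full. Then I would take arbitrary modules $X,Y$ of $\Ga$ together with a morphism $f \in \rad^{\infty}(X,Y)$, the goal being $f=0$. By fullness there is a $\phi \in k(\Ga)(X,Y)$ with $F(\phi)=f$. Suppose, for contradiction, that $\phi \neq 0$. Proposition~\ref{prop:depth of F(phi)} then supplies a finite integer $m$ with $F(\phi) \notin \rad^{m}(X,Y)$. But $f=F(\phi)$ lies in $\rad^{\infty}(X,Y) \subseteq \rad^{m}(X,Y)$, a contradiction. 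Hence $\phi=0$ and $f=F(\phi)=0$, as required.

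I do not expect a serious obstacle, since the difficulty has been front-loaded into the construction behind Proposition~\ref{prop:standard eh mesh-c} and into Theorem~\ref{th:b}. The one point deserving care is that fullness and the depth estimate must be applied to the \emph{same} functor $F$: it is essential to use the refinement of Proposition~\ref{prop:standard eh mesh-c} that the isomorphism can be chosen to be a Riedtmann functor, rather than merely invoking the definition of standardness, which would only give an isomorphism $i$ that need not act as the identity on objects.

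As an alternative route, giving the ``second proof'' alluded to in the section heading, I would instead argue through the parts decomposition. For an arbitrary morphism $f$, the infinite part $\ff_{\infty}(f)=f-F(\phi)$ lies in $\rad^{\infty}$ and, by fullness of $F$, equals $F(\psi)$ for some $\psi \in k(\Ga)(X,Y)$; the depth estimate of Proposition~\ref{prop:depth of F(phi)} then forces $\psi=0$, whence $\ff_{\infty}(f)=0$ for every morphism $f$ between modules of $\Ga$. Proposition~\ref{prop:parte inf e gen st} now yields that $\Ga$ is generalized standard directly, without having to restrict attention to morphisms assumed a priori to lie in $\rad^{\infty}$.
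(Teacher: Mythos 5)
Your argument is correct and coincides essentially verbatim with the paper's own ``second proof'': fix via Proposition~\ref{prop:standard eh mesh-c} a mesh-comparison $F$ that is also an isomorphism (hence full), lift $f \in \rad^{\infty}(X,Y)$ to $\phi \in k(\Gamma)(X,Y)$, and use Proposition~\ref{prop:depth of F(phi)} to force $\phi = 0$. Your care in insisting that fullness and the depth estimate apply to the \emph{same} Riedtmann functor is exactly the point the paper handles through Proposition~\ref{prop:standard eh mesh-c} and Corollary~\ref{cor:F iso F pleno}, so nothing is missing.
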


\begin{proof}
The initial part of the argument is the same from the proof in the original paper (\cite{Liu3}, Main Theorem), which we adapt here: 
since $\Gamma$ is standard, it is mesh-comparable, and we can also assume due to standardness that each morphism between modules in $\Gamma$ is a $k$-linear combination of compositions of chosen irreducible morphisms. Suppose that $\rad^{\infty}(X_0,Y_0) \neq 0$ for a pair of modules $X_0,Y_0$ of $\Gamma$. Then there exist nonzero paths of arbitrary length of chosen irreducible morphisms from  $X_0$ to  $Y_0$.

From this point, we can get a contradiction in an easier way than in the original proof, not depending of the preparatory lemmata established there. It goes as follows. 
By Corollary~\ref{cor:upper bound}, if $m \in \mathbb{N}$ is such that  $\rad^m(X_0,Y_0) = \rad^{\infty}(X_0,Y_0)$, then  $m$ is an upper bound on the length of the nonzero paths of chosen morphisms between $X_0$ and $Y_0$. This contradicts what was said above, and so we complete the proof. 
\end{proof}

The second proof has a theoretical path even shorter than the one given above.

\begin{proof}[Second proof.]
Since $\Gamma$ is standard, by Proposition~\ref{prop:standard eh mesh-c} there exists a functor $F:k(\Gamma) \rightarrow \ind \Gamma$ which is an isomorphism and a mesh-comparison. Let $f \in \rad^{\infty}(X,Y)$, with $f \neq 0$. Since $F$ is full (Corollary~\ref{cor:F iso F pleno}), there exists a nonzero $\phi \in k(\Gamma)(X,Y)$ such that $f = F(\phi)$. Because of Proposition~\ref{prop:depth of F(phi)}, there exists $m > 0$ such that $F(\phi) \notin \rad^m(X,Y)$, a contradiction to the fact that $f \in \rad^{\infty}(X,Y)$. This shows that  $\Gamma$ is generalized standard.
\end{proof}

\section{Mesh-comparability and the problem of composing morphisms}
\label{sec:mesh-c x compositions}

Since compositions of chosen irreducible morphisms are well-behaved (Corollary~\ref{cor:composicao_bem_comportada_chosen}), we can expect mesh-comparable components to be suitable for the study of the problem of composition of irreducible morphisms. 

Indeed, a general criterion for this problem was given in \cite{CMT1}, Proposition 5.1, and we can rephrase this criterion using the terminology introduced for mesh-comparable components. We believe this new version can be more useful in practical examples. Also note that, differently from \cite{CMT1}, we do not have to impose here that the morphisms $f_i$'s below are irreducible.

\begin{teo} (compare with \cite{CMT1}, Proposition 5.1)
\label{th:composta em rad n+1}
Let $f_1:X_0 \rightarrow X_1, \cdots, f_n:X_{n-1} \rightarrow X_n$ be morphisms in a mesh-comparable Auslander-Reiten component $\Gamma$ and denote  $N = \Dp(f_1) + \ldots + \Dp(f_n)$. Then  $f_n \cdots f_2 f_1 \in \rad^{N+1}(X_0,X_n)$ if and only if $$\ff_P(f_n)\cdots \ff_P(f_2)\ff_P(f_1) = 0.$$
\end{teo}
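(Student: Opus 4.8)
The plan is to separate each $f_i$ into its principal and secondary parts, expand the composite by multilinearity, discard all summands of depth at least $N+1$, and analyze the surviving ``all-principal'' term through the $\mathbb{N}$-grading of $k(\Ga)$. I assume each $f_i$ is nonzero with finite depth $d_i \doteq \Dp(f_i)$, so that $N = d_1 + \cdots + d_n$ is finite and each principal part $\ff_P(f_i) = \ff_{d_i}(f_i)$ is defined. Two facts will be used repeatedly: by the remark on depths of parts, $\ff_P(f_i)$ has depth exactly $d_i$; and the secondary part $\ff_S(f_i) = f_i - \ff_P(f_i)$ collects only parts of index strictly larger than $d_i$, so $\ff_S(f_i) \in \rad^{d_i+1}(X_{i-1},X_i)$.

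First I would expand $f_n \cdots f_1 = \prod_{i=n}^{1}\bigl(\ff_P(f_i) + \ff_S(f_i)\bigr)$. Every summand other than $\ff_P(f_n)\cdots\ff_P(f_1)$ contains at least one secondary factor $\ff_S(f_{i_0})$; since $\rad^a \cdot \rad^b \subseteq \rad^{a+b}$ and each principal factor has depth exactly $d_i$, such a summand lies in $\rad^{(d_{i_0}+1)+\sum_{i\neq i_0} d_i}(X_0,X_n) = \rad^{N+1}(X_0,X_n)$. Hence $f_n \cdots f_1 \equiv \ff_P(f_n)\cdots\ff_P(f_1) \pmod{\rad^{N+1}}$, and the theorem reduces to the statement that $\ff_P(f_n)\cdots\ff_P(f_1) \in \rad^{N+1}$ if and only if $\ff_P(f_n)\cdots\ff_P(f_1) = 0$.

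To settle this equivalence I would pass to the mesh category. By Proposition~\ref{prop:decomposicao}, each $\ff_P(f_i) = F(\phi_i)$ for a homogeneous element $\phi_i \in k(\Ga)(X_{i-1},X_i)$ of degree $d_i$ (a linear combination of classes of paths of length $d_i$, or a scalar multiple of a trivial path when $d_i=0$), so that $\ff_P(f_n)\cdots\ff_P(f_1) = F(\phi_n\cdots\phi_1)$ with $\phi_n\cdots\phi_1$ homogeneous of degree $N$. If $\phi_n\cdots\phi_1 \neq 0$, then Proposition~\ref{prop:depth of F(phi)} applied with $\min = \max = N$ gives $F(\phi_n\cdots\phi_1) \notin \rad^{N+1}$; and if $\phi_n\cdots\phi_1 = 0$, then $\ff_P(f_n)\cdots\ff_P(f_1) = 0 \in \rad^{N+1}$ trivially. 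Since $F$ is faithful (Proposition~\ref{prop:fiel e denso}), the condition $\phi_n\cdots\phi_1 = 0$ is the same as $\ff_P(f_n)\cdots\ff_P(f_1)=0$, which closes both directions.

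The step I expect to be the main obstacle is the expansion: one must be certain that \emph{every} mixed summand genuinely lands in $\rad^{N+1}$, and this hinges on having the sharp equality $\Dp(\ff_P(f_i)) = d_i$, not merely $\Dp(\ff_P(f_i)) \geq d_i$, together with the superadditivity $\rad^a \cdot \rad^b \subseteq \rad^{a+b}$ of depth under composition. Once the composite has been reduced modulo $\rad^{N+1}$ to the single homogeneous term $F(\phi_n\cdots\phi_1)$, the $\mathbb{N}$-grading of $k(\Ga)$ forces ``belonging to $\rad^{N+1}$'' and ``vanishing'' to coincide, so no further subtlety remains.
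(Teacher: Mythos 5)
Your proof is correct and follows essentially the same route as the paper's: decompose each $f_i$ as $\ff_P(f_i)+\ff_S(f_i)$, expand the composite, observe that every mixed summand lies in $\rad^{N+1}(X_0,X_n)$, and conclude that the all-principal term is either zero or outside $\rad^{N+1}(X_0,X_n)$ by Proposition~\ref{prop:depth of F(phi)}. The only difference is that you spell out the last dichotomy explicitly through the $\mathbb{N}$-grading of $k(\Gamma)$ (which the paper leaves as a citation of Theorem~\ref{th:b} or Proposition~\ref{prop:depth of F(phi)}), and note that your closing remark slightly misattributes the delicate point: the mixed summands only need the containments $\ff_P(f_i)\in\rad^{d_i}$ and $\ff_S(f_i)\in\rad^{d_i+1}$, not the sharp equality $\Dp(\ff_P(f_i))=d_i$, which matters only for the all-principal term.
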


\begin{proof}
The proof can be implicitly taken from \cite{CMT1}, Prop. 5.1, but using our notation here, we shall repeat the argument there in a simplified way. \\
Since, for each $i$, $f_i = \ff_P(f_i) + \ff_S(f_i)$, we have that 

    $$f_r \ldots f_1  = \sum_{J_1 \in \{P,S\}} \ldots \sum_{J_r \in \{P,S\}} \ff_{J_r}(f_r) \ldots \ff_{J_1}(f_1).$$

If, in any term of the above sum, we have that $J_i$ equals $S$, then $\ff_{J_r}(f_r) \ldots \ff_{J_1}(f_1) $ belongs to $ \rad^{N+1}(X_0,X_n)$. On the other hand, as a consequence of Theorem~\ref{th:b} or Proposition~\ref{prop:depth of F(phi)}, $\ff_P(f_r) \ldots \ff_P(f_1)$ is either zero or does not belong to  $\rad^{N+1}(X_0,X_n)$. The result follows.
\end{proof}

In particular, the above result exchanges the decision on whether the composition of morphisms belongs to a higher power of the radical to a decision on whether the composition of their principal parts is zero, and this latter is usually a simpler task than the former, as the next example shows:

\begin{ex}
Let us return to the Example~\ref{ex:exemplo 0 parte 2}. There, we have calculated the principal parts of the irreducible morphisms $g$ and $f'$. Since we have that  $\ff_P(g)\ff_P(f') = gf = 0$, we can conclude, from Theorem~\ref{th:composta em rad n+1}, that $gf' \in \rad^3$. 
Indeed, the same example is carried in \cite{CCT1}, and there the authors conclude that $gf' \in \rad^3$ by a naive argument. 
\end{ex}

Our next result is an extension of Theorem 5.1.3 from \cite{CCsurvey} to mesh-comparable components. As also explained in \cite{CCsurvey}, it will be an application of a number of other results, namely Theorem 2.7 and Corollary 2.9 from \cite{CT}, Proposition 5.1 from \cite{CMT1} and Proposition 3 from \cite{CMT2}. 

\begin{teo}
\label{th:inf ou atalhos}

Let $\Ga$ be a mesh-comparable component of the Auslander-Reiten quiver of an algebra $A$.
Given indecomposable modules $X_0,X_1,\ldots,X_n$ in $\Ga$, the following are equivalent:

\begin{enumerate}
    \item There is a path of irreducible morphisms $X_0 \xrightarrow{h_1} X_1 \xrightarrow{h_2} \ldots \xrightarrow{h_n} X_n$ such that $h_n \ldots h_1$ is non-zero and belongs to $\rad^{n+1}(X_0,X_n)$.

    \item There is a path of irreducible morphisms $X_0 \xrightarrow{f_1} X_1 \xrightarrow{f_2} \ldots \xrightarrow{f_n} X_n$ such that $f_n \ldots f_1 =0$, each $f_i$ being a linear combination of chosen irreducible morphisms, and there are morphisms $X_0 \xrightarrow{\epsilon_1} X_1 \xrightarrow{\epsilon_2} \ldots \xrightarrow{\epsilon_n} X_n$ with $\epsilon_n \ldots \epsilon_1 \neq 0$ and satisfying that, for every $1 \leq i \leq n$, either $\epsilon_i = f_i$ or $\epsilon_i \in \rad^2(X_{i-1},X_i)$ is a linear combination of compositions of at least two chosen irreducible morphisms.

    \item There is a path of irreducible morphisms $X_0 \xrightarrow{f_1} X_1 \xrightarrow{f_2} \ldots \xrightarrow{f_n} X_n$ such that $f_n \ldots f_1 =0$, each $f_i$ being a linear combination of chosen irreducible morphisms, and such that one of the following holds:
    \begin{enumerate}
        
        \item  There is a path of irreducible morphisms $X_0 \xrightarrow{h_1} X_1 \xrightarrow{h_2} \ldots \xrightarrow{h_n} X_n$ such that $h_n \ldots h_1 \in \rad^{\infty}(X_0,X_n)\setminus\{0\}$.
    
        \item There are indices $1 \leq i_1 < \ldots < i_l \leq n$ such that for every $1 \leq j \leq l$, there is a linear combination of compositions of at least two chosen irreducible morphisms $\phi_{i_j}:X_{i_j-1} \rightarrow X_{i_j}$ such that

    $$f_n \ldots f_{i_l+1} \phi_{i_l} f_{i_l-1} \ldots f_{i_1+1} \phi_{i_1} f_{i_1-1} \ldots f_1 \neq 0.$$
    \end{enumerate}
\end{enumerate}

\end{teo}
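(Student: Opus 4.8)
The plan is to prove the equivalence by treating the finite alternative (3b) and the infinite alternative (3a) separately: I would establish $(1)\Leftrightarrow(3)$ by a direct computation with the parts decomposition of Proposition~\ref{prop:decomposicao} and Theorem~\ref{th:composta em rad n+1}, observe that condition (2) is merely a restatement of (3b), and only then invoke the external machinery to bridge (3a) with (2). Throughout I would use that, for indecomposables, a morphism is irreducible exactly when it lies in $\rad\setminus\rad^2$, so that adding a $\rad^2$ morphism to an irreducible one leaves it irreducible and does not change its principal part.

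For $(1)\Rightarrow(3)$, I would start from a path $X_0\xrightarrow{h_1}\cdots\xrightarrow{h_n}X_n$ with $h_n\cdots h_1\in\rad^{n+1}\setminus\{0\}$ and set $f_i=\ff_P(h_i)=\ff_1(h_i)$. Each $f_i$ has depth $1$, hence is irreducible and a linear combination of chosen morphisms, and since $\Dp(h_i)=1$ for all $i$ gives $N=n$, Theorem~\ref{th:composta em rad n+1} forces $f_n\cdots f_1=\ff_P(h_n)\cdots\ff_P(h_1)=0$. Writing $h_i=f_i+s_i+\ff_\infty(h_i)$, where $s_i=\ff_{<\infty}(h_i)-f_i$ is a combination of compositions of at least two chosen morphisms (as $\ff_0(h_i)=0$ and $\ff_1(h_i)=f_i$), I would expand the product and group terms: those with no infinite factor form a finite combination $Q$ of compositions of chosen morphisms (the all-$f$ term being $0$), while those containing some factor $\ff_\infty(h_i)$ lie in $\rad^\infty$. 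If $Q\neq0$ then at least one summand $f_n\cdots s_{i_l}\cdots s_{i_1}\cdots f_1$ is nonzero, which is exactly alternative (3b) with $\phi_{i_j}=s_{i_j}$; if $Q=0$ then $h_n\cdots h_1$ equals its nonzero infinite part, giving alternative (3a).

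For $(3)\Rightarrow(1)$, alternative (3a) is immediate since $\rad^\infty\subseteq\rad^{n+1}$. For (3b) I would perturb, putting $h_i=f_i$ for $i\notin\{i_1,\dots,i_l\}$ and $h_{i_j}=f_{i_j}+c\,\phi_{i_j}$ for a scalar $c\in k$ to be chosen. Each $h_i$ is then irreducible (a $\rad^2$ perturbation of an irreducible morphism) with $\ff_P(h_i)=f_i$, so Theorem~\ref{th:composta em rad n+1} places the composite in $\rad^{n+1}$. Expanding $h_n\cdots h_1=\sum_k c^k M_k$ as a polynomial in $c$, the top coefficient $M_l$ is precisely the nonzero (3b) term; since $k$ is infinite, some value of $c$ makes the whole composite nonzero, yielding (1). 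The equivalence $(2)\Leftrightarrow(3b)$ is then a direct reading of the definitions: the positions where $\epsilon_i\in\rad^2$ play the role of the indices $i_j$ and the corresponding $\epsilon_{i_j}$ that of $\phi_{i_j}$ (at least one such position must occur, since otherwise $\epsilon_n\cdots\epsilon_1=f_n\cdots f_1=0$).

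At this stage everything is settled except the passage from the purely infinite alternative (3a) to the finite condition (2) — equivalently, $(1)\Rightarrow(2)$ in the case where the finite part $Q$ above vanishes. This is the main obstacle, and it is exactly where the external machinery enters: a nonzero element of $\rad^\infty(X_0,X_n)$ realized as a length-$n$ composition of irreducible morphisms must, by Theorem~2.7 and Corollary~2.9 of \cite{CT} together with Proposition~3 of \cite{CMT2}, force the existence of a finite shortcut producing the morphisms $\epsilon_i$ required in (2), while Proposition~5.1 of \cite{CMT1} (our Theorem~\ref{th:composta em rad n+1}) keeps the bookkeeping of principal parts consistent. I expect this final bridge to be the delicate point, since it is the only step that the internal parts-decomposition cannot reach on its own and that genuinely relies on the structural analysis of $\rad^\infty$ carried out in the cited works.
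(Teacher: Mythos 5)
Your internal arguments are correct as far as they go, and they take a genuinely different route from the paper: the paper's entire proof consists of the observation that the proof of Theorem 5.1.3 in \cite{CCsurvey}, written for a Riedtmann functor $F\colon k(\tilde{\Gamma})\to\ind\Gamma$ on the universal covering, goes through unchanged when $k(\tilde{\Gamma})$ is replaced by $k(\Gamma)$, each $f_i$, $\epsilon_i$, $\phi_i$ being an image under $F$ of a morphism of $k(\Gamma)$. You instead rebuild most of the content internally: the derivation of $(1)\Rightarrow(3)$ by splitting $h_i=\ff_P(h_i)+\ff_S(\ff_{<\infty}(h_i))+\ff_{\infty}(h_i)$ and invoking Theorem~\ref{th:composta em rad n+1}, the identification of (2) with the common part plus (3b), and the perturbation argument (a nonzero polynomial in $c$ over an infinite field) for $(3b)\Rightarrow(1)$ are all sound and are more self-contained than the paper's citation.

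The genuine gap is the one step you postpone: passing from the purely infinite alternative (3a) to condition (2), equivalently completing $(1)\Rightarrow(2)$ when every finite mixed term vanishes. This is not a bookkeeping step that the cited results can be assumed to ``force''; it is the mathematical core of the theorem, and your proposal only asserts it. To see how strong it is, consider a mesh-comparable component with length, e.g.\ a regular component $\mathbb{Z}\mathbb{A}_{\infty}$ of a wild hereditary algebra (mesh-comparable by Proposition~\ref{prop:easy exs mesh-c}): between consecutive vertices of a mesh all paths have length $1$, so the only admissible $\epsilon_i$ in (2) are $f_i$ or $0$, and condition (2) can never hold along a mesh $\tau Z\to M\to Z$. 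The claimed equivalence therefore asserts that in such components \emph{no} composite of irreducible morphisms $h_2h_1$ along a mesh is nonzero --- by your own Case-2 analysis, that nonzero purely-$\rad^{\infty}$ composites of irreducible morphisms simply do not occur there. That is a substantive statement about the infinite radical of $\md A$, not about the graded category $k(\Gamma)$, and nothing in Propositions~\ref{prop:decomposicao}, \ref{prop:depth of F(phi)} or Theorem~\ref{th:composta em rad n+1} can reach it (as you acknowledge). Invoking \cite{CT} and \cite{CMT2} by name does not close it either: whether those results, proved in their own settings, yield finite shortcuts from an infinite composite in an arbitrary mesh-comparable component is precisely what must be verified, and it is exactly the work that the paper imports wholesale from the proof of \cite{CCsurvey}, Theorem 5.1.3. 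Until that bridge is actually carried out, your proof establishes $(1)\Leftrightarrow(3)$ and $(2)\Rightarrow(1)$, but not the full theorem.
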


\begin{proof}

The result follows from the proof made in \cite{CCsurvey}, Theorem 5.1.3. There we considered a Riedtmann functor $F:k(\tilde{\Gamma})\rightarrow \ind \Gamma$, where $\tilde{\Gamma}$ is the universal covering of $\Gamma$, but the same argument is valid if we replace this functor by a mesh-comparison $F:k(\Gamma) \rightarrow \ind \Gamma$. By the construction there, each $f_i$, $\epsilon_i$ or $\phi_i$ is the image via $F$ of a morphism in $k(\Gamma)$.
\end{proof}

\section{Standardness x mesh-comparability: the key result}
\label{sec:standard x mesh-c}

We have already shown with Proposition~\ref{prop:standard eh mesh-c} that every standard component is mesh-comparable. Our aim in this section is to give a key result which relates the concepts of standard, generalized standard and mesh-comparable components, that will show in particular that there are mesh-comparable components which are not standard.

\begin{lem}
\label{lem:gen st + mesh-comp}
    Let $\Gamma$ be a component of $\Gamma(\md A)$ and let $F:k(\Gamma) \rightarrow \ind \Gamma$ be a Riedtmann functor. Then the following functor induced by  $F$ (that is, the composition of $F$ with the projection $\ind \Gamma \rightarrow \ind \Gamma / \rad^{\infty}$)

    $$\overline{F}: k(\Gamma) \rightarrow \ind \Gamma / \rad^{\infty}$$
is an equivalence of categories. 
\end{lem}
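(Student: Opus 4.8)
The plan is to verify the three defining properties of an equivalence of categories for the $k$-linear functor $\overline{F}$: density, faithfulness, and fullness. Density is immediate, since both $F$ and the canonical projection $\ind \Gamma \to \ind \Gamma / \rad^{\infty}$ act as the identity on objects (quotienting a category by an ideal leaves its objects unchanged). Thus $\overline{F}$ is the identity on objects, hence dense, and its $k$-linearity is clear as it is a composition of two $k$-linear functors.

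For faithfulness, I would refine the argument already used in Proposition~\ref{prop:fiel e denso}. Suppose $\phi \in k(\Gamma)(X,Y)$ satisfies $\overline{F}(\phi) = 0$, which means precisely that $F(\phi) \in \rad^{\infty}(X,Y)$. If $\phi$ were nonzero, then Proposition~\ref{prop:depth of F(phi)} would furnish an integer $m$ with $F(\phi) \notin \rad^{m}(X,Y)$; but $\rad^{\infty}(X,Y) \subseteq \rad^{m}(X,Y)$, contradicting $F(\phi) \in \rad^{\infty}(X,Y)$. Hence $\phi = 0$, and $\overline{F}$ is faithful.

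For fullness, the key input is the approximation of an arbitrary morphism, modulo $\rad^{\infty}$, by an image under $F$. Given a morphism in $(\ind \Gamma / \rad^{\infty})(X,Y)$, I would lift it to some $f \in \Hom(X,Y)$. The construction underlying Lemma~\ref{lem:almost infty}, together with Proposition~\ref{prop:decomposicao}, yields a $\phi \in k(\Gamma)(X,Y)$ with $f - F(\phi) \in \rad^{\infty}(X,Y)$. Passing to the quotient then gives $\overline{f} = \overline{F(\phi)} = \overline{F}(\phi)$, so the chosen lift is in the image of $\overline{F}$ modulo $\rad^{\infty}$, establishing fullness.

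Having shown that $\overline{F}$ is dense, faithful, and full, I conclude that it is an equivalence of categories. The only genuinely delicate point I expect is fullness: one must ensure that the $\rad^{\infty}$-approximation applies to \emph{arbitrary} morphisms and not only to irreducible ones. I would address this either by remarking explicitly that the proof of Lemma~\ref{lem:almost infty} uses only the surjectivity of the maps induced by $F$ in Theorem~\ref{th:b} (and never the irreducibility of $f$), or by invoking Proposition~\ref{prop:decomposicao} directly, whose statement is already formulated for a general morphism $f : X \to Y$.
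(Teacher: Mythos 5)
Your proof is correct and follows essentially the same route as the paper: faithfulness and density are obtained by the argument of Proposition~\ref{prop:fiel e denso} (via Proposition~\ref{prop:depth of F(phi)}), and fullness by the $\rad^{\infty}$-approximation of Lemma~\ref{lem:almost infty}. Your closing caution is well placed: the paper itself applies Lemma~\ref{lem:almost infty} to an \emph{arbitrary} morphism even though that lemma is stated for irreducible ones, and your observation that its proof only uses the surjectivity of the maps in Theorem~\ref{th:b} (or, alternatively, invoking Proposition~\ref{prop:decomposicao} directly) is exactly what justifies this step.
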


\begin{proof}
     Observe that the same proof made to show that $F$ is faithful and dense in Proposition~\ref{prop:fiel e denso} also works to show that  $\overline{F}$ is faithful and dense. It remains to show that $\overline{F}$ is full.

    Let $f: X \rightarrow Y$ be a morphism between modules of  $\Gamma$. By  Lemma~\ref{lem:almost infty}, there exists a morphism  $\phi \in k(\Gamma)(X,Y)$ such that $f - F(\phi) \in \rad^{\infty}(X,Y)$, that means, $\overline{F}(\phi) = f + \rad^{\infty}(X,Y)$, and so $\overline{F}$ is full.
\end{proof}

\begin{teo}
\label{th:gen st + mesh-comp}
   Any component of the Auslander-Reiten quiver which is both generalized standard and mesh-comparable is also standard.
\end{teo}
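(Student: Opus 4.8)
The plan is to reduce the claim entirely to the machinery already assembled, chiefly Lemma~\ref{lem:gen st + mesh-comp} together with the characterization of isomorphisms among Riedtmann functors given in Corollary~\ref{cor:F iso F pleno}. First I would fix, using mesh-comparability, a Riedtmann functor $F: k(\Gamma) \rightarrow \ind \Gamma$. The decisive observation is that generalized standardness means $\rad^{\infty}(X,Y) = 0$ for every pair of modules $X,Y$ of $\Gamma$, so the ideal by which we quotient in Lemma~\ref{lem:gen st + mesh-comp} is the zero ideal. Consequently the quotient category $\ind \Gamma / \rad^{\infty}$ coincides with $\ind \Gamma$ and the canonical projection $\ind \Gamma \rightarrow \ind \Gamma / \rad^{\infty}$ is the identity functor; under this identification the induced functor $\overline{F}$ is literally equal to $F$.

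Next I would invoke Lemma~\ref{lem:gen st + mesh-comp}, which tells us that $\overline{F}$ is an equivalence of categories. Since $\overline{F} = F$ under the identification above, the functor $F$ is in particular full. Applying the implication (c)$\Rightarrow$(a) of Corollary~\ref{cor:F iso F pleno}, a full Riedtmann functor is an isomorphism of categories, and hence $\Gamma$ is standard, as required.

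I do not anticipate a serious obstacle, since the substantive work has already been carried out in Lemma~\ref{lem:gen st + mesh-comp}, whose fullness assertion rests on the approximation supplied by Lemma~\ref{lem:almost infty}. The only point that demands care is the distinction between an equivalence and a genuine isomorphism of categories: an equivalence need not be bijective on objects, but a Riedtmann functor is by definition the identity on objects, so fullness combined with faithfulness and density (Proposition~\ref{prop:fiel e denso}) upgrades the equivalence to an honest isomorphism. This upgrade is exactly what Corollary~\ref{cor:F iso F pleno} packages, which is why routing the argument through that corollary—rather than merely asserting that $F$ is an equivalence—is the step that actually delivers standardness.
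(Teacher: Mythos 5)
Your proof is correct and follows essentially the same route as the paper: the paper's own proof likewise observes that generalized standardness makes $\rad^{\infty}$ vanish, so the quotient in Lemma~\ref{lem:gen st + mesh-comp} is $\ind \Gamma$ itself, yielding $k(\Gamma) \cong \ind \Gamma$. Your additional step of passing through Corollary~\ref{cor:F iso F pleno} to upgrade the equivalence to a genuine isomorphism of categories is a worthwhile explicit justification of a point the paper's terse proof leaves implicit, but it is a refinement of the same argument, not a different one.
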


\begin{proof}
    Because $\rad^{\infty} = 0$ in $\Gamma$, we get from Lemma~\ref{lem:gen st + mesh-comp} that 
    $$k(\Gamma) \cong \ind \Gamma / \rad^{\infty} = \ind \Gamma$$
\end{proof}

\begin{obs}
    One interpretation we can give of Theorem~\ref{th:gen st + mesh-comp} goes as follows: since the category $k(\Gamma)$ is in general more well-behaved than the category $\ind \Ga$, Lemma~\ref{lem:gen st + mesh-comp} suggests that mesh-comparability is associated to a certain "organization" of the morphisms between modules of $\Gamma$. So we can say that, for a component $\Gamma$ to be standard, it needs to have two characteristics: the first is that $\Gamma$ is mesh-comparable and so has this organization on the morphisms, the other being that the infinite radical $\rad^{\infty}$ vanishes in $\Gamma$, which means, because of Proposition~\ref{prop:parte inf e gen st}, that the morphisms do not have infinite parts. 
\end{obs}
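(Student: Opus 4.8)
The plan is to read the Remark's interpretive claim as the assertion that standardness \emph{requires}---not merely is implied by---both mesh-comparability and generalized standardness, and to establish this necessity direction. Together with Theorem~\ref{th:gen st + mesh-comp}, which supplies the converse (sufficiency), this upgrades the informal interpretation into a clean equivalence: a component $\Gamma$ is standard if and only if it is simultaneously mesh-comparable and generalized standard. My target is therefore the implication
$$\Gamma \text{ standard} \implies \Gamma \text{ is mesh-comparable and } \rad^{\infty} = 0 \text{ on } \Gamma.$$

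For the first required property, I would simply invoke Proposition~\ref{prop:standard eh mesh-c}: from an isomorphism of categories $i \colon k(\Gamma) \xrightarrow{\sim} \ind\Gamma$ one precomposes with a suitable automorphism $G$ of $k(\Gamma)$ so that $i \circ G$ is the identity on objects, hence a Riedtmann functor. Thus a standard component is automatically mesh-comparable, and this half is purely formal.

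For the second property, I would appeal to Liu's theorem as re-proven in Section~\ref{sec:new proof Liu} (originally \cite{Liu3}): every standard component satisfies $\rad^{\infty}(X,Y) = 0$ for all $X,Y \in \Gamma$, i.e.\ is generalized standard. By Proposition~\ref{prop:parte inf e gen st}, this is exactly the statement that, with respect to the mesh-comparison produced in the previous step, every morphism between modules of $\Gamma$ has vanishing infinite part $\ff_{\infty}$. This is precisely the second ``characteristic'' named in the Remark, so both properties hold for any standard $\Gamma$.

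The only substantive ingredient is the implication \emph{standard $\Rightarrow$ generalized standard}, which is genuinely deep; the rest of the argument is bookkeeping. Since that implication is already available (in fact in two proofs) in Section~\ref{sec:new proof Liu}, the main obstacle is not a computation but the correct packaging: one must observe that the \emph{same} standard component can be equipped with a mesh-comparison and that, relative to it, the vanishing of $\rad^{\infty}$ translates via Proposition~\ref{prop:parte inf e gen st} into the vanishing of all infinite parts. Combining the necessity established here with the sufficiency of Theorem~\ref{th:gen st + mesh-comp} makes the Remark's interpretation precise as an equivalence.
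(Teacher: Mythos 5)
Your proposal is correct and assembles exactly the ingredients the paper itself relies on: Proposition~\ref{prop:standard eh mesh-c} for standard $\Rightarrow$ mesh-comparable, Liu's theorem (Section~\ref{sec:new proof Liu}) for standard $\Rightarrow$ generalized standard, and Theorem~\ref{th:gen st + mesh-comp} with Proposition~\ref{prop:parte inf e gen st} for the converse and the translation into vanishing infinite parts. Your packaging of the Remark as the unconditional equivalence ``standard $\iff$ mesh-comparable and generalized standard'' is just a mild rearrangement of the corollary the paper states immediately afterwards (which fixes mesh-comparability as a hypothesis and lists standard, generalized standard, and $\ff_{\infty}(f)=0$ as equivalent), so the approach is essentially the same.
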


\begin{obs}
    Theorem~\ref{th:gen st + mesh-comp} helps to answer the problem raised by Skowro{\'n}ski in \cite{Sko2}: to describe the generalized standard components which are not standard. Our result states that the generalized standard components which are not standard are exactly the ones which are not mesh-comparable. 
    It would still be desirable to have a structural description of the mesh-comparable components, similar to what is done to regular components by means of Happel-Preisel-Ringel and Zhang's theorem (\cite{HPR} and \cite{Zhang}) or to semiregular components as studied by S. Liu (\cite{Liu2}).
\end{obs}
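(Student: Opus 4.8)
The plan is to prove the characterization asserted in this Remark as a biconditional restricted to the class of generalized standard components. Writing it contrapositively, for a generalized standard component $\Gamma$ the claim ``$\Gamma$ is not standard if and only if $\Gamma$ is not mesh-comparable'' is equivalent to ``$\Gamma$ is standard if and only if $\Gamma$ is mesh-comparable''. I would establish this latter biconditional by assembling its two implications separately, each of which is already available among the results proved above, and then recombine them into the ``exactly'' statement. The key structural observation guiding the proof is that the two implications are not symmetric: one holds unconditionally, while the other genuinely consumes the generalized standard hypothesis.

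For the first implication, standard $\Rightarrow$ mesh-comparable, I would simply invoke Proposition~\ref{prop:standard eh mesh-c}, which produces an isomorphism $k(\Gamma)\to\ind\Gamma$ that is also a Riedtmann functor, hence a mesh-comparison, for an arbitrary standard component and in particular for a generalized standard one. Taking the contrapositive, this says that if $\Gamma$ is not mesh-comparable then $\Gamma$ cannot be standard. I would emphasize that this half uses no hypothesis on $\rad^{\infty}$ whatsoever, so it contributes one of the two directions of the equivalence for free.

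For the converse implication, mesh-comparable $\Rightarrow$ standard, I would invoke Theorem~\ref{th:gen st + mesh-comp}, which is precisely the assertion that a component that is simultaneously generalized standard and mesh-comparable must be standard. Here the generalized standard hypothesis is indispensable, and it coincides exactly with the standing assumption of the characterization being proved. Passing to the contrapositive, within the generalized standard class: if $\Gamma$ is not standard, then $\Gamma$ is not mesh-comparable. Combining this with the contrapositive obtained in the previous paragraph yields, for generalized standard $\Gamma$, that $\Gamma$ fails to be standard exactly when it fails to be mesh-comparable, which is the claim of the Remark.

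The main point to watch — more a matter of correct logical bookkeeping than a genuine obstacle — is precisely the asymmetry just noted: the implication standard $\Rightarrow$ mesh-comparable is unconditional, whereas its converse truly requires generalized standardness. Without that hypothesis the equivalence collapses, since the broader goal of this section is exactly to exhibit mesh-comparable components that are \emph{not} standard, the failure being caused by $\rad^{\infty}\neq 0$ (equivalently, by the presence of nonzero infinite parts, cf. Proposition~\ref{prop:parte inf e gen st}). Thus I would conclude by stressing that the ``exactly'' in the statement is an equivalence only after restricting attention to the generalized standard world, at which point the proof is complete once Proposition~\ref{prop:standard eh mesh-c} and Theorem~\ref{th:gen st + mesh-comp} are placed side by side.
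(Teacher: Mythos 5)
Your proposal is correct and matches the paper's implicit argument exactly: the remark is precisely the combination of Proposition~\ref{prop:standard eh mesh-c} (standard $\Rightarrow$ mesh-comparable, unconditionally) with Theorem~\ref{th:gen st + mesh-comp}, whose contrapositive the paper itself records as Corollary~\ref{cor:gen st n st is not mesh-c}. Your observation about the asymmetry of the two implications --- one free, one consuming the generalized standard hypothesis --- is also the correct reading of why the equivalence holds only within the generalized standard class.
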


\begin{obs}
    Because all generalized standard stable tubes are standard (see, e.g., \cite{SiSk}), they are also mesh-comparable. However, the existence of mesh-comparability on non-standard tubes is still an open problem.
\end{obs}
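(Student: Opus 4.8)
The plan is to obtain mesh-comparability of generalized standard stable tubes as a direct consequence of the already-established fact that standard components are mesh-comparable, with the only substantive input being an external classification result. First I would recall the relevant objects: a \emph{stable tube} is an Auslander-Reiten component of the form $\mathbb{Z}\mathbb{A}_{\infty}/\langle \tau^{r}\rangle$ for some rank $r \geq 1$, and \emph{generalized standardness} of such a component means $\rad^{\infty}(X,Y)=0$ for every pair of modules $X,Y$ in it. The key ingredient, which I would cite from \cite{SiSk}, is the theorem that every generalized standard stable tube is in fact standard, i.e. the canonical functor $k(\Gamma)\to \ind \Gamma$ is an isomorphism of categories.

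With standardness granted, the conclusion is immediate. I would invoke Proposition~\ref{prop:standard eh mesh-c}, which states that any standard component admits a functor $F\colon k(\Gamma)\xrightarrow{\sim}\ind \Gamma$ that is simultaneously an isomorphism and a Riedtmann functor, hence a mesh-comparison. Since a generalized standard stable tube is standard by \cite{SiSk}, it is mesh-comparable via such an $F$. This settles the first assertion of the remark in a single step once the cited result is in hand.

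It is worth recording why the argument cannot be turned around to treat the second, open, assertion, and why no purely internal route is available. The paper's own Theorem~\ref{th:gen st + mesh-comp} proceeds from (generalized standard $+$ mesh-comparable) to standard; it \emph{presupposes} mesh-comparability and so cannot be used to establish it. Likewise Proposition~\ref{prop:easy exs mesh-c} does not apply to tubes: a stable tube does not \emph{have length} in the sense of part~(a) — already in a homogeneous tube the trivial path and the path $M_{1}\to M_{2}\to M_{1}$ join the same vertex with different lengths, owing to the cyclic identification $\tau^{r}$ — and it is not of the tree-shaped form $\mathbb{Z}\Delta$ of part~(b), being a proper quotient of such a quiver. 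Thus the genuine content of the statement rests entirely on the standardness theorem of \cite{SiSk}, and for a \emph{non-standard} stable tube none of these routes is available, which is exactly why deciding mesh-comparability there remains open. The main (indeed, the only) obstacle is therefore the external classification fact; once it is cited, the remainder is a one-line application of Proposition~\ref{prop:standard eh mesh-c}.
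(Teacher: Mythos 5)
Your argument is exactly the paper's: the remark rests on citing the theorem from \cite{SiSk} that generalized standard stable tubes are standard, and then applying Proposition~\ref{prop:standard eh mesh-c} to conclude mesh-comparability. Your additional observations (that Theorem~\ref{th:gen st + mesh-comp} presupposes mesh-comparability and that Proposition~\ref{prop:easy exs mesh-c} fails for tubes) are correct but not needed, since the paper's route is the one-line deduction you give.
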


\begin{cor}
    The following are equivalent for a mesh-comparable component $\Gamma$: 
    \begin{enumerate}
        \item[(a)] $\Gamma$ is standard.
        \item[(b)] $\Gamma$ is generalized standard.
        \item[(c)]  For each morphism $f$ between modules from $\Gamma$ we have $\ff_{\infty}(f) = 0$.
    \end{enumerate}
\end{cor}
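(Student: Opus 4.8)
The plan is to observe that this corollary is essentially a repackaging of three results already established in the excerpt, so the proof amounts to correctly assembling the implications while keeping the standing hypothesis of mesh-comparability in view. I would organize the argument around the cycle (a) $\Rightarrow$ (b) $\Rightarrow$ (a) together with the separate equivalence (b) $\iff$ (c).

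First I would dispatch (b) $\iff$ (c), which is immediate: since $\Gamma$ is mesh-comparable by hypothesis, Proposition~\ref{prop:parte inf e gen st} states precisely that $\Gamma$ is generalized standard if and only if $\ff_{\infty}(f) = 0$ for every morphism $f$ between modules of $\Gamma$. No further work is needed here.

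Next I would handle (a) $\Rightarrow$ (b). Standardness of $\Gamma$ means $\rad^{\infty}(X,Y) = 0$ for all $X,Y$ by the theorem of S.~Liu reproved in Section~\ref{sec:new proof Liu}, which says exactly that every standard component is generalized standard. (Here standardness alone already suffices, so the mesh-comparability hypothesis is not even needed for this direction.) Conversely, for (b) $\Rightarrow$ (a), this is where the standing assumption does the work: since $\Gamma$ is assumed mesh-comparable and, under (b), also generalized standard, Theorem~\ref{th:gen st + mesh-comp} gives directly that $\Gamma$ is standard. Combining these two implications closes the loop between (a) and (b), and together with the already-noted equivalence (b) $\iff$ (c) this establishes the equivalence of all three statements.

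I do not expect a genuine obstacle in this proof; the only point requiring care is bookkeeping. Specifically, one must not forget that mesh-comparability is a blanket hypothesis of the corollary (not one of the three listed conditions), since it is precisely this hypothesis that licenses the application of both Proposition~\ref{prop:parte inf e gen st} and Theorem~\ref{th:gen st + mesh-comp}. Without it, the implication (b) $\Rightarrow$ (a) would fail in general, and indeed the remarks following Theorem~\ref{th:gen st + mesh-comp} emphasize that the generalized standard components which are not standard are exactly those failing to be mesh-comparable.
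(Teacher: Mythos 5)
Your proposal is correct and follows essentially the same route as the paper, which likewise deduces (b)~$\iff$~(c) from Proposition~\ref{prop:parte inf e gen st} and (b)~$\Rightarrow$~(a) from Theorem~\ref{th:gen st + mesh-comp}. Your only addition is to make explicit that (a)~$\Rightarrow$~(b) rests on Liu's theorem (standard implies generalized standard, reproved in Section~\ref{sec:new proof Liu}), a dependence the paper's one-line proof leaves implicit.
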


\begin{proof}
    It follows from Proposition~\ref{prop:parte inf e gen st} and Theorem~\ref{th:gen st + mesh-comp}.
\end{proof}

The next result also follows immediately from Theorem~\ref{th:gen st + mesh-comp}:

\begin{cor}
\label{cor:gen st n st is not mesh-c}
    If a component $\Gamma$ is generalized standard but not standard, then it is not mesh-comparable.
\end{cor}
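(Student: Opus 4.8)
The plan is to observe that this corollary is precisely the contrapositive of Theorem~\ref{th:gen st + mesh-comp}, so the entire content has already been established and only a short logical rearrangement remains. Recall that Theorem~\ref{th:gen st + mesh-comp} asserts that any component which is simultaneously generalized standard and mesh-comparable must be standard.

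The argument I would give is a proof by contradiction. Suppose $\Gamma$ is generalized standard but not standard, and suppose toward a contradiction that $\Gamma$ were nevertheless mesh-comparable. Then $\Gamma$ would satisfy both hypotheses of Theorem~\ref{th:gen st + mesh-comp}, namely being generalized standard and being mesh-comparable. Applying that theorem directly would force $\Gamma$ to be standard, contradicting the standing assumption that $\Gamma$ is not standard. Hence no mesh-comparison $F \colon k(\Gamma) \rightarrow \ind \Gamma$ can exist, i.e., $\Gamma$ is not mesh-comparable.

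There is no genuine obstacle to overcome here: the only thing one must take care of is to cite Theorem~\ref{th:gen st + mesh-comp} in the correct logical direction. The substantive work was already carried out in Lemma~\ref{lem:gen st + mesh-comp}, where the induced functor $\overline{F} \colon k(\Gamma) \rightarrow \ind \Gamma / \rad^{\infty}$ was shown to be an equivalence, and in Theorem~\ref{th:gen st + mesh-comp} itself, which combines that equivalence with the vanishing $\rad^{\infty} = 0$ coming from generalized standardness to produce the isomorphism $k(\Gamma) \cong \ind \Gamma$ witnessing standardness. The corollary merely reads that implication in its contrapositive form, and thus requires no new machinery.
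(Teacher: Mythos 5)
Your proof is correct and follows exactly the paper's approach: the paper states the corollary as an immediate consequence of Theorem~\ref{th:gen st + mesh-comp}, and your contrapositive/contradiction argument is precisely that logical rearrangement. Nothing further is needed.
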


Our next example builds on Corollary~\ref{cor:gen st n st is not mesh-c} to exhibit a component which is not mesh-comparable.

\begin{ex}[\cite{BG}, `Example 14 bis']  \label{ex:14bis}
    Let $A$ be the path $k$-algebra given by the quiver

    \begin{displaymath}
        \xymatrix{\bullet \ar@<0.5ex>[r]^{\delta} & \bullet \ar@<0.5ex>[l]^{\sigma} \ar@(ur,dr)[]^{\rho}}
    \end{displaymath}

bound by the relations  $\delta \sigma = \rho^2, \rho^4 = 0, \sigma \delta  = \sigma \rho \delta$. Then we know by the techniques from \cite{BG} that $A$ is representation-finite, so the single component $\Gamma$ of the Auslander-Reiten quiver of $A$ is generalized standard (by Auslander's theorem), and if the characteristic of the field $k$ is 2, that $\Gamma$ is not standard. Therefore, Corollary~\ref{cor:gen st n st is not mesh-c} implies that $\Gamma$ is not mesh-comparable.

Note that, by \cite{BGRS}, all finite Auslander-Reiten components are standard if $k$ has characteristic different from 2, so we can only expect to find examples of finite non-mesh-comparable components when the characteristic of the base field is 2.
\end{ex}

In the next example, we show that there are mesh-comparable components which are not generalized standard, and thus not standard. 

\begin{ex}
   Let $H$ be a wild hereditary algebra and let $\Gamma$ be a regular component of its Auslander-Reiten quiver. Then $\Gamma$ is not generalized standard, but it has the form $\mathbb{Z} \mathbb{A}_{\infty}$ (see, e.g., the work of Kerner \cite{Ker} for a reference on this). However, from Proposition~\ref{prop:easy exs mesh-c} above we know that $\Gamma$ is mesh-comparable.
\end{ex}

\section{Mesh-comparability and degrees}
\label{sec:mesh-c x degrees}

The notion of degree of an irreducible morphism was introduced by S. Liu in \cite{Liu1} as a tool to study compositions of irreducible morphisms and combinatorial aspects of the Auslander-Reiten quivers.

\begin{defi} We say that the {\bf left degree} of an irreducible morphism $f : X \rightarrow Y$
is $n$, and we denote $d_l(f) = n$, if $n$ is the smallest natural number for which there are an indecomposable module $Z$ and a
morphism $h : Z \rightarrow X$ such that $h \in \rad^n(Z,X) \setminus \rad^{n+1}(Z,X)$ and $fh \in \rad^{n+2}(Z, Y)$. In case this condition is not satisfied for any $n \geq 1$, then we say that the left degree of $f$ is infinite. The {\bf right degree} of $f$, denoted by $d_r(f)$, is defined dually.
\end{defi}

It was also shown in \cite{Liu1}, Corollary 1.7 that two irreducible morphisms $f,g: X \rightarrow Y$ representing the same arrow in the Auslander-Reiten quiver share the same left and right degrees. As a consequence, if we know the degrees of all the chosen irreducible morphisms over a mesh-comparable component, then we will immediately know the degrees of all irreducible morphisms, not only the chosen ones.

The degrees of chosen irreducible morphisms can be computed using our next theorem. Once we restrict ourselves to the context of mesh-comparable components, it is an extension of Theorem C from \cite{CMT1} and Theorem 3.3 from \cite{CSi1}.

\begin{teo}
    Let $\Gamma$ be an Auslander-Reiten component, which is mesh-comparable via $F$. Given a chosen irreducible morphism $f: X \rightarrow Y$, we have:

    \begin{enumerate}
        \item[(a)] if $d_l(f) = n < \infty$, then $\operatorname{Ker} f \text{ belongs to } \Gamma$ and there exists a path  $X_0 = \operatorname{Ker} f \xrightarrow{g_1} X_1 \xrightarrow{g_2} \cdots \xrightarrow{g_n} X_n = X$  of chosen irreducible morphisms such that $g_n\cdots g_1 \neq 0$ and $f g_n \cdots g_1 = 0$.

        \item[(b)]  if $d_r(f) = n < \infty$, then  $\operatorname{Coker} f \text{ belongs to }\Gamma$ and there exists a path  $X_0 = Y \xrightarrow{g_1} X_1 \xrightarrow{g_2} \cdots \xrightarrow{g_n} X_n = \operatorname{Coker} f$ of chosen irreducible morphisms such that $g_n\cdots g_1 \neq 0$ and $g_n \cdots g_1 f = 0$.
    \end{enumerate}
\end{teo}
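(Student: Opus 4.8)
The plan is to establish part (a) in full and to deduce part (b) by duality: passing to the opposite algebra $A^{op}$ turns left degrees, kernels and sink morphisms into right degrees, cokernels and source morphisms, and a mesh-comparison $F \colon k(\Gamma) \to \ind \Gamma$ dualizes to a mesh-comparison for the corresponding component over $A^{op}$, so (b) is just (a) read in the opposite category. So assume throughout that $d_l(f) = n < \infty$ for the chosen morphism $f \colon X \to Y$, say $f = F(\overline{\alpha})$ for the arrow $\alpha \colon X \to Y$, so that $\Dp(f) = 1$ and $\ff_P(f) = f$.

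First I would unwind the definition of left degree to obtain an indecomposable $Z$ and a morphism $h \in \rad^n(Z,X) \setminus \rad^{n+1}(Z,X)$ with $fh \in \rad^{n+2}(Z,Y)$. Since $h$ has finite depth it is not in $\rad^{\infty}$, and as radical morphisms between modules of distinct components always lie in $\rad^{\infty}$, the module $Z$ must belong to $\Gamma$; this is what lets the decomposition machinery act on $h$. Its principal part $\ff_P(h) = \ff_n(h)$ is then nonzero, and applying Theorem~\ref{th:composta em rad n+1} to the composite $fh$ (here $N = \Dp(h)+\Dp(f) = n+1$ and $fh \in \rad^{n+2} = \rad^{N+1}$) gives $\ff_P(f)\,\ff_P(h) = f\cdot\ff_n(h) = 0$. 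This is the step where mesh-comparability does the essential work: it replaces the passage to a covering quiver used in \cite{CMT1,CSi1} by a direct computation inside $k(\Gamma)$ via Theorem~\ref{th:b} and the faithfulness of $F$ (Proposition~\ref{prop:fiel e denso}).

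Next I would extract the geometric consequences. Being irreducible between indecomposables, $f$ is either a monomorphism or an epimorphism; the relation $f\cdot\ff_n(h) = 0$ with $\ff_n(h) \neq 0$ rules out the monomorphism case, so $f$ is an epimorphism and $K \doteq \operatorname{Ker} f$ is nonzero, hence indecomposable by Liu's degree theory (\cite{Liu1}). As $f\cdot\ff_n(h) = 0$, the map $\ff_n(h)$ factors as $\ff_n(h) = \iota\,v$ through the inclusion $\iota \colon K \hookrightarrow X$, where $v \colon Z \to K$. The minimality of $d_l(f) = n$ forces $\Dp(\iota) \geq n$ (since $f\iota = 0$ would otherwise witness a smaller degree), while $n = \Dp(\ff_n(h)) = \Dp(\iota v) \geq \Dp(\iota) + \Dp(v)$ forces $\Dp(\iota) = n$ and $\Dp(v) = 0$. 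Thus $v$ is an isomorphism, so $K \cong Z$ lies in $\Gamma$, and $\ff_n(\iota) \neq 0$ is a linear combination of length-$n$ compositions of chosen morphisms from $K$ to $X$ which, by Theorem~\ref{th:composta em rad n+1} applied to $f\iota = 0$, satisfies $f\cdot\ff_n(\iota) = 0$.

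It remains to replace this linear combination by a single path, and this is the step I expect to be the main obstacle. Lifting through $F$ by faithfulness, the task becomes: given a nonzero homogeneous $\psi \in \Rr^n k(\Gamma)(K,X)$ with $\overline{\alpha}\,\psi = 0$, exhibit a single path $K = X_0 \xrightarrow{\beta_1} \cdots \xrightarrow{\beta_n} X_n = X$ with $\overline{\beta_n}\cdots\overline{\beta_1} \neq 0$ and $\overline{\alpha}\,\overline{\beta_n}\cdots\overline{\beta_1} = 0$, after which $g_i = F(\overline{\beta_i})$ finishes the proof. One cannot simply select a monomial of $\psi$, since individual paths may have nonzero compositions that cancel in the sum; instead I would analyze the relation $\overline{\alpha}\,\psi = 0$ through the mesh relations of $k(\Gamma)$, which single out a sectional path ending at $X$ whose composition is nonzero by the classical non-vanishing of sectional paths in a mesh category and which is annihilated by $\overline{\alpha}$ through the single mesh relation attached to the non-projective vertex $Y$. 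This is exactly the combinatorial core of \cite{CMT1} (Theorem C) and \cite{CSi1} (Theorem 3.3), carried out there over a covering; since $k(\Gamma)$ is itself a mesh category, the argument transports verbatim to the mesh-comparable setting.
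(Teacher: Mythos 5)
Your reduction is sound up to (and including) the statement that $K=\operatorname{Ker} f$ lies in $\Gamma$, that $\Dp(\ker f)=n$, that $\ff_n(\ker f)\neq 0$, and that $f\cdot\ff_n(\ker f)=0$; indeed this part is more self-contained than the paper, which simply quotes \cite{CMT1}, 3.4 for the depth of the kernel inclusion and then runs the same machinery (Theorem~\ref{th:composta em rad n+1}, Proposition~\ref{prop:depth of F(phi)}). The genuine gap is exactly the step you flag as ``the main obstacle'': you never produce a single path $g_n\cdots g_1\neq 0$ with $fg_n\cdots g_1=0$. Your plan is to lift the relation to $k(\Gamma)$ (which is fine, by faithfulness of $F$) and then invoke the combinatorial core of \cite{CMT1} (Theorem C) and \cite{CSi1} (Theorem 3.3), asserting that ``since $k(\Gamma)$ is itself a mesh category, the argument transports verbatim.'' That assertion is not a proof: those arguments are carried out over the universal/generic covering $\tilde{\Gamma}$ precisely because $k(\tilde{\Gamma})$ has combinatorial properties (no oriented cycles, directedness, good control of sectional paths) that $k(\Gamma)$ in general lacks; if the combinatorics transported verbatim to $\Gamma$ itself, the covering apparatus in those papers, and indeed the mesh-comparability hypothesis of this paper, would be pointless. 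So the crux of the theorem is deferred to references that do not apply as stated.

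The paper closes this step by a different and essentially elementary device which you could adopt. Pick \emph{any} nonzero monomial $p=g_n\cdots g_1=F(\overline{\alpha_n})\cdots F(\overline{\alpha_1})$ occurring in $\ff_n(\ker f)\neq 0$ (a nonzero linear combination has a nonzero term); the cancellation problem you worry about is then handled a posteriori rather than avoided. By \cite{AC}, Lemma II.2.8, there is $u\colon X\rightarrow X$ with either $p=u\circ(\ker f)$ or $\ker f=u\,p$; since both $p$ and $\ker f$ lie in $\rad^n\setminus\rad^{n+1}$ (Corollary~\ref{cor:composicao_bem_comportada_chosen} for $p$), $u$ cannot be radical, hence is an automorphism, and one may assume $\ker f=u\,p$. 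Writing $u=\ff_P(u)+\ff_S(u)=\lambda\Id_X+\ff_S(u)$ with $\lambda\neq 0$ and $\ff_S(u)$ radical, one gets $0=f(\ker f)=\lambda\,fp+f\,\ff_S(u)\,p$, where $f\,\ff_S(u)\,p\in\rad^{n+2}$ while $fp=F(\overline{\gamma}\,\overline{\alpha_n}\cdots\overline{\alpha_1})$ is either zero or of depth exactly $n+1$ by Proposition~\ref{prop:depth of F(phi)}; hence $fp=0$. This factorization-plus-depth argument is precisely the idea missing from your proposal.
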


\begin{proof}
   We shall prove only (a) since the proof of (b) is dual.  
   
   It follows from \cite{CMT1}, 3.4 that the canonical inclusion morphism  $\ker f: \Ker f \rightarrow X$ belongs to  $\rad^n(\Ker f, X)$ but not to $\rad^{n+1}(\Ker f, X)$, so $\Ker f \text{ belongs to } \Gamma$. 
   Since $f$ is a chosen morphism, we write $f = F(\overline{\gamma})$, where $\gamma: X \rightarrow Y$ is an arrow of $\Ga$. Moreover, since $\Dp(\ker f) = n$, we have  $\ff_n(\ker f) \neq 0$, and so, we can obtain a path $\Ker f \xrightarrow{\alpha_1} X_1 \xrightarrow{\alpha_2} \ldots \xrightarrow{\alpha_n} X$ in $\Ga$ such that  $F(\overline{\alpha_n}) \ldots F(\overline{\alpha_1}) \neq 0$. Then take $g_i = F(\overline{\alpha_i})$ for each $1 \leq i \leq n$. We shall prove that $fg_n \ldots g_1 = 0$.

   Observe that there exists a morphism $u: X \rightarrow X$ such that either $F(\overline{\alpha_n}) \ldots F(\overline{\alpha_1}) = u \circ (\ker f)$ or $\ker f = u F(\overline{\alpha_n}) \ldots F(\overline{\alpha_1})$ (see for instance \cite{AC}, Lemma II.2.8). Since $F(\overline{\alpha_n}) \ldots F(\overline{\alpha_1}) \neq 0$, it follows from Corollary~\ref{cor:composicao_bem_comportada_chosen} that  $F(\overline{\alpha_n}) \ldots F(\overline{\alpha_1}) \in \rad^n(\Ker f, X) \setminus \rad^{n+1}(\Ker f, X)$. We already saw that $\ker f \in \rad^n(\Ker f, X) \setminus \rad^{n+1}(\Ker f, X)$, so in both cases for  $u$, we can deduce that it is an automorphism. Changing $u$ by  $u^{-1}$ if necessary, we can assume that $\ker f = u F(\overline{\alpha_n}) \ldots F(\overline{\alpha_1})$. It then follows that 
   \begin{align*}
       0 &= f \circ (\ker f) = f u F(\overline{\alpha_n}) \ldots F(\overline{\alpha_1}) \\
       &= F(\overline{\gamma}) (\ff_P (u) + \ff_S(u)) F(\overline{\alpha_n}) \ldots F(\overline{\alpha_1})
   \end{align*}

  Hence  $F(\overline{\gamma}) \ff_P (u) F(\overline{\alpha_n}) \ldots F(\overline{\alpha_1}) = -F(\overline{\gamma}) \ff_S (u) F(\overline{\alpha_n}) \ldots F(\overline{\alpha_1})$, where what is to the left of the equality is either zero or belongs to to  $\rad^{n+1} \setminus \rad^{n+2}$,  by Proposition~\ref{prop:depth of F(phi)}, and what is to the right belongs to  $\rad^{n+2}$. It follows then that $F(\overline{\gamma}) \ff_P (u) F(\overline{\alpha_n}) \ldots F(\overline{\alpha_1}) = 0$. Since $u$ is an automorphism over $X$ indecomposable, there exists $\lambda \in k \setminus \{0\}$ such that $\ff_P(u) = \lambda.\Id_X$. Therefore,  $F(\overline{\gamma}) (\lambda.\Id_X) F(\overline{\alpha_n}) \ldots F(\overline{\alpha_1}) = \lambda.F(\overline{\gamma}) F(\overline{\alpha_n}) \ldots F(\overline{\alpha_1}) = 0$, that means, $F(\overline{\gamma}) F(\overline{\alpha_n}) \ldots F(\overline{\alpha_1}) = fg_n \ldots g_1 = 0$.
\end{proof}

\section*{Acknowledgements}
This work is part of the PhD thesis of first named author, under supervision by the second named author (\cite{ChustTese}). The authors gratefully acknowledge financial support by S\~ao Paulo Research Foundation - FAPESP (grants \#2020/13925-6 and \#2022/02403-4), and by CNPq (grant Pq 312590/2020-2).


\begin{thebibliography}{MMMM}


\bibitem{AC} 
I. Assem, F. U. Coelho, {\it Basic Representation Theory of Algebras}, Graduate Texts in Mathematics {\bf 283}, 
Springer 2020. 

\bibitem{AC2}
I. Assem, F. U. Coelho, {\it An introduction to module theory}, Oxford Graduate Texts in Mathematics {\bf 32}, 
Oxford Univ. Press, 2024, 608 pp. 

\bibitem{BGRS} R. Bautista, P. Gabriel, A. V. Roiter, and L. Salmerón, {\it Representation-finite algebras
and multiplicative bases}, Invent. Math., 81 (1985) 217–285.

\bibitem{BG} 
K. Bongartz, P. Gabriel, {\it Covering spaces in representation theory}, Invent. Math. {\bf 65} (1982), 331-378. 

\bibitem{CCT1} 
C. Chaio, F. U. Coelho, S. Trepode, {\it On the composite of two irreducible morphisms in radical cube}, J. Algebra {\bf 312} (2007), 650-667.  

\bibitem{CMT1} 
C. Chaio,  P. Le Meur, S. Trepode, {\it Degrees of irreducible morphisms and finite representation 
type}, J. London Math. Soc. {\bf 84} (2011) 35-57.

\bibitem{CMT2} 
C. Chaio,  P. Le Meur, S. Trepode, {\it Covering techniques in Auslander-Reiten theory},  J. Pure Appl. Algebra {\bf 223} (2019), 641-659. 

\bibitem{CPT}
C. Chaio, M. I. Platzeck, S. Trepode, {\it The composite of irreducible morphisms in regular components},  Colloq. Math. {\bf 123}(1) (2011),  27-47.

\bibitem{CT}  
C. Chaio, S. Trepode, {\it The composite of irreducible morphisms in standard components}, J. Algebra {\bf 323} (2010) 1000-1011.

\bibitem{ChustTese}
V. Chust, {\it Riedtmann functors and compositions of irreducible morphisms}, PhD thesis (Advisor: F. U. Coelho), University of São Paulo, Brasil, 2025. 

\bibitem{CCsurvey}
V. Chust, F. U. Coelho, {\it On Riedtmann's well-behaved functors and applications to composites of irreducible morphisms}, preprint (arXiv 2507.03121), 2025.

\bibitem{CSi1} 
F. U. Coelho, D. D. Silva, {\it Relative degrees of irreducible morphisms}, J. Algebra {\bf 428} (2015), 471-489.

\bibitem{HPR} D. Happel, U. Preiser, and C. M. Ringel, {\it Vinberg’s characterization of Dynkin diagrams using subadditive functions with application to DTr-periodic modules}. In {\it Representation Theory II}, volume 832 of Lecture Notes in Mathematics. Springer-Verlag, 1980. Proceedings of the Second International Conference on Representations of Algebras, Ottawa, 1979.

\bibitem{Ker} O. Kerner, {\it Basic results on wild hereditary algebras}. In {\it Trends in the representation
theory of finite dimensional algebras}, vol. 229 of Contemp. Math., pp. 215–225.
Amer. Math. Soc., 1998.

\bibitem{Liu1}  
S. Liu, {\it The degree of irreducible maps and the shapes of Auslander-Reiten quivers}, J. London Math. Soc. {\bf 45}(2) (1992), 32-54.

\bibitem{Liu2} 
S. Liu, {\it Semi-stable components of an Auslander-Reiten quiver}, J. London Math. Soc., {\bf 47}(2) (1993) 405–416.

\bibitem{Liu3}  
S. Liu, {\it Infinite radicals in standard Auslander-Reiten components}, J. Algebra {\bf 166} (1994), 245-254.

\bibitem{Rie}  
C. Riedtmann, {\it Algebren, darstellungsköcher, überlagerungen und züruck}, Comment. Math. Helv. 55 (1980), 199-224.

\bibitem{Rin} C. M. Ringel, {\it Tame algebras and integral quadratic forms}, Lecture Notes
in Mathematics, vol. 1099, Springer-Verlag, 1984.

\bibitem{SiSk} D. Simson and A. Skowronski, {\it Elements of the Representation Theory of Associative Algebras}, vol. 2. London Math. Soc. Student Texts {\bf 71}, Cambridge Univ. Press, 2007.

\bibitem{Sko} A. Skowro\'nski. {\it Generalized standard Auslander-Reiten components}. J. Math. Soc.
Japan, {\bf 46}(3) (1994) 517–543.

\bibitem{Sko2} A. Skowro\'nski. {\it On semi-regular Auslander-Reiten components}. Bull. Polish Acad. Sci. Math., {\bf 42}(2) (1994) 157–163.

\bibitem{Zhang}
Y. Zhang, {\it The structure of stable components}, Can. J. Math., {\bf 43}(3) (1991) 652–672.


\end{thebibliography}
\end{document}